\begin{document}
\newtheorem{definition}{Definition}[section]
\newtheorem{theorem}[definition]{Theorem}
\newtheorem{lemma}[definition]{Lemma}
\newtheorem{example}[definition]{Example}
\newtheorem{remark}[definition]{Remark}
\newtheorem{corollary}[definition]{Corollary}
\newtheorem{proposition}[definition]{Proposition}

\title{The net-regular strongly regular signed graphs with degree 6}

\author{Qian Yu\thanks{1779382456@qq.com}, Yaoping Hou\thanks{Corresponding author: yphou@hunnu.edu.cn}\\
{\small MOE-LCSM, CHP-LCOCS, School of Mathematics and Statistics}\\{\small Hunan Normal University, Changsha, Hunan 410081, P. R. China}
}

\date{}
\maketitle

\begin{abstract}
In this paper, we study the net-regular strongly regular signed graphs with degree 6 and determine all connected 6-regular and net-regular strongly regular signed graphs. There are three, six and four 6-regular strongly regular signed graphs with net-degree 4, 2 and 0, respectively.
\end{abstract}

\noindent
{\bf Keywords:}
 Signed graph, Strong regularity, Net regularity \\[2mm]
\textbf{AMS classification}:  05C22, 05E30

\section{Introduction}

Let $G$ be a simple connected graph with vertex set $V(G)$ and edge set $E(G)$. A \emph{signed graph} $\dot{G}=(G,\sigma)$ is an underlying graph $G$ together with a sign function $\sigma :E(G)\rightarrow\{+1,-1\}$. The \emph{adjacency matrix} of $\dot{G}$ is denoted by $A(\dot{G})=(a_{ij}^\sigma)$ where $a_{ij}^\sigma=\sigma(v_iv_j)$ if $v_i\sim v_j$, and 0 otherwise. And the eigenvalues of $A(\dot{G})$ are the eigenvalues of $\dot{G}$. Let $N(v)$ denote the set of neighbours of $v$ and  $N(u)\bigcap N(v)$ denote the set of common neighbours of $u$ and $v$. The \emph{degree} $d(v)$ of a vertex $v$ in $\dot{G}$ is the number of edges incident with $v$. And the \emph{positive degree} $d^{+}(v)$ and \emph{negative degree} $d^{-}(v)$ of $v$ are the number of positive and negative edges incident with $v$, respectively. We use $v_i\overset{+}{\sim}v_j$ and $v_i\overset{-}{\sim}v_j$ to denote that $v_i$ is positively and negatively adjacent to $v_j$, respectively. Let $N^+(v)$ ($N^-(v)$, respectively) denote the set of positive (negative, respectively) neighbours of $v$. A signed graph  $\dot{G}$ is said to be \emph{$r$-regular} if its every vertex has the same degree $r$ and \emph{$\rho$ net-regular} if its every vertex has the same net-degree $\rho$.   A signed graph $\dot{G}$ is called \emph{homogeneous} if all its edges have the same sign. Otherwise, it is \emph{inhomogeneous}. We use $G^+$ and  $G^-$ to denote the subgraphs of $G$ induced by the positive edges and negative edges of $\dot{G}$ in this paper. And $-\dot{G}$ denotes the \emph{negation} of $\dot{G}$.
%

The sign of a cycle or a walk in a signed graph is the product of the sign of its edges. A signed graph is \emph{balanced} if all its cycle is positive. Otherwise, it is  \emph{unbalanced}.

It is  known that a strongly regular graph with parameters $(n,r,e,f)$ is a $r$-regular graph on $n$ vertices in which any two adjacent vertices have exactly $e$ common neighbours and any two non-adjacent vertices have exactly $f$ common neighbours \cite{CRS}. And the parameters of a strongly regular graph satisfy that
\begin{equation}\label{srg}
   r(r-e-1)=(n-r-1)f.
\end{equation}

There are some different definitions on the strong regularity of signed graphs.  Zaslavsky  defined the very strongly regular signed graphs in \cite{Z}. Ramezani gave a kind of concept of signed strongly regular graphs and  constructed some families of signed strongly regular graphs with only two distinct eigenvalues by the star complement technique \cite{R}. But we focus on the following definition given by Stani\'c in  \cite{S1}.

By \cite{S1}, a signed graph $\dot{G}$ is called strongly regular if it is neither homogeneous complete nor edgeless, and there exist $r\in \mathbb{N}$, $a, b ,c\in\mathbb{Z}$, such that the entries of $A^2_{\dot{G}}$ satisfy

\begin{equation*}
  a^{(2)}_{ij}=
\left\{
\begin{aligned}
r, & \quad \text{if $v_i=v_j$}; \\
a, & \quad \text{if $v_i\overset{+}{\sim}v_j$}; \\
b, & \quad \text{if $v_i\overset{-}{\sim}v_j$}; \\
c, & \quad \text{if $v_i\neq v_j$ and $v_i\nsim v_j$}.
\end{aligned}
\right.
\end{equation*}
By the definition, a signed graph $\dot{G}$ is strongly regular if and only if $A^2_{\dot{G}}$ satisfies
\begin{equation}\label{def}
  A^2_{\dot{G}}+\frac{b-a}{2}A_{\dot{G}}=\frac{a+b}{2}A_G+cA_{\overline{G}}+rI=\frac{a+b-2c}{2}A_G+cJ+(r-c)I.
\end{equation}
where $\overline{G}$ is the complement of $G$, $I$ and $J$ are the identity matrix and the all-1 matrix, respectively.

If $\dot{G}$ is $\rho$ net-regular, then the all-1 vector $\mathbf{j}$ is the eigenvector of  $\dot{G}$, $G$ and $\overline{G}$. So we have

\begin{equation}\label{abcrn}
  \rho^2+\frac{b-a}{2}\rho=\frac{a+b}{2}r+c(n-r-1)+r.
\end{equation}

In \cite{KS}, Koledin and Stani\'{c} classified all inhomogeneous strongly regular signed graphs (we write SRSGs shortly in further text) into the following five disjoint classes:

$\mathcal{C}_1$: SRSGs with $a=-b$, which are either complete or non-complete with $c\neq0$.

$\mathcal{C}_2$: SRSGs with $a=-b$, which are non-complete with $c=0$.

$\mathcal{C}_3$: SRSGs with $a\neq-b$, which are either complete or non-complete with $c=\frac{a+b} {2}$ .

$\mathcal{C}_4$: SRSGs with  $a\neq-b$, which are non-complete with $c=0$.

$\mathcal{C}_5$: SRSGs with  $a\neq-b$, which are non-complete with $c\neq\frac{a+b} {2}$ and $c\neq0$.

There are some results about this kind of strongly regular signed graphs. Z. Stani\'c also gave certain structural and spectral properties of such signed graphs in \cite{S1}. In particular, Stani\'c characterized the bipartite SRSGs and the SRSGs with 4 eigenvalues. In \cite{KS}, T. Koledin and Z. Stanić studied the SRSGs in class $\mathcal{C}_3$.   M. An\dj eli\'{c}, T. Koledin and Z.  Stani\'c determined the net-regular strongly regular signed graphs with $r\leq4$ in \cite{AKS1}. And they considered  relationships between net-regular, strongly regular and walk-regular signed graphs in \cite{AKS}.

We have studied the net-regular strongly regular signed graphs with $r=5$ in \cite{YH}. Therefore, we focus on the connected net-regular strongly regular signed graphs with degree 6 in this paper. Two known results about strongly regular signed graphs are given in Section 2. In Section 3, we determine all the connected net-regular strongly regular signed graphs with degree 6. And it is worth to note that the positive and negative edges of signed graphs in figures are represented by solid and dashed lines, respectively, in this paper.

\section{Preliminaries}

In this section, we give two  Lemmas which will be used in next section. The first one examines the relationship between parameters $a,b$ of an inhomogeneous strongly regular signed graph and its negation. The second one characterizes the number of negative walks of length 2 between two vertices in a connected non-complete net-regular strongly regular signed graph.

\begin{lemma}\cite{KS}\label{le1}
 If G is a SRSG belonging to $\mathcal{C}_i$, $1\leq i\leq 5$, so is $-\dot{G}$. The parameters $a$ and $b$ of an inhomogeneous SRSG $\dot{G}$ interchange their roles in the set of parameters of $-\dot{G}$.
\end{lemma}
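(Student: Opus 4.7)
The plan is to exploit the single observation $A(-\dot{G}) = -A(\dot{G})$, which immediately gives $A(-\dot{G})^2 = A(\dot{G})^2$. Since the two squared matrices coincide entry by entry, I would read off the parameters of $-\dot{G}$ directly from the defining condition applied to $\dot{G}$.

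First, I would note that negation leaves the underlying graph unchanged, so the diagonal entry (which equals the common value $r$) is preserved, and the entry at a non-adjacent pair (which equals $c$) is also preserved. The only thing that changes is the sign status of edges: $v_i \overset{+}{\sim} v_j$ in $-\dot{G}$ exactly when $v_i \overset{-}{\sim} v_j$ in $\dot{G}$, and vice versa. Using this swap together with the equality of the squared adjacency matrices, the $(i,j)$-entry of $A(-\dot{G})^2$ at a positive edge of $-\dot{G}$ equals $b$, while at a negative edge of $-\dot{G}$ it equals $a$. Consequently, $-\dot{G}$ is an SRSG with parameter tuple $(r,b,a,c)$, so $a$ and $b$ exchange roles while $r$ and $c$ are unchanged.

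Next, I would verify class preservation by plugging the swapped parameters into each of the five defining conditions. The condition $a=-b$ is symmetric in $a$ and $b$, as is the negation $a\neq -b$; the equation $c=\tfrac{a+b}{2}$ is also symmetric; the conditions $c=0$, $c\neq 0$, and completeness of the underlying graph do not involve $a,b$ at all. Hence each of $\mathcal{C}_1,\dots,\mathcal{C}_5$ is closed under negation, which completes the proof.

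I do not expect any real obstacle here: the entire argument rests on the identity $A(-\dot{G})^2 = A(\dot{G})^2$, and the rest is a bookkeeping check that the five classes are specified only by conditions symmetric in $a$ and $b$ (together with data, namely $r$, $c$, and the underlying graph, that negation leaves invariant). The mild subtlety worth spelling out explicitly is that the relation $v_i\overset{+}{\sim}v_j$ in $-\dot{G}$ corresponds to $v_i\overset{-}{\sim}v_j$ in $\dot{G}$, which is exactly what forces the interchange of $a$ and $b$ rather than any other permutation of parameters.
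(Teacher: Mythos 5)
Your proof is correct: the identity $A(-\dot{G})^2 = A(\dot{G})^2$ together with the swap of positive and negative adjacency under negation immediately yields the parameter tuple $(r,b,a,c)$ for $-\dot{G}$, and all five class-defining conditions are symmetric in $a$ and $b$. The paper does not reproduce a proof (the lemma is quoted from Koledin and Stani\'c), but your argument is the standard one underlying that reference, so there is nothing further to compare.
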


\begin{lemma}\cite{AKS1}\label{le2}
  Let $\dot{G}$ be a connected non-complete net-regular SRSG belonging to $\mathcal{C}_1\cup \mathcal{C}_4\cup \mathcal{C}_5$. Then for any two vertices $v_i$ and $v_j$ of $\dot{G}$, the number of negative walks of length 2 between $v_i$ and $v_j$ is even. Moreover, if this number is $2k$, $k\in \mathbb{N}$, then there are exactly $k$ common neighbours of $v_i$ and $v_j$ that are joined to $v_i$ by a positive edge and to $v_j$ by a negative edge, and exactly $k$ common neighbours that are joined to $v_i$ by a negative edge and to $v_j$ by a positive edge.
\end{lemma}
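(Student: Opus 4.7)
The plan is to reduce the statement to a commutativity assertion for two matrices. Let $A=A(\dot{G})$ and write $A=A_+-A_-$, where $A_+$ and $A_-$ are the $(0,1)$-adjacency matrices of the positive and negative edges of $\dot{G}$, so that the adjacency matrix of the underlying graph is $A_G=A_++A_-$. Entry-by-entry, $(A_+A_-)_{ij}$ counts the common neighbours $v_k$ of $v_i,v_j$ with $v_i\overset{+}{\sim}v_k$ and $v_k\overset{-}{\sim}v_j$, while $(A_-A_+)_{ij}$ counts those with $v_i\overset{-}{\sim}v_k$ and $v_k\overset{+}{\sim}v_j$. The number of length-2 negative walks between $v_i$ and $v_j$ is therefore $(A_+A_-)_{ij}+(A_-A_+)_{ij}$, so it suffices to prove $A_+A_-=A_-A_+$: this would simultaneously yield the evenness of this count and the equality of the two kinds of common neighbours.

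Since a direct computation gives $A_+A_--A_-A_+=\frac{1}{2}(AA_G-A_GA)$, I would further reduce to showing that $A$ and $A_G$ commute. Starting from the strong-regularity identity (\ref{def}) and solving for $A_G$ gives
\[
A_G \;=\; \frac{2}{a+b-2c}\Bigl[A^2+\frac{b-a}{2}A-cJ-(r-c)I\Bigr],
\]
which is legitimate exactly when $a+b-2c\neq 0$. I would verify this non-vanishing class by class: for $\mathcal{C}_1$, $a=-b$ and $c\neq 0$ force $a+b-2c=-2c\neq 0$; for $\mathcal{C}_4$, $c=0$ and $a\neq -b$ give $a+b-2c=a+b\neq 0$; for $\mathcal{C}_5$, the defining inequality $c\neq\frac{a+b}{2}$ gives $a+b-2c\neq 0$ directly. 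This is precisely the algebraic reason the classes $\mathcal{C}_2$ and $\mathcal{C}_3$ must be excluded.

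To finish, I would invoke net-regularity. Because $\dot{G}$ is $\rho$ net-regular, $A\mathbf{j}=\rho\mathbf{j}$, so $AJ=JA=\rho J$. Combined with the trivial commutations with $I$, $A$, and $A^2$, this shows that $A$ commutes with the right-hand side of the displayed formula and hence with $A_G$. Therefore $A_+A_-=A_-A_+$, so $(A_+A_-)_{ij}=(A_-A_+)_{ij}=k$ for some non-negative integer $k$, and the number of negative walks of length $2$ between $v_i$ and $v_j$ equals $2k$, with each of the two prescribed types of common neighbour contributing exactly $k$.

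The only non-routine ingredient is the non-vanishing of $a+b-2c$, which is exactly what delineates the admissible classes $\mathcal{C}_1\cup\mathcal{C}_4\cup\mathcal{C}_5$; once that is verified, the remainder is a short matrix manipulation powered by net-regularity through the identity $AJ=JA$.
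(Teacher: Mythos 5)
Your proposal is correct, and every step checks out: the identification of the two walk types with the entries of $A_+A_-$ and $A_-A_+$, the commutator identity $A_+A_--A_-A_+=\tfrac{1}{2}(AA_G-A_GA)$, the class-by-class verification that $a+b-2c\neq 0$ exactly on $\mathcal{C}_1\cup\mathcal{C}_4\cup\mathcal{C}_5$ (non-complete), and the use of net-regularity together with the symmetry of $A$ to get $AJ=JA=\rho J$. Note that the paper itself gives no proof of this lemma --- it is imported verbatim from \cite{AKS1} --- so there is nothing internal to compare against; your argument is the natural one (and, to the best of my knowledge, essentially the one in the cited source), reducing the combinatorial claim to the commutativity of $A_{\dot G}$ with $A_G$ via the defining identity \eqref{def}. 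The only cosmetic remark is that connectedness is never needed in your argument, which is harmless.
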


\section{The net-regular SRSGs with degree 6}

 In this section, we consider the net-regular SRSGs with degree 6. And we only need to consider the non-negative net-degree, i.e. 4, 2 and 0, by Lemma \ref{le1}.

\subsection{The 6-regular SRSGs with net-degree 4}
Let $\dot{G}$ be a  6-regular SRSG with net-degree 4, then $G^+$ is 5-regular and $G^-$ is 1-regular. Therefore, $G^-$  deduces a perfect matching of $\dot{G}$ and so $n$ is even.

If $\dot{G}\in \mathcal{C}_2$ or $\dot{G}$ is a complete signed graph in $\mathcal{C}_1$, then we have $A^2_{\dot{G}}+bA_{\dot{G}}=6I$ by equation \eqref{def}. It is obvious that $\dot{G}$ has two eigenvalues: net-degree 4 and $\frac{-3}{2}$. This contradicts that $b$ is integral.

If $\dot{G}\in \mathcal{C}_3$, then $G$ is isomorphic to $K_7$ \cite{KS},  which is contradictory to that $n$ is even.

Now we consider  the non-complete $\dot{G}\in \mathcal{C}_1\bigcup\mathcal{C}_4\bigcup\mathcal{C}_5$. And we have

\begin{equation}\label{6-4-abcn}
 10-5a-b=c(n-7)
\end{equation}
by equation \eqref{abcrn}.

Firstly, we give the following Lemma to show that $\dot{G}$ doesn't contain unbalanced triangles.

\begin{lemma}\label{triangle-1}
Let $\dot{G}\in \mathcal{C} _1\bigcup\mathcal{C}_4\bigcup\mathcal{C}_5$ be a connected and non-complete 6-regular SRSG with net-degree 4. then it contains no unbalanced triangle.
\end{lemma}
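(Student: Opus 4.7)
The plan is to argue by contradiction, assuming that $\dot G$ contains an unbalanced triangle. Since $G^-$ is a perfect matching, no triangle can contain three negative edges, and so the unbalanced triangle must take the form $v_1 v_2 v_3$ with $v_1 v_2$ negative and $v_1 v_3$, $v_2 v_3$ positive. Applying Lemma \ref{le2} to the positively adjacent pair $(v_1, v_3)$: the walk $v_1 \overset{-}{\sim} v_2 \overset{+}{\sim} v_3$ is a negative walk of length $2$, so there must exist a common neighbour $u$ with $v_1 \overset{+}{\sim} u \overset{-}{\sim} v_3$. Since $v_3$ has a unique negative neighbour, $u$ is the matched partner of $v_3$, and a symmetric application of Lemma \ref{le2} to $(v_2, v_3)$ yields $v_2 \overset{+}{\sim} u$ as well. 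Thus $\{v_1, v_2, v_3, u\}$ spans a $K_4$ whose negative edges are precisely the two matching edges $v_1 v_2$ and $v_3 u$.

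Next I would extract arithmetic constraints on the parameters. First, for any $w \in N^+(v_1) \cap N^+(v_2)$ the triangle $v_1 v_2 w$ is unbalanced, so by the previous step $w$'s matched partner $w'$ also lies in $N^+(v_1) \cap N^+(v_2)$; since the matching is fixed-point free, this set admits a free involution and so $b = |N^+(v_1) \cap N^+(v_2)|$ is even. Second, computing the diagonal entry $(A^3)_{v,v}$ from the identity \eqref{def} gives $5a - b$, which on the other hand equals $2(t_v^+ - t_v^-)$ (twice the signed triangle count through $v$) and is therefore even; with $b$ even this forces $a$ even. Using the degree bounds $|N(v_1) \cap N(v_3)| \le 5$ and $|N^+(v_1) \cap N^+(v_2)| \le 5$ restricts $(a,b)$ to $\{(-2,2),(-2,4),(0,2),(0,4)\}$. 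The two cases with $a = -2$ are then ruled out by counting unbalanced triangles through a vertex: this count equals $b$ plus the number of negative edges inside the $5$-set $N^+(v)$, hence is at most $b+2$, whereas $t_v^- \ge (b-5a)/2$ (from $t_v^+ \ge 0$) would demand $b + 2 \ge (b+10)/2$, which fails for $b \le 5$.

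Finally, substituting the surviving $(a,b) \in \{(0,2),(0,4)\}$ into \eqref{6-4-abcn} yields $c(n-7) = 8$ or $c(n-7) = 6$. Combined with $n$ even, $n \ge 8$, the exclusion of class $\mathcal{C}_3$ (so $c \ne (a+b)/2$), and the observation that $c = 0$ forces the left side to vanish, only $(a,b,c,n) = (0,2,8,8)$ and $(0,4,6,8)$ remain. The first is immediately impossible because $(A^2)_{v_i,v_k} = c = 8$ on any non-adjacent pair exceeds the maximum of $6$ common neighbours in a $6$-regular graph. For the second, $G$ must be the cocktail party graph $K_{2,2,2,2}$, and then for any vertex $v$ with matched partner $\tilde v$ and unique non-neighbour $v'$, one has $\tilde v \ne v'$ and the walk $v \to \tilde v \to v'$ is a negative walk of length $2$ between the non-adjacent pair $(v,v')$, contradicting $(A^2)_{v,v'} = c = 6 = |N(v) \cap N(v')|$, which would require all six such walks to be positive.

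The main technical obstacle, as I see it, is the final enumeration step: one must take care that the parity and bound arguments really do shrink the candidate set to essentially one case, and that each surviving parameter vector fails for a specific structural reason rather than merely on numerical grounds.
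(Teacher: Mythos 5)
Your proof is correct, and it takes a genuinely different route from the paper's. Both arguments start the same way: Lemma \ref{le2}, applied to a positively adjacent pair of the unbalanced triangle, produces the second ``crossing'' common neighbour, and since every vertex has a unique negative neighbour this pins that vertex down as a matched partner; you push this to the full $K_4$, while the paper only records the consequence $a\le 1$. From there the paper runs an explicit case analysis on $a=1$, $a<0$, $a=0$, constructing neighbourhood configurations by hand until each collapses (its $a=0$ branch ends in the numerical clash $n\in\{8,10\}$ versus $n>12$). You instead harvest global counting constraints: the matching gives a fixed-point-free involution on $N^{+}(v_1)\cap N^{+}(v_2)$, so $b$ is even (and $b\ge 2$ from the $K_4$); the identity $(A^{3})_{vv}=5a-b=2(t_v^{+}-t_v^{-})$ forces $a$ even; since each of two positively adjacent vertices has only one negative neighbour there are exactly $0$ or $2$ negative walks between them, so $a\in\{-2,0\}$ after parity; the bound $t_v^{-}\le b+2$ (at most two matching edges inside the $5$-set $N^{+}(v)$) kills $a=-2$; and \eqref{6-4-abcn} with $n$ even, $n\ge 8$ and the exclusion of $\mathcal{C}_3$ --- which is precisely what removes the otherwise admissible solution $(n,c)=(10,2)$ for $(a,b)=(0,4)$, so that step deserves to be made fully explicit --- leaves only $n=8$, dispatched by $c=8>6$ and the $K_{2,2,2,2}$ negative-walk contradiction. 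Your version is shorter and less configuration-dependent, and the two parity facts and the triangle-count bound are reusable elsewhere in this classification; the paper's version is more elementary and entirely self-contained. I checked each of your reductions and found no gaps.
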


\begin{proof}
If  $\dot{G} $ contains an unbalanced triangle, then this  triangle must have only one negative edge since vertex net-degree is 4. Suppose $v_iv_jv_k$ is an unbalanced triangle with $v_i\overset{+}{\sim}v_j$, $v_i\overset{-}{\sim}v_k$, $v_j\overset{+}{\sim}v_k$. Then there is another common neighbour $v_l$ of $v_i,v_j$ such that $v_l\overset{+}{\sim}v_i$, $v_l\overset{-}{\sim}v_j$ by Lemma \ref{le2} and so we have $a\leq 1$. We're going to do this in following two cases since $a=1$ might imply that there exist two positively vertices have only one common neighbour.

If $a=1$, then there are other three common positive neighbours $v_m,v_s,v_t$ for $v_i$ and $v_j$. We have $v_l\overset{+}{\sim}v_k,v_m,v_t,v_s$ and $v_k\overset{+}{\sim}v_m,v_t,v_s$ by considering the positive edges $v_iv_l$ and $v_jv_k$. Now vertices $v_i,v_j,v_k,v_l$ have enough degree. We consider the positive edge $v_iv_t$, then we have $v_t\overset{-}{\sim}v_s$ or $v_t\overset{-}{\sim}v_m$. Without loss of generality, we suppose that $v_t\overset{-}{\sim}v_s$. Then $v_t\overset{+}{\sim}v_m$. And we have $v_s\overset{+}{\sim}v_m$ by positive edge $v_iv_s$. But the net-degree of vertex $v_m$ will be 6. A contradiction.

If $a<0$, then there must be two negative walks of length 2 between $v_i$ and its every positive neighbour. So every positive neighbour of $v_i$ is positively adjacent to $v_k$ and it need to be negatively adjacent to another one positive neighbour of $v_i$. But there are five positive neighbours for $v_i$, they can not be paired.

If $a=0$, then there may be zero or four common neighbours between $v_i$ and its positive neighbours. Suppose $N(v_i)\bigcap N(v_j)=\{v_k,v_l,v_m,v_s\}$ and $v_{t_1}$, $v_{t_2}$ are the remaining neighbours of $v_i$ and $v_j$, respectively. Then $v_m,v_s\overset{+}{\sim}v_i,v_j$ and $v_{t_1}\overset{+}{\sim}v_i$, $v_{t_2}\overset{+}{\sim}v_j$. For the edge $v_iv_{t_1}$, if $v_{t_1}$ has common neighbours with $v_i$, then there must be two negative walks of length 2 between $v_i$ and $v_{t_1}$ by $a=0$. In this case, there are two negative walks of length 2 between $v_i$ and its positive neighbours. Then every positive neighbour of $v_i$ is positively adjacent to $v_k$. And all  positive neighbours of $v_i$ need to be paired with negative edges. This is impossible obviously. So $v_{t_1}$ don't have common neighbours with $v_i$ and $v_{t_2}$ don't have common neighbours with $v_j$. Then we have $v_l\overset{+}{\sim}v_k,v_m,v_s$  and  $v_k\overset{+}{\sim}v_m,v_s$ by the positive edges $v_iv_l$ and $v_jv_k$ . So we get that $b=4$ by the negative edge $v_jv_l$ since $v_{t_2}\nsim v_j$. And then we have  $v_m\overset{-}{\sim}v_s$ by positive edge $v_iv_m$.

By equation \eqref{6-4-abcn}, we have $n=13,10,9,8$ if  $(a,b)=(0,4)$. Since $n$ is even,  $n=10,8$. In fact, we have $n>12$ since  $v_{t_1}$ has no common neighbours with $v_i$.

Therefore,  $\dot{G} $  contains no unbalanced triangles.
\end{proof}

If  $\dot{G}\in \mathcal{C} _1\bigcup\mathcal{C}_4\bigcup\mathcal{C}_5$ is a non-complete 6-regular SRSG with net-degree 4, we have that any two negatively adjacent vertices don't have common neighbours, i.e. $b=0$, by above Lemma and $G^-$ is 1-regular. And so $a\in \{0,1,2,3,4\}$. In this case, we get that
\begin{equation*}
  10-5a=c(n-7)
\end{equation*}
by equation \eqref{6-4-abcn} and $b=0$.

Therefore, the possible parameters sets are :
 $(17,6,0,0,1)$, $(12,6,0,0,2)$, $(9,6,0,0,5)$, $(12,6,1,0,1)$, $(8,6,1,0,5)$, $(n,6,2,0,0)$, $(7,6,2,0)$ (Complete), $(12,6,3,0,-1)$, $(8,6,3,0,-5)$, $(17,6,4,0,-1)$, $(12,6,4,0,-2)$, $(9,6,4,0,-5)$.

Since $n$ is even, the cases of $n=17,9,7$ are taken out.
Let $v_iv_j$ be a negative edge of $\dot{G}$, then there is no common neighbour between $v_i$ and $v_j$.  Both $v_i$ and $v_j$ have another five positive neighbours, respectively, by $r=6$.  So we have $n\geq12$.

If $\dot{G}$ has parameters $(12,6,1,0,1)$, then there is only one positive walk of length 2 between two positively adjacent vertices. But the five  positive neighbours of $v_i$ cannot be paired. A contradiction.

For parameters $(12,6,0,0,2)$: $a=b=0$ implies two adjacent vertices don't have common neighbours and $c=2$ implies that there are only two positive walks of length 2 or four positive walks and two negative walks of length 2 between two non-adjacent vertices. Suppose $V(\dot{G})=\{v_1,v_2,\dots,v_{12}\}$ and $N^+(v_1)=\{v_2,\dots,v_6\}$, $N^-(v_1)=\{v_7\}$, $N^+(v_7)=\{v_8,\dots,v_{12}\}$. We have $v_7\nsim v_2,\dots,v_6$ by $a=b=0$. Since there is one  negative walk of length 2 between $v_7$ and $v_2,\dots,v_6$, respectively, $|N(v_7)\bigcap N(v_2)|=|N(v_7)\bigcap N(v_3)|=\cdots=|N(v_7)\bigcap N(v_6)|=6$. Therefore, $G=K_{6,6}$. So we get that $\dot{G}=\dot{S}^1_{12}$ (Figure \ref{S12-1}) since every vertex has net-degree 1.

\begin{figure}[h]
  \centering
  \includegraphics[width=7cm]{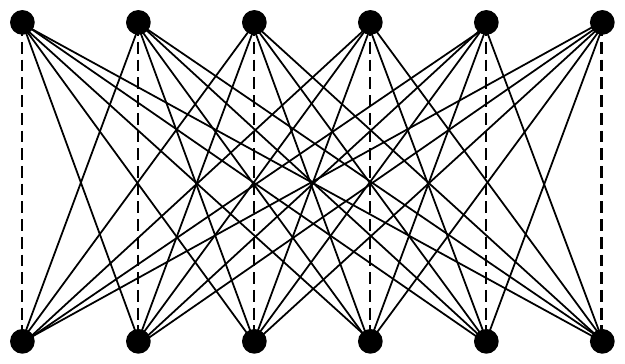}
  \caption{$\dot{S}^1_{12}$}\label{S12-1}
\end{figure}

If $\dot{G}$ has parameters  $(12,6,3,0,-1)$, then there are only three positive walks of length 2 between two positively adjacent vertices. Suppose $v_iv_j$ is a positive edge of $\dot{G}$,  $N(v_i)\bigcap N(v_j)=\{v_k,v_l, v_m\}$ and $v_s\overset{+}{\sim}v_i$, $v_t\overset{+}{\sim}v_j$. Then we have  $v_s, v_t\overset{+}{\sim}v_k,v_l, v_m$ by $a=3$. We consider the positive edge $v_iv_k$, then  $v_k\overset{+}{\sim}v_l$ or $v_k\overset{+}{\sim}v_m$. But there are only two positive walks of length 2 between $v_i,v_m$ or  $v_i,v_l$. A contradiction.

If $\dot{G}$ has parameters $(12,6,4,0,-2)$, then $a=4$ implies that $G^+$ is the union  of $K_6$, so $\dot{G}$ is isomorphic to  $\dot{S}^2_{12}$  as shown in Figure \ref{S12-2}.

\begin{figure}
  \centering
  \includegraphics[width=6cm]{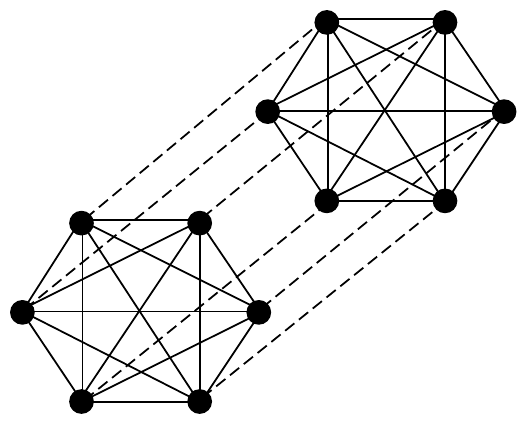}
  \caption{$\dot{S}^2_{12}$}\label{S12-2}
\end{figure}

If $\dot{G}$ has parameters $(n,6,2,0,0)$, then there are only two positive walks of length 2 between two positively adjacent vertices in $\dot{G}$. Suppose  $v_1v_2$ is a negative edge of $\dot{G}$, then $|N(v_1)\bigcap N(v_2)|=0$. So we suppose $N^+(v_1)=\{v_3,v_4, v_5,v_6,v_7\}$ and $N^+(v_2)=\{v_8,v_9, v_{10},v_{11},v_{12}\}$. Consider the positive edge $v_1v_3$, then $v_3$ is positively adjacent to two of $\{v_4,v_5,v_6,v_7\}$ by $a=2$. Without loss of generality, we suppose $v_3\overset{+}{\sim}v_4, v_7$. Then $v_4\overset{+}{\nsim}v_7$. Otherwise, positive edges $v_1v_5$ and $v_1v_6$ cannot satisfy $a=2$. Then $v_4$ is positively adjacent to  one of $\{v_5,v_6\}$. We suppose $v_4\overset{+}{\sim}v_5$. Now, $v_5$ need to be positively adjacent to  one of $\{v_7,v_6\}$. In fact,  we have $v_5\overset{+}{\nsim}v_7$. Otherwise, positive edge $v_1v_6$ cannot satisfy $a=2$. So we get that $v_5\overset{+}{\sim}v_6$ and $v_6\overset{+}{\sim}v_7$. Similarly, we have $v_8\overset{+}{\sim}v_9$, $v_9\overset{+}{\sim}v_{10}$, $v_{10}\overset{+}{\sim}v_{11}$, $v_{11}\overset{+}{\sim}v_{12}$ and $v_{12}\overset{+}{\sim}v_8$.

Now we consider the two non-adjacent vertices $v_1$ and $v_8$. They need one negative walk and two positive walks of length 2 by $c=0$. So we can suppose $v_3\overset{-}{\sim}v_8$. And we can also suppose $v_4\overset{-}{\sim}v_9$, $v_5\overset{-}{\sim}v_{10}$, $v_6\overset{-}{\sim}v_{11}$ and $v_7\overset{-}{\sim}v_{12}$ for the non-adjacent vertices pairs $\{v_1,v_9\}$, $\{v_1,v_{10}\}$, $\{v_1,v_{11}\}$ and $\{v_1,v_{12}\}$ without loss of generality. Also by vertices $v_1$ and $v_8$, we have $v_5, v_6 \overset{+}{\sim}v_8$ since $v_3\overset{+}{\sim}v_4, v_7$ and $v_3\overset{-}{\sim}v_8$.  In a similar way, we have $v_6, v_7 \overset{+}{\sim}v_9$,  $v_3, v_7 \overset{+}{\sim}v_{10}$, $v_3, v_4 \overset{+}{\sim}v_{11}$ and $v_4, v_5 \overset{+}{\sim}v_{12}$ for the non-adjacent vertices pairs $\{v_1,v_9\}$, $\{v_1,v_{10}\}$, $\{v_1,v_{11}\}$ and $\{v_1,v_{12}\}$. Now, every vertex has degree 6. Then we get a signed graph  $\dot{S}^3_{12}$ in Figure \ref{6-S12}. It is a SRSG with parameters $(12,6,2,0,0)$.

\begin{figure}[h]
  \centering
  \includegraphics[width=8cm]{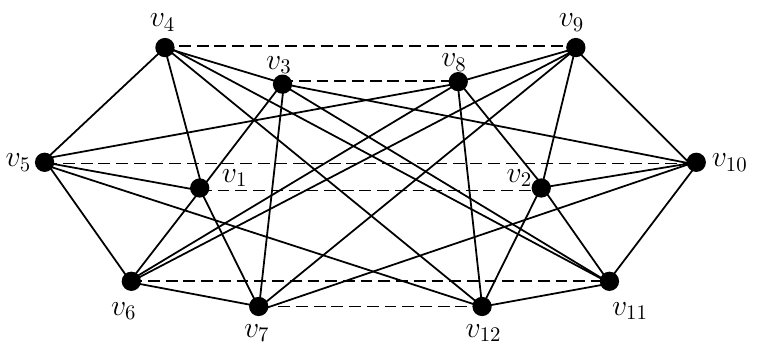}
  \caption{$\dot{S}^3_{12}$}\label{6-S12}
\end{figure}

To sum up, the following theorem can be obtained.

\begin{theorem}
  There are three connected 6-regular and 4 net-regular SRSGs:  $\dot{S}^1_{12}$, $\dot{S}^2_{12}$  and $\dot{S}^3_{12}$.
\end{theorem}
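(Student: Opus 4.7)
The plan is to perform a careful case analysis organized by the class $\mathcal{C}_i$ to which $\dot{G}$ belongs and then by the admissible parameter tuples $(n,6,a,b,c)$. The starting observation is that since the net-degree is $4$, the spanning subgraph $G^{-}$ is $1$-regular, hence a perfect matching, which forces $n$ to be even. First I would dispatch the ``easy'' classes: if $\dot{G}\in \mathcal{C}_2$ or $\dot{G}$ is complete in $\mathcal{C}_1$, equation \eqref{def} yields $A^2_{\dot{G}} + b A_{\dot{G}} = 6I$, and combined with $4$ being an eigenvalue this forces the second eigenvalue to be $-3/2$ and hence $b=-5/2\notin\mathbb{Z}$; if $\dot{G}\in \mathcal{C}_3$, a result from \cite{KS} gives $G\cong K_7$, contradicting the parity of $n$.

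Next I would concentrate on the main case $\dot{G}\in \mathcal{C}_1\cup\mathcal{C}_4\cup\mathcal{C}_5$ (non-complete), in which equation \eqref{6-4-abcn} reads $10-5a-b=c(n-7)$. Lemma~\ref{triangle-1} rules out unbalanced triangles, and combined with the fact that $G^{-}$ is a perfect matching this forces $b=0$: any common neighbour $v_k$ of a negatively adjacent pair $v_iv_j$ would be joined to both $v_i$ and $v_j$ by positive edges (since each vertex has only one negative edge), producing an unbalanced triangle. Hence $a\in\{0,1,2,3,4\}$ and $10-5a=c(n-7)$. Parity together with the elementary bound $n\ge 12$, obtained by expanding a negative edge into the two disjoint $5$-element positive neighbourhoods of its endpoints, trims the list of feasible tuples to $(12,6,0,0,2)$, $(12,6,1,0,1)$, $(n,6,2,0,0)$ with $n$ even and $n\ge 12$, $(12,6,3,0,-1)$, and $(12,6,4,0,-2)$.

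I would then treat each surviving tuple. For $(12,6,1,0,1)$ a count of positive walks of length $2$ at a single vertex yields a parity obstruction, since the five positive neighbours of a vertex cannot be paired off into $a=1$ pairs. For $(12,6,3,0,-1)$ I would fix a positive edge $v_iv_j$ with its three positive common neighbours and one extra positive neighbour on each side, and contradict $a=3$ by reading off walk counts on a secondary positive edge $v_iv_k$. For $(12,6,0,0,2)$ the constraints $a=b=0$ together with the walk counts at the endpoints of the negative matching force $G=K_{6,6}$ and a unique signing, yielding $\dot{S}^1_{12}$; for $(12,6,4,0,-2)$ the equality $a=4$ forces $G^{+}$ to be a disjoint union of two $K_6$'s joined by a negative perfect matching, yielding $\dot{S}^2_{12}$. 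The main obstacle is the tuple $(n,6,2,0,0)$: here $n$ is not pinned down by the parameter equation, so one has to exploit $a=2$ on each side of a fixed negative edge to show that $G^{+}$ restricted to each side of the matching is a $5$-cycle, then use $c=0$ across non-adjacent pairs to match the two $5$-cycles rigidly, which simultaneously forces $n=12$ and yields $\dot{S}^3_{12}$.
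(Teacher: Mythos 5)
Your proposal is correct and follows essentially the same route as the paper: dispatch $\mathcal{C}_2$ and the complete cases via the integrality of $b$, use Lemma \ref{triangle-1} together with the fact that $G^-$ is a perfect matching to force $b=0$, reduce via $10-5a=c(n-7)$, parity, and the bound $n\ge 12$ to the same five surviving tuples, and then eliminate or realize each one exactly as the paper does (parity obstruction for $a=1$, walk-count contradiction for $a=3$, and the constructions $\dot{S}^1_{12}$, $\dot{S}^2_{12}$, $\dot{S}^3_{12}$ for the remaining three). No substantive differences.
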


\subsection{The 6-regular SRSGs with net-degree 2}

Let $\dot{G}$ be a connected  6-regular and 2 net-regular SRSG, then $G^+$ is 4-regular and $G^-$ is 2-regular. Therefore,  $G^-$ is a cycle or the union of cycles.

If $\dot{G}\in \mathcal{C}_1$ is complete or $\dot{G}\in \mathcal{C}_2$, then $\dot{G}$ has two eigenvalues 2 and $-3$ by $A^2_{\dot{G}}+bA_{\dot{G}}-6I=0$. Therefore, we have $b=1$ and $a=-1$. So the parameters are $(n,6,-1,1,0)$ or $(7,6,-1,1)$.
 In fact, we have $(a,b)\neq(-1,1)$ by the following Lemma.

\begin{lemma}\label{6-2-a-1b1}
 If $\dot{G}$ is a connected non-complete 6-regular and  2 net-regular SRSG, then  $(a,b)\neq(-1,1)$.
\end{lemma}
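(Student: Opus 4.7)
The plan is to assume $(a,b)=(-1,1)$ and derive a contradiction through a spectral reduction, local sign identities at a vertex, and a case analysis based on the edge between the two negative neighbours of that vertex. Substituting $r=6$, $\rho=2$, $a=-1$, $b=1$ into \eqref{abcrn} gives $c(n-7)=0$; since $\dot{G}$ is non-complete we have $n>7$ and hence $c=0$, so $\dot{G}\in\mathcal{C}_2$ and \eqref{def} reduces to $A^2_{\dot{G}}+A_{\dot{G}}=6I$. Thus $\dot{G}$ has exactly the two eigenvalues $2$ and $-3$, and combining $\mathrm{tr}(A_{\dot{G}})=0$ with $\mathrm{tr}(A^2_{\dot{G}})=6n$ gives multiplicities $3n/5$ and $2n/5$, so $5\mid n$ and $n\ge 10$.

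Next, I fix a vertex $v$, write $N^+(v)=\{p_1,\ldots,p_4\}$, $N^-(v)=\{n_1,n_2\}$, and set $\epsilon=A_{n_1n_2}\in\{-1,0,+1\}$. Expanding $(A^2_{\dot{G}})_{vx}=-A_{vx}$ at $x=p_i$ gives $\sum_{j\neq i}A_{p_jp_i}-\sum_{k=1}^{2}A_{n_kp_i}=-1$, and at $x=n_j$ it gives $\sum_{i=1}^{4}A_{p_in_j}-\epsilon=1$. Summing the four equations over $i$ and the two over $j$ yields
\[
e_P^+-e_P^-=-1+\epsilon,\qquad e_{PN}^+-e_{PN}^-=2+2\epsilon,
\]
where $e_P^{\pm}$ (resp.\ $e_{PN}^{\pm}$) are the numbers of positive/negative edges inside $P=N^+(v)$ (resp.\ between $P$ and $\{n_1,n_2\}$).

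Finally I split on the value of $\epsilon$. When $\epsilon=-1$, $\{v,n_1,n_2\}$ is a triangle component of $G^-$, the identities above force $e_{PN}^{\pm}=0$ and $e_P^-\ge 2$, and every $p_i$ has both of its negative neighbours in $V\setminus N[v]$; combining the identity $(A^2_{\dot{G}})_{p_ip_j}=-A_{p_ip_j}$ for pairs inside $P$ with the local net-degree constraint $d^+(p_i)-d^-(p_i)=2$ should produce a contradiction. The cases $\epsilon=0$ and $\epsilon=+1$ are treated by the analogous local bookkeeping, exploiting also the rigid non-adjacent condition $(A^2_{\dot{G}})_{xy}=0$ which says every non-adjacent pair has equal numbers of positive and negative $2$-walks. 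The chief obstacle is this case analysis: the relation $A^2_{\dot{G}}+A_{\dot{G}}=6I$ alone is very symmetric and does not exclude $n=10,15,\ldots$, so the contradiction must come from the interplay between the local edge counts at $v$ and the global non-adjacent identities.
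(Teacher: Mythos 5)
Your preliminary reductions are correct and are in fact a clean complement to the paper: substituting $(r,\rho,a,b)=(6,2,-1,1)$ into \eqref{abcrn} does give $c(n-7)=0$, hence $c=0$ and $\dot{G}\in\mathcal{C}_2$ with $A^2_{\dot{G}}+A_{\dot{G}}=6I$, and the multiplicity count giving $5\mid n$ is right. The local identities at $v$ (namely $\sum_{j\neq i}A_{p_jp_i}-\sum_k A_{n_kp_i}=-1$ and $\sum_i A_{p_in_j}-\epsilon=1$, hence $e_P^+-e_P^-=-1+\epsilon$ and $e_{PN}^+-e_{PN}^-=2+2\epsilon$) are also correctly derived. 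But the proof stops exactly where the difficulty begins. The aggregate constraints you extract are satisfiable on their own: for $\epsilon=-1$ the assignment $e_{PN}^\pm=0$, $e_P^+=0$, $e_P^-=2$ meets every identity you have written down, and similarly for $\epsilon=0,+1$, so no contradiction follows from this bookkeeping. The decisive step is announced only as ``should produce a contradiction'' and ``treated by the analogous local bookkeeping,'' and you yourself flag that the relation $A^2_{\dot{G}}+A_{\dot{G}}=6I$ is too symmetric to finish the job. That missing step is the entire content of the lemma, so as it stands this is a plan, not a proof.

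For comparison, the paper's argument works at the level of individual edges rather than sums over a neighbourhood: from $b=1$ and the parity of negative $2$-walks it pins down that a negative edge has exactly one common neighbour (ruling out five by an explicit configuration argument) and a positive edge exactly three, and then builds the forced local picture around a positive edge until a vertex violates $a=-1$. If you want to salvage your route, you must descend to the pairwise identities $(A^2_{\dot{G}})_{xy}=-A_{xy}$ for adjacent $x,y$ inside $N(v)$ and $(A^2_{\dot{G}})_{xy}=0$ for non-adjacent pairs, and actually trace the forced signs to a contradiction in each $\epsilon$-case; be aware also that the paper's Lemma \ref{le2} is stated only for $\mathcal{C}_1\cup\mathcal{C}_4\cup\mathcal{C}_5$, whereas your reduction shows the graph lies in $\mathcal{C}_2$, so if your completed argument (or the parity facts you want to import) leans on that lemma you would need to justify its applicability here.
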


\begin{proof}
  We suppose that $(a,b)=(-1,1)$ for $\dot{G}$.   Let $v_iv_j$ be an edge of  $\dot{G}$ and $\sigma$ be the sign function of $\dot{G}$. If $\sigma(v_iv_j)=-1$, then $v_i,v_j$ have one or five common neighbours by $b=1$. We claim that $v_i,v_j$ must have one common neighbour for  $\sigma(v_iv_j)=-1$. Otherwise, suppose that $N(v_i)\bigcap N(v_j)= \{v_k,v_l,v_m,v_s,v_t\}$ and $v_k,v_m,v_t\overset{+}{\sim}v_i,v_j$,   $v_l\overset{+}{\sim}v_i$,   $v_l\overset{-}{\sim}v_j$, $v_s\overset{-}{\sim}v_i$,  $v_l\overset{+}{\sim}v_j$. We consider the negative edge $v_iv_s$. Since the situation of the two negative walks  of length 2 between $v_i$ and $v_k$ is contradictory to Lemma \ref{le2} if $v_s\overset{+}{\sim} v_k$, we have  $v_s\overset{-}{\sim} v_k$. Then we have  $v_s\overset{+}{\sim}v_l,v_m,v_t$ by $b=1$. But the situation of the two negative walks of length 2 between $v_i$ and $v_m$ also contradicts Lemma \ref{le2}.

  Therefore, two negatively adjacent vertices have one common neighbour and two positively adjacent vertices have three common neighbours by  $(a,b)=(-1,1)$. Suppose that  $\sigma(v_iv_j)=1$, $N(v_i)= \{v_j, v_k,v_l,v_m,v_{s_1},v_{t_1}\}$ and $N(v_j)= \{v_i, v_k,v_l,v_m,v_{s_2},v_{t_2}\}$. We say that the edges of the positive walk of length 2 between $v_i$ and $v_j$ are positive. Otherwise, there exist two negatively adjacent vertices have  one negative walk of length 2. Then we can suppose that $v_k\overset{+}{\sim}v_i$,   $v_k\overset{-}{\sim}v_j$, $v_l\overset{+}{\sim}v_i,v_j$, $v_m\overset{-}{\sim}v_i$,  $v_m\overset{+}{\sim}v_j$ and $v_{s_1}\overset{-}{\sim}v_i,v_{t_1}\overset{+}{\sim}v_i$. We get that  $v_m$ is not adjacent to other neighbours of $v_i$ by the negative edge $v_i v_m$. Then $v_k\overset{+}{\sim}v_{s_1}$ and $v_k\overset{+}{\sim}v_l$ or $v_k\overset{+}{\sim}v_{t_1}$ by  $a=-1$. If  $v_k\overset{+}{\sim}v_l$, there will be  two positive walks of length 2 between $v_i$ and $v_l$. This is contradictory to $a=-1$. For $v_k\overset{+}{\sim}v_{t_1}$, we get that $a\geq 1$ for the positive edge $v_iv_{t_1}$ since $v_{t_1}\nsim v_{s_1}, v_m$. A contradiction.
\end{proof}

If $\dot{G}\in \mathcal{C}_3$, then $\dot{G}$ is complete \cite{KS}. So $G^-$ is $C_7$ or $C_3\bigcup C_4$. But it is easy to verify that the corresponding signed graphs are not SRSGs.

Now we consider the non-complete $\dot{G}\in \mathcal{C}_1\bigcup\mathcal{C}_4\bigcup\mathcal{C}_5$. We have
\begin{equation}\label{6-2-abcn}
4a+2b+c(n-7)=-2
\end{equation}
by equation \eqref{abcrn}.

At first, we give some conditions for the parameters $a$ and $b$ in the following Lemma.

\begin{lemma}\label{6-2-a}
  Let $\dot{G}\in \mathcal{C}_1\bigcup\mathcal{C}_4\bigcup\mathcal{C}_5$ be a connected non-complete 6-regular and 2 net-regular SRSG. Then

  (1) $a\neq-3$;

  (2) If $a=-4$, then $\dot{G}=\dot{S}^2_8$ (Figure \ref{86-446}) has parameters $(8,6,-4,4,6)$;

  (3) $b\neq5$;

  (4) If  $b=4$, it must have $a=-4$.
\end{lemma}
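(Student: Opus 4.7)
All four parts follow the same template: assume the forbidden value of $a$ or $b$, use Lemma~\ref{le2} (parity of length-$2$ negative walks) and the net-degree constraint $d^+=4$, $d^-=2$ to determine the sign pattern of common neighbours on a critical edge, and either derive a contradiction or, in part~(2), identify the graph. The equation~\eqref{6-2-abcn}, $4a+2b+c(n-7)=-2$, is brought in whenever a candidate tuple $(a,b,c,n)$ survives the local walk count.

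For part~(1), $a=-3$ together with $a^-$ even (Lemma~\ref{le2}) and $a^++a^-\le 5$ forces $(a^+,a^-)=(1,4)$ on every positive edge $v_iv_j$. So $v_iv_j$ has five common neighbours, split by Lemma~\ref{le2} and the net-degree ledger as two $(+,-)$, two $(-,+)$, and one $(+,+)$; in particular $N(v_i)\setminus\{v_j\}=N(v_j)\setminus\{v_i\}$. A second application of the same count on the positive edge $v_iu$, where $u$ is the $(+,+)$ common, uniquely fixes $u$'s signs to the four other commons. Iterating once more on the edge $u_3u$ (with $u_3$ a $(-,+)$ common) produces a rigid seven-vertex sub-configuration; either the $u_k$'s are mutually adjacent (so $G=K_7$, contradicting non-completeness) or an external neighbour is introduced. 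In the latter case a direct walk count on a derived negative edge gives $b=5$, so \eqref{6-2-abcn} forces $c(n-7)=0$; hence $c=0$ for $n\ge 8$, contradicting the walk count between the non-adjacent pair $u_1,u_2$, which yields $c\in\{5,6\}$.

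For part~(2), $a=-4$ admits only $(a^+,a^-)=(0,4)$: four common neighbours per positive edge, two $(+,-)$ and two $(-,+)$, plus one non-common positive neighbour on each side. Repeating this count on every positive edge at $v_i$ successively shows that each newly introduced vertex must already appear in the picture, so the configuration closes on eight vertices $\{v_i,v_j,u_1,u_2,u_3,u_4,x,y\}$. Direct walk counts on the derived structure give $b=4$, and \eqref{6-2-abcn} then yields $c=6$; the resulting signs are completely determined, producing $\dot{S}^2_8$ of Figure~\ref{86-446}.

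For parts~(3) and~(4), $b=5$ (resp.\ $b=4$) allows only $(b^+,b^-)=(5,0)$ (resp.\ $(4,0)$), so the negative edge $v_iv_j$ has five (resp.\ four) same-sign common neighbours; the net-degree ledger splits these as four $(+,+)$ plus one $(-,-)$ (resp.\ three $(+,+)$ plus one $(-,-)$). In both cases the $(-,-)$ common is the second negative neighbour of each endpoint, placing the negative edge inside an all-negative triangle. For (3) this triangle exhausts both endpoints' neighbourhoods, and counting walks on the positive edge $v_iu_1$ pins $a$ to one of at most six candidates; each is then eliminated by either a secondary walk count on the edges among $u_1,\dots,u_4$ or an integrality/positivity failure in \eqref{6-2-abcn} once the external $u_k$-neighbours and the $2$-regularity of $G^-$ are tracked. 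For (4), the all-negative triangles make $G^-$ a disjoint union of triangles (so $3\mid n$); the walks on $v_iu_1$ through the other two triangle vertices contribute exactly two negative walks, and Lemma~\ref{le2} with the net-degree of $u_1$ permits only the sign pattern giving two further negative walks and no positive walks on the remaining three candidate commons, i.e.\ $a=-4$. The main obstacle throughout parts~(3) and~(4) is the extensive case analysis of sign and adjacency choices among the $u_k$'s and any external neighbours.
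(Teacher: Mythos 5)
Your overall template is the paper's (local walk counts on a critical edge, Lemma~\ref{le2}, the net-degree ledger $d^+=4$, $d^-=2$, and equation~\eqref{6-2-abcn}), and parts (1) and (2) are in substance the paper's arguments. Your ``external neighbour'' branch in (1) is vacuous: under $a=-3$ every positive edge $uv$ has five common neighbours, so $N(u)\setminus\{v\}=N(v)\setminus\{u\}$, and since each of the five common neighbours of $v_i,v_j$ is positively adjacent to at least one of $v_i,v_j$, all seven vertices are pairwise adjacent and $G=K_7$ immediately; no eighth vertex can appear.

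Parts (3) and (4), however, contain genuine gaps. In (4) you assert that the four common neighbours of a negative edge must split as three $(+,+)$ plus one $(-,-)$. That is not forced: $v_i$ has four positive neighbours besides $v_j$, so all four common neighbours may be positively adjacent to both endpoints, with the second negative neighbour $v_{t_1}$ of $v_i$ lying outside $N(v_j)$. In that case there is no all-negative triangle on $v_iv_j$ and your argument, which hinges on that triangle, never starts. (The paper handles this case separately: the negative edge $v_iv_{t_1}$ with $b=4$ forces $v_{t_1}\overset{+}{\sim}v_k,v_l,v_m,v_s$, and then the positive edge $v_iv_k$ carries two negative walks of the \emph{same} orientation, through $v_j$ and through $v_{t_1}$, so Lemma~\ref{le2} doubles them to four and $a=-4$ follows from (1).) In (3) the decisive step is missing: once you have the all-negative triangle $v_iv_jv_t$ and $v_t\overset{+}{\sim}v_k,v_l,v_m,v_s$, the positive edge $v_iv_k$ already carries two same-orientation negative walks (through $v_j$ and $v_t$), so Lemma~\ref{le2} forces two more, giving four negative and at most one positive walk, hence $a\in\{-4,-3\}$, which (1) and (2) exclude because $\dot{S}^2_8$ has $b=4$. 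Your substitute --- pinning $a$ to ``at most six candidates'' and eliminating each by a secondary count or by \eqref{6-2-abcn} --- is not carried out and would not work as stated: for instance $(a,b)=(1,5)$ gives $c(n-7)=-16$ in \eqref{6-2-abcn}, which has several integral solutions, so the Diophantine equation alone does not dispose of the surviving candidates.
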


\begin{proof}

  (1) Suppose that  $v_iv_j$ is a positive edge of  $\dot{G}$. If $a=-3$, then there are four negative walks and one positive walk of length 2 between two positively adjacent vertices. We suppose $N(v_i)\bigcap N(v_j)=\{v_k, v_l, v_m, v_s, v_t\}$ and $v_k,v_l\overset{+}{\sim}v_i$, $v_k,v_l\overset{-}{\sim}v_j$ and $v_m,v_s\overset{-}{\sim}v_i$, $v_m,v_s\overset{+}{\sim}v_j$,   $v_t\overset{+}{\sim}v_i,v_j$.  By the positive edges $v_iv_k$ and  $v_iv_l$, we have $v_k\sim v_l,v_m,v_s,v_t$ and $v_l\sim v_m,v_s,v_t$. And by the positive edges $v_jv_m$ and  $v_jv_s$, we have $v_m\sim v_s,v_t$ and $v_s\sim v_t$. Then  $\dot{G}$ is complete. A contradiction.

  (2) Suppose that  $v_iv_j$ is a positive edge of  $\dot{G}$. If $a=-4$, then $v_i$, $v_j$ have four common neighbours which denoted by $v_k$, $v_l$, $v_m$, $v_s$. We suppose that $v_k,v_l\overset{+}{\sim}v_i$, $v_k,v_l\overset{-}{\sim}v_j$ and $v_m,v_s\overset{-}{\sim}v_i$, $v_m,v_s\overset{+}{\sim}v_j$. Let $v_{t_1}$,  $v_{t_2}$ be the remaining neighbour of $v_i$, $v_j$, respectively. Then $v_{t_1}\overset{+}{\sim}v_i$ and $v_{t_2}\overset{+}{\sim}v_j$. Since $a=-4$, we have $v_{t_1}\overset{-}{\sim}v_k,v_l$, $v_{t_1}\overset{+}{\sim}v_m,v_s$ and  $v_{t_2}\overset{-}{\sim}v_m,v_s$, $v_{t_2}\overset{+}{\sim}v_k,v_l$ by the positive edges  $v_i v_{t_1}$ and $v_j v_{t_2}$. And by the positive edges  $v_i v_k$ and $v_i v_l$, we must have $v_k,v_l\overset{+}{\sim}v_m,v_s$. Now, $d(v_k)=d(v_l)=d(v_m)=d(v_s)=6$. There are only three negative walks of length 2 between $v_m$ and $v_{t_1}$. So we have $v_{t_1}\overset{+}{\sim}v_{t_2}$. Now $\dot{G}=\dot{S}^2_8$ is a SRSG  with parameters $(8,6,-4,4,6)$.

  (3) Suppose that  $v_iv_j$ is a negative edge of  $\dot{G}$. If $b=5$, then two negatively adjacent vertices have five common neighbours.  We suppose $N(v_i)\bigcap N(v_j)=\{v_k, v_l, v_m, v_s, v_t\}$ and  $v_k,v_l,v_m, v_s\overset{+}{\sim}v_i, v_j$,  $v_t\overset{-}{\sim}v_i,v_j$. By the negative edge $v_jv_t$, we have $v_t\overset{+}{\sim} v_k,v_l,v_m, v_s$. For the positive edge $v_iv_k$, we get that $a=-4$ or $-3$ by Lemma \ref{le2}. This is impossible by (1) and (2).

  (4) Suppose that  $v_iv_j$ is a negative edge of  $\dot{G}$. If $b=4$, then $v_i$, $v_j$ have four common neighbours which denoted by $v_k$, $v_l$, $v_m$, $v_s$. And at most one of $\{v_k, v_l, v_m, v_s\}$ is negatively adjacent to $v_i$ and $v_j$. If all of $\{v_k, v_l, v_m, v_s\}$ are positively adjacent to $v_i$ and $v_j$, then the remaining neighbour $v_{t_1}$ of $v_i$ is negatively adjacent to $v_i$. So we have $v_{t_1}\overset{+}{\sim}v_k, v_l, v_m, v_s$ by the negative edge $v_i v_{t_1}$. For the positive edge  $v_iv_k$, we have $a=-4$ by and (1) and Lemma \ref{le2}. If one of $\{v_k, v_l, v_m, v_s\}$ is negatively adjacent to $v_i$ and $v_j$, we suppose $v_k\overset{-}{\sim}v_i,v_j$. For the negative edge $v_iv_k$, $v_k$ must be positively adjacent to  at least two of $\{v_l, v_m, v_s\}$. Without loss of generality, we suppose $v_k\overset{+}{\sim}v_l,v_m$. Then we also have $a=-4$ by the positive edge  $v_iv_l$.

\end{proof}

The following  is a classification discussion according to whether $\dot{G}$ contains triangles and whether the triangles are balanced.

\textbf{Firstly, we concern that $\dot{G}$  contains an unbalanced triangle.} There are two types of unbalanced triangles. Then first type of unbalanced triangles have only one negative edge and the second type of unbalanced triangles have three negative edges. We consider the first type now.

\begin{lemma}
  Let $\dot{G}\in \mathcal{C}_1\bigcup\mathcal{C}_4\bigcup\mathcal{C}_5$ be a connected non-complete 6-regular  and 2 net-regular SRSG. If it has an unbalanced triangle of the first type, then  $a\in \{-4,-2,-1,0\}$.
\end{lemma}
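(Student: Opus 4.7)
The plan is to bound the possible values of $a$ by counting length-$2$ walks through the positive edges of the given triangle, and then to eliminate the surviving extreme values using the auxiliary lemmas together with a rigidity argument.

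Fix an unbalanced triangle $v_iv_jv_k$ of the first type, so WLOG $v_i\overset{+}{\sim}v_j$, $v_j\overset{+}{\sim}v_k$ and $v_i\overset{-}{\sim}v_k$. On the positive edge $v_iv_j$, the vertex $v_k$ supplies a length-$2$ walk of sign $(-)(+)=-1$. Lemma \ref{le2} then forces the number $q$ of negative length-$2$ walks between $v_i$ and $v_j$ to be even and at least $2$. Writing $p$ for the positive count, $6$-regularity gives $p+q\leq 5$, so $q\in\{2,4\}$ and $a=p-q\in\{-4,-3,-2,-1,0,1\}$. Lemma \ref{6-2-a}(1) removes $a=-3$ immediately.

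The main obstacle is ruling out $a=1$. This corresponds to $p=3$, $q=2$, i.e. $|N(v_i)\cap N(v_j)|=5$; together with the net-degree constraint $d^+=4$, $d^-=2$ at $v_i,v_j$, the sign pattern on the common neighbours is forced to be $v_k\!:\!(-,+)$, $v_l\!:\!(+,-)$, $v_m,v_s\!:\!(+,+)$, $v_t\!:\!(-,-)$, and $N[v_i]=N[v_j]=S:=\{v_i,v_j,v_k,v_l,v_m,v_s,v_t\}$. I would then propagate this rigidity. The triangles $v_iv_jv_l$ and $v_iv_jv_k$ are themselves unbalanced of type $1$, so repeating the same counting argument on the positive edges $v_iv_l$ and $v_jv_k$ gives $N[v_l]=N[v_i]=S$ and $N[v_k]=N[v_j]=S$. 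For $v_m$ (and symmetrically $v_s$), whichever sign $v_kv_m$ carries, one of the triangles $v_iv_kv_m$ or $v_jv_kv_m$ is unbalanced of type $1$, yielding $N[v_m]=S$.

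For $v_t$, I would case-split on which of $v_kv_l,v_kv_m,v_kv_s,v_kv_t$ is the single negative edge (there is exactly one, since $d^-(v_k)=2$ and $v_kv_i$ is already negative). A direct computation of the five length-$2$ walks from $v_i$ to $v_k$ shows that if $v_kv_t$ is negative then $b=5$, contradicting Lemma \ref{6-2-a}(3); otherwise $v_kv_t$ is positive, the triangle $v_jv_kv_t$ has signs $(+,+,-)$ and is unbalanced of type $1$, so $N[v_t]=N[v_k]=S$. In every surviving branch, all seven vertices of $S$ share the closed neighbourhood $S$, hence $S$ induces $K_7$; each vertex already has its full six neighbours inside $S$, so connectedness forces $V(\dot{G})=S$ and $\dot{G}=K_7$, contradicting non-completeness. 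Hence $a\neq 1$, leaving $a\in\{-4,-2,-1,0\}$.

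The delicate point is the sign bookkeeping in the rigidity step: one must verify that for every choice of the single negative edge among $v_kv_l,v_kv_m,v_kv_s,v_kv_t$ there really is a nearby triangle through each of $v_m,v_s,v_t$ that is unbalanced of type $1$, so that the rigidity $N[\cdot]=S$ propagates to all seven vertices before the global $K_7$ contradiction kicks in.
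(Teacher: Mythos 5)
Your proposal is correct, and its first half coincides with the paper's: both arguments use the third vertex of the triangle to produce one negative length-$2$ walk on the positive edge $v_iv_j$, invoke Lemma \ref{le2} to force an even count of at least two such walks, deduce $a\in\{-4,-3,-2,-1,0,1\}$ from $6$-regularity, and discard $a=-3$ via Lemma \ref{6-2-a}(1). Where you diverge is in killing $a=1$. The paper stays local: having fixed the forced sign pattern on the five common neighbours, it examines the positive edges $v_iv_l$ and $v_iv_s$, propagates signs for two steps, and lands on a vertex ($v_k$ in its labelling) that would need three negative edges, contradicting $d^-=2$. You instead run the rigidity to its natural endpoint: every vertex of the seven-point set $S$ is shown to have closed neighbourhood $S$ (using that each relevant positive edge sits in a type-$1$ unbalanced triangle and hence has exactly five common neighbours, plus the $b=5$ exclusion from Lemma \ref{6-2-a}(3) to handle the one branch where no such triangle through $v_t$ is immediately available), so the graph collapses to $K_7$ and non-completeness is violated. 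Your route is longer and needs the extra input of Lemma \ref{6-2-a}(3) and the connectedness/non-completeness hypotheses, but it identifies the structural reason $a=1$ fails and makes explicit the adjacencies ($v_l\sim v_k,v_m,v_s,v_t$, etc.) that the paper's shorter computation uses only implicitly; the paper's version reaches a contradiction faster with purely local degree bookkeeping. Both are sound.
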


\begin{proof}
  Let $v_iv_jv_k$ be an unbalanced triangle of $\dot{G}$ such that $v_i\overset{+}{\sim}v_j$, $v_i\overset{-}{\sim}v_k$, $v_j\overset{+}{\sim}v_k$. Then there is another  common neighbour $v_l$ of $v_i,v_j$ such that $v_l\overset{+}{\sim}v_i$ and $v_l\overset{-}{\sim}v_j$ by Lemma \ref{le2}. So $v_i$ and $v_j$ have two negative walks of length 2. According to the vertex net-degree and result (1) of Lemma \ref{6-2-a}, we have $a\in \{-4,-2,-1,0,1\}$ by the positive edge $v_iv_j$.

  If $a=1$, then there are three positive and two negative walks of length 2 between $v_i$ and $v_j$. Let $N(v_i)\bigcap N(v_j)=\{v_k,v_l,v_m,v_s,v_t\}$ and suppose that $v_k\overset{-}{\sim}v_i$, $v_k\overset{+}{\sim}v_j$,  $v_l\overset{+}{\sim}v_i$, $v_l\overset{-}{\sim}v_j$,  $v_m\overset{-}{\sim}v_i, v_j$ and $v_s,v_t\overset{+}{\sim}v_i, v_j$.  We consider the positive edge $v_iv_l$. Since $d^{-}(v_m)=2$,   $v_l\overset{+}{\sim}v_m$. And so $v_l\overset{+}{\sim}v_s,v_t$ and  $v_k\overset{-}{\sim}v_l$.  We have $v_s$ is negatively adjacent to one of $v_k, v_m$ by the positive edge $v_iv_s$.  But this is contradictory to the negative degree of that vertex.

\end{proof}

\begin{figure}
  \centering
  \includegraphics[width=7cm]{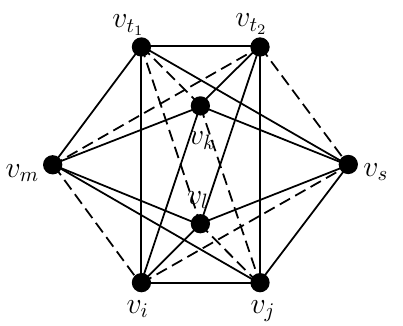}
  \caption{$\dot{S}^2_8$}\label{86-446}
\end{figure}

  If  there is an unbalanced triangle of the first type, then we have $a\in \{-4,-2,-1,0\}$ by the above Lemma and $-1\leq b\leq 4$ by Lemma \ref{le2} and Lemma \ref{6-2-a}. And by Lemma \ref{6-2-a}, we have $a=-4$ if $b=4$ and then $\dot{G}$ corresponds to the SRSG $\dot{S}^2_8$.  Then we need to consider the cases of $(a,b)=(0,3)$, $(0,2)$, $(0,1)$, $(0,0)$, $(0,-1)$, $(-1,3)$, $(-1,2)$, $(-1,1)$, $(-1,0)$, $(-1,-1)$, $(-2,3)$, $(-2,2)$, $(-2,1)$, $(-2,0)$, $(-2,-1)$. The case of $(-1,1)$ is impossible by Lemma \ref{6-2-a-1b1}.

We can rule out some cases of parameters $(a,b)$ by the following Lemmas.



\begin{lemma}
  Let $\dot{G}\in \mathcal{C}_1\bigcup\mathcal{C}_4\bigcup\mathcal{C}_5$ be a connected and non-complete 6-regular and 2 net-regular SRSG, then  $(a,b)\neq(-1,-1), (0,-1)$.
\end{lemma}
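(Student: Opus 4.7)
The plan is to use equation \eqref{6-2-abcn} to restrict the possibilities for $(c,n)$, and then exploit Lemma \ref{le2} together with the fact that each vertex has only two negative neighbours to force a rigid local structure around every negative edge, from which the contradiction should follow.

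Substituting each candidate pair into $4a+2b+c(n-7)=-2$: the pair $(0,-1)$ gives $c(n-7)=0$, which forces $c=0$ (since a non-complete 6-regular graph has $n\geq 8$); the pair $(-1,-1)$ gives $c(n-7)=4$, leaving only $(c,n)\in\{(1,11),(2,9),(4,8)\}$. Next, for any negative edge $v_iv_j$ I would split its common neighbours into the four classes $(p_{++},p_{--},p_{+-},p_{-+})$ according to the signs of the edges to $v_i$ and $v_j$. Lemma \ref{le2} gives $p_{+-}=p_{-+}=k$, so $b=-1$ yields $p_{++}+p_{--}=2k-1$, and the bound $4k-1\leq 5$ on common neighbours of two adjacent degree-$6$ vertices forces $k=1$. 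The split $(0,1,1,1)$ is excluded because the $(-,-)$-type common neighbour together with $v_j$ would give $v_i$ three distinct negative neighbours, contradicting $d^-(v_i)=2$; hence the split must be $(1,0,1,1)$. Writing the negative edge as $v_1v_2$, this yields: a unique common neighbour $v_3$ with $v_3\overset{+}{\sim}v_1,v_2$; the other negative neighbour $v_4$ of $v_1$ lies in $N(v_2)$ with $v_4\overset{+}{\sim}v_2$; and the other negative neighbour $v_5$ of $v_2$ lies in $N(v_1)$ with $v_5\overset{+}{\sim}v_1$.

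Then I would examine the positive edge $v_1v_3$. The walk $v_1\overset{-}{\sim}v_2\overset{+}{\sim}v_3$ contributes one unit to the $(-,+)$-count for this edge, so Lemma \ref{le2} together with $a\in\{0,-1\}$ and the bound $4k'+a\leq 5$ pin down the common-neighbour split on $v_1v_3$ to a single possibility; combined with $d^-(v_3)=2$ this fixes which of the remaining neighbours of $v_1$ lie in $N(v_3)$ and with which signs. Moreover, applying the $(1,0,1,1)$ rule to every negative edge of a cycle $\dots v_{i-1}v_iv_{i+1}v_{i+2}\dots$ of the $2$-regular graph $G^-$ propagates the constraint $v_{i-1}\overset{+}{\sim}v_{i+1}$, giving strong control over the positive edges incident to $G^-$.

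Closing each case, the contradiction should fall out either from overloading the positive degree of some vertex (each has only $4$ positive neighbours) or from producing a non-adjacent pair whose signed count of walks of length $2$ is inconsistent with $c$. The subcase $(0,-1)$ with $c=0$ should collapse quickly, because $c=0$ forces every non-adjacent pair to have equal numbers of positive and negative walks of length $2$, which interacts badly with the forced $(1,0,1,1)$ structure. The main obstacle I anticipate is $(a,b,c,n)=(-1,-1,1,11)$, where $n$ is largest and the $G^-$-cycle structure admits several partitions of $11$ into cycle lengths; a careful enumeration across these decompositions will likely be needed before the contradiction becomes visible.
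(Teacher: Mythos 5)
There is a genuine gap: your proposal correctly reproduces the paper's structural setup but stops short of the actual contradiction. The reduction via Lemma \ref{le2} to $k=1$ and the exclusion of the $(0,1,1,1)$ split by $d^-(v_i)=2$, landing on the $(1,0,1,1)$ configuration around every negative edge, is exactly the paper's starting point. But from there you only describe a strategy --- ``pin down the split on $v_1v_3$,'' ``propagate around the cycles of $G^-$,'' ``the contradiction should fall out either from overloading the positive degree or from the count against $c$'' --- and you explicitly defer the case $(a,b,c,n)=(-1,-1,1,11)$ to a ``careful enumeration.'' None of these steps is carried out, so no contradiction is actually derived for either pair $(a,b)$. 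A plan that ends with ``the contradiction becomes visible after enumeration'' is not a proof of the lemma.

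The missing step is also much shorter than your plan suggests, and it never uses equation \eqref{6-2-abcn}, the value of $c$, $n$, or the cycle structure of $G^-$. In your notation, let $v_1v_2$ be negative with common neighbours $v_3\overset{+}{\sim}v_1,v_2$, \ $v_5\overset{+}{\sim}v_1$, $v_5\overset{-}{\sim}v_2$, \ $v_4\overset{-}{\sim}v_1$, $v_4\overset{+}{\sim}v_2$, and let $v_s,v_t$ be the two remaining (necessarily positive) neighbours of $v_1$, which satisfy $v_s,v_t\nsim v_2$. If $a=-1$, each positive edge $v_1v_s$, $v_1v_t$ needs a negative walk of length $2$ of type $(-,+)$ by Lemma \ref{le2}; since the only negative neighbours of $v_1$ are $v_2$ and $v_4$ and $v_s,v_t\nsim v_2$, this forces $v_4\overset{+}{\sim}v_s$ and $v_4\overset{+}{\sim}v_t$, giving two positive walks of length $2$ between $v_1$ and $v_4$ and contradicting $b=-1$ (which allows exactly one). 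If $a=0$, the same argument applied first to the positive edge $v_1v_5$ (whose $(-,+)$ walk must pass through $v_4$, so $v_5\overset{+}{\sim}v_4$, and whose fourth common neighbour with $v_1$ must be one of $v_s,v_t$) and then to that edge $v_1v_s$ again forces a second positive walk $v_1\overset{+}{\sim}v_s\overset{+}{\sim}v_4$, the same contradiction. Your instinct to focus on the edge $v_1v_3$ (the $(+,+)$ common neighbour) points at the wrong edge: the leverage comes from the positive neighbours of $v_1$ that are \emph{not} adjacent to $v_2$, because for them the forced $(-,+)$ walk has nowhere to go except through $v_4$.
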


\begin{proof}
Let $v_iv_j$ be a negative edge of $\dot{G}$. If $b=-1$, then there are two negative walks and one positive walk of length 2 between $v_i$ and $v_j$. Suppose that $N(v_i)\bigcap N(v_j)=\{v_k,v_l,v_m\}$, $N(v_i)=\{v_j,v_k,v_l,v_m,v_s,v_t\}$ and $v_k\overset{+} {\sim}v_i, v_k\overset{-} {\sim}v_j$, $v_l\overset{+} {\sim}v_i, v_j$, $v_m\overset{-} {\sim}v_i, v_m\overset{+} {\sim}v_j$. Then $v_s, v_t\overset{+} {\sim}v_i$.

If $a=-1$,  it must have $v_s,v_t\overset{+} {\sim}v_m$ since $v_s,v_t\nsim v_j$ and $a=-1$.  But there are two positive walks of length 2 between $v_i$ and $v_m$, which is contradictory to $b=-1$.

If $a=0$, then there are two negative walks and two positive walks of length 2 or no walks of length 2 between two positively adjacent vertices. Vertices $v_i, v_k$ need two negative walks and one positive walk of length 2 by the positive edge  $v_iv_k$. Then it must have $v_k\overset{+} {\sim}v_m$ and $v_k$ must be adjacent to one of $\{v_s,v_t\}$. We suppose $v_k\sim v_s$ without loss of generality. Then $v_s$  must be  positively adjacent to $v_m$ by $a=0$. Now there are  two positive walks of length 2 between $v_i$ and $v_m$, which is contradictory to $b=-1$.

\end{proof}

\begin{lemma}
  Let $\dot{G}\in \mathcal{C}_1\bigcup\mathcal{C}_4\bigcup\mathcal{C}_5$ be a connected non-complete 6-regular  and 2 net-regular SRSG. If it has an unbalanced triangle of the first type,  then $(a,b)\neq(0,1), (0,2), (0,3)$.
\end{lemma}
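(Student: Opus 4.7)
The plan is to suppose for contradiction that $a=0$ and $b\in\{1,2,3\}$, and to exploit how rigidly $a=0$ pins down the common neighborhood $N(v_i)\cap N(v_j)$ under the net-degree constraint. Starting from a first-type unbalanced triangle $v_iv_jv_k$ with $v_i\overset{+}{\sim}v_j$, $v_i\overset{-}{\sim}v_k$, $v_j\overset{+}{\sim}v_k$, Lemma \ref{le2} produces a fourth common neighbor $v_l$ with $v_l\overset{+}{\sim}v_i$ and $v_l\overset{-}{\sim}v_j$, giving two negative length-$2$ walks between $v_i$ and $v_j$. Since $a=0$ demands (\# positive walks)$=$(\# negative walks), since $|N(v_i)\cap N(v_j)|\le 5$, and since the number of negative walks is even by Lemma \ref{le2}, there are exactly two further common neighbors $v_m,v_s$ that provide exactly two positive walks; denote the remaining non-common neighbors by $v_{t_1}\in N(v_i)$ and $v_{t_2}\in N(v_j)$.

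I next reduce to two sign configurations for $v_m,v_s$. The degree split $d^+=4$, $d^-=2$ at $v_i$ precludes having both of $v_m,v_s$ negatively adjacent to $v_i$ (that would give $v_i$ at least three negative neighbors $v_k,v_m,v_s$). Hence either (A) both $v_m,v_s$ are positive to both $v_i$ and $v_j$, forcing $v_{t_1}\overset{-}{\sim}v_i$ and $v_{t_2}\overset{-}{\sim}v_j$; or (B) one of them, say $v_s$, is negative to both $v_i$ and $v_j$, so that $v_{t_1}\overset{+}{\sim}v_i$ and $v_{t_2}\overset{+}{\sim}v_j$. In each case all the signs of edges between $v_i,v_j$ and their neighborhoods are determined.

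I would then propagate the constraints by applying $a=0$ and Lemma \ref{le2} to the other positive edges in the configuration, namely $v_jv_k$, $v_iv_l$, $v_iv_m$, $v_iv_s$, $v_jv_m$, $v_jv_s$. Each of these requires balanced walks of length 2, and a known walk is already forced for several of them (for example, $v_i\overset{+}{\sim}v_j\overset{+}{\sim}v_k$ contributes a positive walk on $v_jv_k$, while $v_i\overset{+}{\sim}v_j\overset{-}{\sim}v_l$ contributes a negative walk on $v_iv_l$). These conditions cascade to determine almost all adjacencies among $\{v_k,v_l,v_m,v_s,v_{t_1},v_{t_2}\}$. With this skeleton in hand, I would compute $(A_{\dot G}^2)_{ik}$ on the negative edge $v_iv_k$, and cross-check with $(A_{\dot G}^2)_{jl}$ on $v_jv_l$; for each $b\in\{1,2,3\}$, at least one of these walk counts should turn out to lie outside $\{1,2,3\}$ or the bookkeeping should force a vertex to violate $d^+=4$ or $d^-=2$.

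The main obstacle is the bookkeeping rather than any single clever step: cases (A) and (B) each branch into several sub-configurations depending on the remaining free adjacencies (in particular $v_{t_1}v_{t_2}$, $v_{t_1}v_k$, $v_{t_2}v_l$, and the edges among $\{v_k,v_l,v_m,v_s\}$). The skeleton above is uniform across the three target values of $b$, and in every sub-case the contradiction appears either as a walk-count mismatch on a fixed edge or as an excess in the negative degree of one of $v_k,v_l,v_m,v_s$. Enumerating these sub-cases carefully, rather than finding a conceptual shortcut, is what drives the argument to its conclusion.
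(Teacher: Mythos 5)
Your setup is sound and coincides with the paper's: from the first-type unbalanced triangle and Lemma \ref{le2} you correctly deduce that the positive edge $v_iv_j$ has exactly four common neighbours, two supplying the negative walks and two ($v_m,v_s$) supplying positive walks, and the constraint $d^-(v_i)=2$ correctly reduces the signs of $v_m,v_s$ to your cases (A) and (B), which are exactly the paper's Case 2 and Case 1. Up to this point nothing is missing.

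The gap is everything after that. The entire content of the lemma lies in the propagation step, and you do not carry it out; you only assert that ``at least one of these walk counts should turn out to lie outside $\{1,2,3\}$ or the bookkeeping should force a vertex to violate $d^+=4$ or $d^-=2$.'' That is a prediction about how the case analysis will end, not an argument. Moreover, the specific termination test you designate --- computing $(A_{\dot{G}}^2)_{ik}$ and $(A_{\dot{G}}^2)_{jl}$ on the two negative edges of the starting configuration --- is not where the contradictions actually arise in several branches. In the paper's Case 1 the contradiction comes from the negative edges at $v_s$ (the common neighbour negative to both $v_i$ and $v_j$): Lemma \ref{le2} applied to $v_jv_s$ forces $b\le 1$, and then $b=1$ collapses to $b=0$ via the edge $v_iv_s$. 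In Case 2 the argument must branch further on the three possible configurations of the positive edge $v_iv_m$, and these subcases are killed variously by the positive edge $v_iv_k$ failing $a=0$, by a parity violation on the negative edge $v_iv_{t_1}$, or by the edges $v_jv_{t_2}$ and $v_jv_m$; in the subcase resolved through $v_iv_{t_1}$ the contradiction appears before the adjacencies among $\{v_k,v_l,v_m,v_s,v_{t_1},v_{t_2}\}$ are fully determined, so the ``first pin down the skeleton, then count on two fixed edges'' order you propose would stall there. To complete the proof you must actually enumerate the subcases of $v_iv_m$ and, for each of $b=1,2,3$, exhibit the specific edge whose walk count or parity fails.
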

\begin{proof}
Suppose that $v_iv_jv_k$ is an unbalanced triangle of first type in $\dot{G}$ such that $v_i\overset{+} {\sim}v_j$, $v_k\overset{+} {\sim}v_i$ and $v_k\overset{-} {\sim}v_j$. We firstly concern the positive edge $v_iv_j$. Since $a=0$, $v_i$ have the other three common neighbours $v_l,v_m,v_s$ with $v_j$. We suppose that $v_l\overset{-} {\sim}v_i$ and $v_l\overset{+} {\sim}v_j$. And there are two cases for $v_m,v_s$: One vertex is positively  adjacent to  $v_i,v_j$ and the other one is negatively  adjacent to  $v_i,v_j$; Both $v_m$ and $v_s$ are positively  adjacent to  $v_i,v_j$. We suppose $v_{t_1}$ and $v_{t_2}$ are the remaining neighbours of $v_i$ and $v_j$, respectively.

Case 1. $v_m\overset{+} {\sim}v_i,v_j$ and $v_s\overset{-} {\sim}v_i,v_j$. Then $v_{t_1}\overset{+} {\sim}v_i$ and $v_{t_2}\overset{+} {\sim}v_j$. We consider the negative edge $v_jv_s$, there is one negative  walk of length 2 between $v_j$ and $v_s$. Then  there must be the other one negative  walk of length 2 between them by Lemma \ref{le2}. So we have $b\leq 1$. If $b=1$, then $v_s\overset{+} {\sim}v_k, v_l, v_m, v_{t_2}$. Now there are two positive walks and two negative walks of length 2  between $v_i$ and $v_s$. Since $d(v_s)=6$,  we have $v_s{\nsim}v_{t_1}$.   Therefore, we get that $b=0$ by edge $v_iv_s$. A contradiction.
%

Case 2. $v_m,v_s\overset{+} {\sim}v_i,v_j$.  In this case, we have $v_{t_1}\overset{-} {\sim}v_i$ and $v_{t_2}\overset{-} {\sim}v_j$. And we have $v_k, v_l\overset{-} {\nsim}v_m,v_s$ by the positive edges $v_iv_k$ and $v_jv_l$. There are three cases for the positive edge $v_iv_m$ and we show them in Figure \ref{Three subcases}.



 If $b=1$, we consider the negative edge $v_jv_k$ for Subcase 2.1 and Subcase 2.3, then we have $v_k\overset{+} {\sim} v_{t_2}$. And we have $v_k\overset{-} {\sim} v_l$, $v_k\overset{+} {\sim} v_s$ or $v_k\overset{+} {\sim} v_l$, $v_k\overset{-} {\sim} v_s$. In either case, the positive edge $v_iv_k$ cannot satisfy  $a=0$ since  there are three positive walks of length 2 between $v_i$ and $v_k$ for the first case and three negative walks of length 2 for the second case.

 We consider the negative edge $v_iv_l$ for Subcase 2.2, then we have $v_l\overset{+} {\sim} v_{t_1}$. And we have $v_l\overset{-} {\sim} v_k$, $v_l\overset{+} {\sim} v_s$ or $v_l\overset{+} {\sim} v_k$, $v_l\overset{-} {\sim} v_s$. In either case, the positive edge $v_jv_l$ cannot satisfy  $a=0$ since  there are  three positive walks of length 2 between $v_j$ and $v_l$ for the first case and three negative walks of length 2 for the second case.

If $b=2$, then there are only two positive walks of length 2 between two negatively adjacent vertices.

For Subcase 2.1: In this case, the negative edges $v_iv_l$ and $v_jv_k$ satisfy  $b=2$ and  the positive edges $v_jv_m$ and $v_iv_m$ satisfy  $a=0$. Then $v_l\nsim v_k, v_s, v_{t_1}$ and $v_k\nsim v_{t_2}, v_s$ and $v_m\nsim v_{t_1}, v_{t_2}$.
 Now we consider the positive edge $v_iv_k$, it cannot satisfy  $a=0$ by Lemma \ref{le2} since $v_k\nsim v_{t_2}, v_l, v_s$.

For Subcase 2.2: This case contradicts $b=2$, since there is one negative walk of length 2 between $v_i$ and $v_{t_1}$.

For Subcase 2.3: In this case, the negative edge $v_jv_k$ satisfies  $b=2$, then $v_k\nsim v_l, v_s, v_{t_2}$. Therefore, $v_k$ have at most three common neighbours with $v_i$. Then $v_iv_k$ cannot satisfy  $a=0$.

If $b=3$:

For Subcase 2.1: The positive edges $v_jv_m$ and $v_iv_m$ satisfy  $a=0$ in this case. So $v_m\nsim v_{t_1}, v_{t_2}$. Since $b=3$, we have $v_{t_1}\overset{+} {\sim}v_k,v_s$, $v_{t_1}\overset{-} {\sim}v_l$ and $v_{t_2}\overset{+} {\sim} v_l,v_s$, $v_{t_2}\overset{-} {\sim} v_k$. Now the negative edges $v_jv_k$ and $v_iv_l$ satisfy  $b=3$, so $v_l\nsim v_k,v_s$ and $v_k\nsim v_s$. Then there are only three common neighbours for  $v_i$ and $v_s$, which is contradictory to $a=0$.

%

For Subcase 2.2:  This case contradicts $b=3$, since there is one negative walk of length 2 between $v_i$ and $v_{t_1}$.
%
%

For Subcase 2.3: We have $v_k\overset{-} {\nsim}v_{t_1}$ and $v_k\overset{-} {\nsim}v_{l}$ by negative edges $v_iv_{t_1}$ and $v_jv_{k}$, respectively.  We consider the positive edge $v_iv_k$, then $v_k\overset{+} {\sim}v_s,v_l$ or $v_k\overset{+} {\sim}v_s,v_{t_1}$ to  satisfy  $a=0$. If $v_k\overset{+} {\sim}v_s,v_l$, then there are four positive walks of length 2 between $v_j$ and $v_k$. This contradicts  $b=3$. If $v_k\overset{+} {\sim}v_s,v_{t_1}$, the negative edge $v_jv_k$ satisfies  $b=3$. So $v_k\nsim v_l, v_{t_2}$. We  consider the negative edge $v_jv_{t_2}$, then $v_{t_2}\overset{+} {\sim}v_m,v_l, v_s$. But there will be three negative walks of length 2 between $v_j$ and $v_m$, which is contradictory to $a=0$.

\end{proof}

\begin{figure}
  \centering
    \subfigure[Subcase 2.1]{
  \includegraphics[width=5CM]{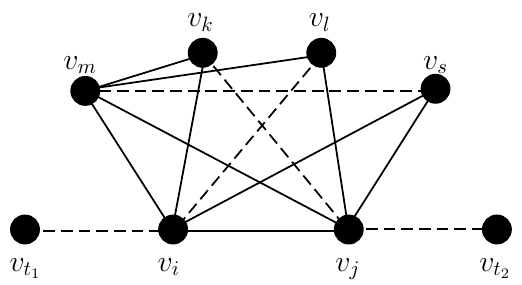}}
    \subfigure[Subcase 2.2]{
  \includegraphics[width=5CM]{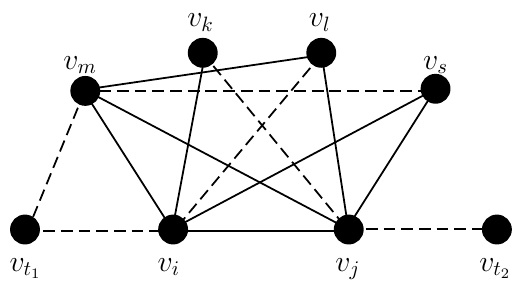}}
      \subfigure[Subcase 2.3]{
  \includegraphics[width=5CM]{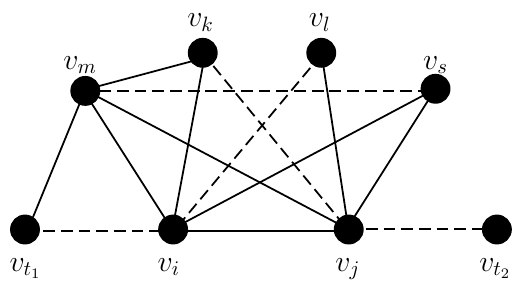}}
  \centering
  \caption{Three subcases of Case 2 for $a=0$}\label{Three subcases}
\end{figure}

\begin{lemma}
  Let $\dot{G}\in \mathcal{C}_1\bigcup\mathcal{C}_4\bigcup\mathcal{C}_5$ be a connected and non-complete 6-regular and 2 net-regular SRSG. If it has an unbalanced triangle of the first type,  then $(a,b)\neq(-2,-1), (-2,0), (-2,1)$.
\end{lemma}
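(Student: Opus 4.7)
The plan is to first pin down the local structure forced by $a=-2$ around the given unbalanced triangle, and then eliminate each value $b\in\{-1,0,1\}$ in turn.

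Let $v_iv_jv_k$ be an unbalanced triangle of the first type with $v_i\overset{+}{\sim}v_j$, $v_i\overset{-}{\sim}v_k$, $v_j\overset{+}{\sim}v_k$, and let $v_l$ be the companion common neighbor of $\{v_i,v_j\}$ forced by Lemma~\ref{le2}, so $v_l\overset{+}{\sim}v_i$ and $v_l\overset{-}{\sim}v_j$. Denote by $p^{\epsilon_1\epsilon_2}$ the number of common neighbors of $v_i,v_j$ joined to $v_i$ with sign $\epsilon_1$ and to $v_j$ with sign $\epsilon_2$. The identity $(p^{++}+p^{--})-(p^{+-}+p^{-+})=a=-2$, together with $p^{+-}=p^{-+}$ from Lemma~\ref{le2} and the bound $p^{++}+p^{--}+p^{+-}+p^{-+}\le 5$, forces $p^{+-}=p^{-+}=1$ and $p^{++}=p^{--}=0$, so $N(v_i)\cap N(v_j)=\{v_k,v_l\}$. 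Since every vertex has $d^+=4$ and $d^-=2$, I label $N^+(v_i)=\{v_j,v_l,v_{m_1},v_{m_2}\}$, $N^-(v_i)=\{v_k,v_p\}$ and symmetrically $N^+(v_j)=\{v_i,v_k,v_{n_1},v_{n_2}\}$, $N^-(v_j)=\{v_l,v_q\}$, where $v_p,v_{m_1},v_{m_2}\notin N(v_j)$ and $v_q,v_{n_1},v_{n_2}\notin N(v_i)$.

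Next I would apply the same $a=-2$ rule to the positive edges $v_iv_l$ and $v_jv_k$. In each case the existing common neighbor ($v_j$ for $v_iv_l$; $v_i$ for $v_jv_k$) carries sign pattern $(+,-)$, so exactly one further common neighbor with pattern $(-,+)$ is required. This yields the dichotomy: either (A) $v_k\overset{+}{\sim}v_l$, or (B) $v_k\nsim v_l$ together with $v_p\overset{+}{\sim}v_l$ and $v_q\overset{+}{\sim}v_k$. Iterating the $a=-2$ rule on the positive edges among $v_{m_1},v_{m_2},v_{n_1},v_{n_2},v_p,v_q$, and using the net-degree constraint at every vertex, determines the remaining adjacencies up to a handful of subcases in each of Cases A and B.

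Finally, I would split on $b$. By the analogous walk count on a negative edge, every negative edge carries exactly $3$ common neighbors when $b=-1$ (with $p^{+-}=p^{-+}=1$ and $p^{++}+p^{--}=1$), exactly $0$ or $4$ when $b=0$, and exactly $1$ or $5$ when $b=1$. These counts would be tested against the negative edges $v_iv_k$, $v_iv_p$, $v_jv_l$, $v_jv_q$ using the structure from the previous step: for each value of $b$ and each of Cases A and B, the forced adjacencies will either violate a degree or net-degree bound, contradict $a=-2$ at some further positive edge, or produce the wrong $b$-count at another negative edge. The main obstacle is the $b=1$ branch with its $5$-common-neighbor option, which requires choosing the most constrained negative edge (typically $v_jv_l$, whose common-neighborhood already contains $v_i$ and, in Case A, $v_k$) so that the large option saturates the neighborhood and propagates to a contradiction at a neighboring positive edge. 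Exhausting all subcases then yields the lemma.
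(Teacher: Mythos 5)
Your setup is correct as far as it goes: under $a=-2$ every positive edge has exactly two common neighbours, one of pattern $(+,-)$ and one of pattern $(-,+)$ (so $p^{++}=p^{--}=0$); the dichotomy (A)/(B) is right; and the common-neighbour counts $3$, $\{0,4\}$, $\{1,5\}$ for negative edges under $b=-1,0,1$ are right. But from that point on the eliminations are only asserted ("the forced adjacencies will either violate a degree bound, contradict $a=-2$, or produce the wrong $b$-count"), and the one observation that makes them go through is missing. The decisive corollary of $p^{++}=p^{--}=0$ is that a negative edge $v_uv_w$ can have \emph{no} common neighbour joined to its endpoints with opposite signs: if $v_c\overset{+}{\sim}v_u$ and $v_c\overset{-}{\sim}v_w$, then $v_w$ is a $(-,-)$ common neighbour of the positive edge $v_uv_c$, contradicting $p^{--}=0$. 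Hence under $a=-2$ every negative edge has zero negative walks of length~$2$. This kills $b=-1$ outright (three common neighbours force two negative walks), kills $b=0$ because the only surviving option is zero common neighbours while the negative edge $v_iv_k$ of your triangle has the common neighbour $v_j$, and also removes the $5$-common-neighbour option for $b=1$ that you single out as the main obstacle. This is essentially how the paper dispatches $(-2,-1)$ and $(-2,0)$ in two lines.

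The branch your proposal does not address at all is the genuinely hard one: $b=1$ with every negative edge having exactly one common neighbour, joined with equal signs to both ends. No local sign-pattern check refutes this; it needs a counting argument concentrated at one vertex. In your notation, case (A) is excluded (there the negative edge $v_iv_k$ would acquire $v_l$ as a second common neighbour), so we are in case (B) and one finds $N(v_i)\cap N(v_j)=\{v_k,v_l\}$, $N(v_i)\cap N(v_k)=\{v_j\}$, $N(v_i)\cap N(v_l)=\{v_j,v_p\}$, $N(v_i)\cap N(v_p)=\{v_l\}$. Consequently neither $v_{m_1}$ nor $v_{m_2}$ is adjacent to any of $v_j,v_k,v_l,v_p$, so the positive edge $v_iv_{m_1}$ has at most one common neighbour (namely $v_{m_2}$), contradicting the two required by $a=-2$. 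The paper's proof runs exactly this chain, starting from the negative edge of the triangle. Without this step (or an equivalent global argument) the case $(a,b)=(-2,1)$ remains open, so as written the proposal has a genuine gap.
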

\begin{proof}
Suppose $v_iv_jv_k$ is an unbalanced triangle of the first type in $\dot{G}$ such that $v_i\overset{-} {\sim}v_j$, $v_i\overset{+} {\sim}v_k$ and $v_j\overset{+} {\sim}v_k$. If $a=-2$, then there are only two negative walks of length 2 between any two positively adjacent vertices. We consider the negative edge $v_iv_j$.

If $b=-1$ or 0, then  $v_i$ and $v_j$ need  two negative walks of length 2. So we suppose $v_l,v_m\in N(v_i)\bigcap N(v_j)$ and $v_l\overset{+} {\sim}v_i$, $v_l\overset{-} {\sim}v_j$ and  $v_m\overset{-} {\sim}v_i$, $v_m\overset{+} {\sim}v_j$. But there is  one positive walk of length 2 between $v_i$ and $v_l$, which is contradictory to  $a=-2$.

If $(a,b)=(-2,1)$, then $v_i$ and $v_j$ must have only one common neighbour, denoted by $v_k$, by the above argument.  We consider the positive edge $v_iv_k$, then $v_i$ and $v_k$ need another negative walk of length 2. So we suppose  $v_l\in N(v_i)\bigcap N(v_k)$ and $v_l\overset{+} {\sim}v_i$, $v_l\overset{-} {\sim}v_k$. Now $v_i$ and $v_l$ also need another negative walk of length 2. Let $v_m$ be the negative neighbour of $v_i$. So  $v_l\overset{+} {\sim}v_m$ since  $v_l\nsim v_j$. Since positive edge $v_iv_k$ has satisfied $a=-2$,  $v_k\nsim v_m$ and so there must be only one positive walk of length 2 between $v_i$ and $v_m$.  Now the edges $v_iv_j$, $v_iv_k$, $v_iv_l$ and $v_iv_m$ satisfy that $(a,b)=(-2,-1)$. So the remaining two positive neighbours of $v_i$ have at most one common neighbour with $v_i$. This is contradictory to  $a=-2$.

\end{proof}

\begin{lemma}\label{6-b=3}
  Let $\dot{G}\in \mathcal{C}_1\bigcup\mathcal{C}_4\bigcup\mathcal{C}_5$ be a connected and non-complete 6-regular and 2 net-regular SRSG. If it has an unbalanced triangle of the first type and $b=3$, then $\dot{G}$ must have an unbalanced triangle of the second type.
\end{lemma}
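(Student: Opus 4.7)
The plan is to fix the unbalanced triangle of the first type, say $v_iv_jv_k$ with $v_i\overset{+}{\sim}v_j$, $v_j\overset{+}{\sim}v_k$, $v_i\overset{-}{\sim}v_k$, and then show that the $b=3$ common neighbours of the negative edge $v_iv_k$ must include a vertex negatively adjacent to both $v_i$ and $v_k$, since such a vertex together with $v_i,v_k$ forms a triangle with three negative edges, i.e.\ an unbalanced triangle of the second type. I would argue by contradiction: assume no such common neighbour exists, and classify the three common neighbours of $v_i,v_k$ according to the sign pair $(\sigma(v_pv_i),\sigma(v_pv_k))$. By assumption, the type $(-,-)$ is forbidden; by Lemma \ref{le2} applied to the pair $v_i,v_k$, the count of type $(+,-)$ equals that of type $(-,+)$; and $v_j$ itself is a common neighbour of type $(+,+)$. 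This leaves only two sub-configurations: either all three common neighbours are of type $(+,+)$, or the three split as one of each of $(+,+), (+,-), (-,+)$. In the latter sub-configuration, the net-degree forces $d^-(v_i)=d^-(v_k)=2$, so the $(-,+)$-common-neighbour must be the unique second negative neighbour of $v_i$, and the $(+,-)$-common-neighbour must be the unique second negative neighbour of $v_k$; this pins down a lot of structure.

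Next, I would combine this with the admissible values of $a$. From the preceding results, in the presence of an unbalanced triangle of the first type we have $a\in\{-4,-2,-1,0\}$; Lemma \ref{6-2-a}(2) forces $b=4$ when $a=-4$, and the lemma excluding $(a,b)\in\{(0,1),(0,2),(0,3)\}$ rules out $a=0$ when $b=3$. Thus under the present hypotheses $a\in\{-1,-2\}$. In each of the two sub-configurations above, I would then apply Lemma \ref{le2} to the positive edges $v_iv_j$ and $v_jv_k$, each of which already carries one known negative walk of length $2$ (through $v_k$ and $v_i$ respectively), hence must carry a second negative walk by the parity assertion of Lemma \ref{le2}. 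This forces specific edges between $v_j$ and either the other common neighbours of $v_i,v_k$ or the second negative neighbours $v_\alpha$ of $v_i$ and $v_\gamma$ of $v_k$. Tracking the signs of these newly-forced edges and then applying Lemma \ref{le2} to the positive edges out of $v_i$ to its extra positive neighbour and to each $v_p$ of type $(+,+)$ should, for $a=-1$ and $a=-2$ separately, overdetermine either the number of positive walks of length $2$ or the net-degree of some vertex.

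The main obstacle will be the sub-configuration in which all three common neighbours $v_j,v_{p_1},v_{p_2}$ of $v_iv_k$ have type $(+,+)$: here the symmetry between $v_{p_1}$ and $v_{p_2}$ creates several overlapping branches, and one has to decide for each of $v_{p_1}v_{p_2}$, $v_\alpha v_\gamma$, and the extra positive neighbours $v_\beta$ of $v_i$ and $v_\delta$ of $v_k$ whether they are adjacent and, if so, with which sign, closing off each branch by a violation of $a\in\{-1,-2\}$ or of $d^-\equiv 2$. The sub-configuration with one of each type is, by contrast, tightly constrained already at the outset (the $(+,-)$ and $(-,+)$ vertices are determined as $v_\gamma$ and $v_\alpha$), so I expect the contradiction to fall out quickly there. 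In every case the anticipated conclusion is that some vertex is forced to exceed its allotted negative degree, and hence the assumption that no second-type unbalanced triangle exists is untenable.
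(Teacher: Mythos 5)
Your overall shape is right---reduce to the negative edge of the given triangle, show its common neighbours must include one joined to both endpoints by negative edges, and treat the ``all $(+,+)$'' case as the real obstacle---but the proposal has two problems, one of bookkeeping and one that is a genuine gap. The bookkeeping issue: $b=3$ is not ``three common neighbours''; it is (positive walks) minus (negative walks) $=3$. Since Lemma \ref{le2} makes the number of negative walks even, say $2k$, and two adjacent vertices of degree $6$ have at most $5$ common neighbours, $3+4k\leq 5$ forces $k=0$. So there are exactly three common neighbours and \emph{all} of them are of same-sign type; your ``one each of $(+,+),(+,-),(-,+)$'' sub-configuration would give $b=1-2=-1$ and is vacuous from the outset, not a case that ``falls out quickly'' after further structure-chasing. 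Once this is seen, the only surviving case is: all three common neighbours of the negative edge are positively adjacent to both endpoints.

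The genuine gap is that you never actually close that surviving case; you propose applying Lemma \ref{le2} to the positive edges $v_iv_j$, $v_jv_k$, $v_iv_\beta$, $v_iv_{p}$ and ``anticipate'' that every branch violates $a\in\{-1,-2\}$ or the negative degree, which is exactly the part of the lemma that needs a proof. The paper closes it in one move that your plan does not contain: apply the $b=3$ count to the \emph{second} negative edge at $v_i$, namely $v_iv_\alpha$ where $v_\alpha$ is the remaining negative neighbour of $v_i$ (it exists since $d^-(v_i)=2$). That pair must also have exactly three common neighbours, all same-sign, drawn from the five remaining neighbours of $v_i$; two of those five ($v_k$ and $v_\beta$) cannot account for all three, so some $v_p\in\{v_j,v_{p_1},v_{p_2}\}$ satisfies $v_p\overset{+}{\sim}v_\alpha$. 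Then the positive edge $v_iv_p$ has two negative walks of length $2$ of the same orientation (through $v_k$ and through $v_\alpha$, both negative at $v_i$ and positive at $v_p$), so Lemma \ref{le2} forces two more of the opposite orientation, giving at least four negative walks and hence $a\leq -3$, which Lemma \ref{6-2-a} forbids (and $a=-4$ would force $b=4$). Without this step---or a completed substitute for it---your argument is an outline, not a proof.
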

\begin{proof}
  Let $v_iv_jv_k$ be an unbalanced triangle of $\dot{G}$ such that $v_i\overset{-}{\sim}v_j$, $v_i\overset{+}{\sim}v_k$, $v_j\overset{+}{\sim}v_k$. If $b=3$, then there are only three positive walks of length 2 between $v_i$  and $v_j$. Suppose  $N(v_i)=\{v_j,v_k,v_l,v_m,v_{s_1},v_{t_1}\}$ and $N(v_j)=\{v_i,v_k,v_l,v_m,v_{s_2},v_{t_2}\}$.   If $v_l\overset{-}{\sim}v_i,v_j$ or  $v_m\overset{-}{\sim}v_i,v_j$, the result is true obviously. Otherwise,  $v_l,v_m\overset{+}{\sim}v_i$.  Then we suppose that $v_{s_1}\overset{-}{\sim}v_i$ and $v_{t_1}\overset{+}{\sim}v_i$. For the negative edge $v_{s_1}v_i$, at least one of  $\{v_k,v_l,v_m\}$ is positively adjacent to $v_{s_1}$. Then we have $a=-4$ or $-3$ by Lemma \ref{le2}. This is impossible by Lemma \ref{6-2-a}. So it must have that one of $v_l,v_m$ is negatively adjacent to $v_i$.
\end{proof}

\begin{lemma}\label{6-(a,b)=(-2,3)(-1,3)}
  Let $\dot{G}\in \mathcal{C}_1\bigcup\mathcal{C}_4\bigcup\mathcal{C}_5$ be a connected and non-complete 6-regular and 2 net-regular SRSG. If it has an unbalanced triangle of the first type,  then $(a,b)\neq(-2,3)$. And if $(a,b)=(-1,3)$, $\dot{G}$ is isomorphic to $\dot{S}_9$ as shown in Figure \ref{S9}.
\end{lemma}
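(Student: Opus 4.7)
The plan is to attack the two assertions in turn, first reading off the local combinatorial data forced by the pair $(a,b)$ together with Lemma \ref{le2}. Substituting into equation \eqref{6-2-abcn} gives $c(n-7)=0$ in the case $(a,b)=(-2,3)$, so $c=0$ (since $n=7$ would mean $\dot{G}$ is complete), and $c(n-7)=-4$ in the case $(a,b)=(-1,3)$, leaving only $(n,c)\in\{(8,-4),(9,-2),(11,-1)\}$. In both cases a negative edge has exactly $3$ common neighbours all of which contribute a \emph{positive} walk of length $2$, so each such neighbour is either $(+,+)$ or $(-,-)$ with respect to the endpoints. For a positive edge, the $(-2,3)$ case forces exactly $2$ common neighbours, both giving negative walks, which by Lemma \ref{le2} must carry opposite mixed patterns $(+,-)$ and $(-,+)$; the $(-1,3)$ case forces $3$ common neighbours, one giving a positive walk and the other two giving negative walks of opposite mixed patterns.

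For the first claim I would anchor on the first-type unbalanced triangle $v_iv_jv_k$ with $v_i\overset{+}{\sim}v_j$, $v_j\overset{+}{\sim}v_k$, $v_i\overset{-}{\sim}v_k$. The walk $v_i$--$v_j$--$v_k$ has sign $+{\cdot}+=+$, contributing one of the three required positive $2$-walks on the negative edge $v_iv_k$; the other two common neighbours, say $v_p,v_q$, must then each be jointly positive or jointly negative to $v_i$ and $v_k$. Simultaneously the positive edge $v_iv_j$ admits only $2$ common neighbours, both giving negative walks. Since $v_k$ already serves as one such common neighbour (pattern $(-,+)$), Lemma \ref{le2} forces exactly one further common neighbour with the opposite pattern $(+,-)$; the symmetric statement holds at $v_jv_k$. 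Lemma \ref{6-b=3} moreover guarantees an unbalanced triangle of the second type inside $\dot{G}$, adding a rigid triple of negative edges. I would then track adjacencies between $\{v_p,v_q\}$ and the mixed-pattern vertices forced at $v_iv_j$ and $v_jv_k$, and show that the degree budget at one of the central vertices (either $r=6$ or the net-degree $2$) cannot be met without violating the prescribed common-neighbour count, producing the desired contradiction.

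For the second claim the strategy is identical but the richer count at positive edges (one positive plus two negative walks per edge) just suffices to propagate the local picture globally. Starting again from the triangle $v_iv_jv_k$, I would determine the remaining common neighbours along its three sides, identify the full neighbourhood of $v_i$, and then use the constraints at each edge incident to $v_i$ together with Lemma \ref{6-b=3} (which inserts an unbalanced second-type triangle) to pin down the adjacencies among these neighbours and the signs on them. Iterating outward and checking consistency with the parameter list rules out $n=8$ and $n=11$, leaves $n=9$ and $c=-2$, and shows that the resulting labelled signed graph is isomorphic to $\dot{S}_9$. The main obstacle is keeping the case analysis in step (ii) under control: each positive edge has three possible mixed/positive split orderings among its common neighbours and the branches multiply quickly, so I expect the key to taming the bookkeeping is to exploit the second-type triangle given by Lemma \ref{6-b=3} as an early rigid sub-configuration on which every further branch either closes or collapses.
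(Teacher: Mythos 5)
Your setup is sound: the walk-count bookkeeping you extract from $(a,b)$ together with Lemma \ref{le2} (a negative edge has exactly three common neighbours, all on positive $2$-walks of type $(+,+)$ or $(-,-)$; a positive edge has exactly two negative $2$-walks of opposite mixed type, plus one positive $2$-walk when $a=-1$), and the arithmetic $c(n-7)=0$ resp.\ $c(n-7)=-4$ from \eqref{6-2-abcn}, all match the paper. But the proposal stops exactly where the proof has to start. For the first claim you announce that ``tracking adjacencies'' will violate a degree budget at a central vertex, without exhibiting the violation; that is not a proof, and the mechanism you name is not the one that actually closes the case. The paper's key step is this: since $a\in\{-2,-1\}$ forces two negative $2$-walks on \emph{every} positive edge, and the only negative neighbours of $v_i$ are $v_j$ and the common $(-,-)$-neighbour $v_l$ supplied by Lemma \ref{6-b=3} (you invoke that lemma but never use \emph{where} the second-type triangle sits, namely on the negative edge $v_iv_j$ itself), each of the four private positive neighbours $v_{s_1},v_{t_1},v_{s_2},v_{t_2}$ must be positively joined to $v_l$. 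Then the non-adjacent pair $(v_{s_1},v_j)$ acquires two negative $2$-walks of the same mixed type, which Lemma \ref{le2} inflates to four, forcing $c\le -2$. This contradicts $c=0$ for $(-2,3)$ and, crucially, also discards the $(11,6,-1,3,-1)$ parameter set for $(-1,3)$ --- a reduction your outline has no means of performing, since nothing in it connects the local structure back to $c$.

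For the second claim the gap is wider still: ``iterating outward \dots rules out $n=8$ and $n=11$ \dots and shows the resulting labelled signed graph is isomorphic to $\dot{S}_9$'' is a statement of the goal, not an argument. The paper excludes $n=8$ because the underlying graph would be a strongly regular graph with parameters $(8,6,3,4)$, which fails \eqref{srg}; identifies the underlying graph for $n=9$ as $K_{3,3,3}$ by checking the four $6$-regular graphs on $9$ vertices against the common-neighbour counts implied by $(a,b,c)=(-1,3,-2)$; and only then propagates signs (the $-K_2$'s inside each colour class, the matching of mixed patterns across classes) to pin down $\dot{S}_9$ up to isomorphism. None of these three steps appears in your proposal, and the last one is where the real work of a uniqueness claim lives. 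As written, the proposal is a correct framing plus a promissory note for both conclusions.
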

\begin{proof}
Let $v_iv_jv_k$ be an unbalanced triangle of $\dot{G}$ such that $v_i\overset{-}{\sim}v_j$, $v_i\overset{+}{\sim}v_k$, $v_j\overset{+}{\sim}v_k$. Suppose  $N(v_i)=\{v_j,v_k,v_l,v_m,v_{s_1},v_{t_1}\}$ and $N(v_j)=\{v_i,v_k,v_l,v_m,v_{s_2},v_{t_2}\}$. By the Lemma \ref{6-b=3}, one of $v_l,v_m$ is negatively adjacent to $v_i$. So we suppose $v_l\overset{-}{\sim}v_i,v_j$ and $v_m\overset{+}{\sim}v_i,v_j$. If $a=-2$ or $a=-1$,  there must be two negative walks of length 2 between two positively adjacent vertices. Then we have $v_{s_1},v_{t_1}, v_{s_2},v_{t_2}\overset{+}{\sim}v_l$ by the positive edges $v_iv_{s_1}$, $v_iv_{t_1}$, $v_jv_{s_2}$ and $v_jv_{t_2}$. Then we can get that $c\leq-2$ by vertices $v_{s_1}$ and $v_j$. By the equation \eqref{6-2-abcn}, we have  $c=0$ if $(a,b)=(-2,3)$. A contradiction.

Now we consider $(a,b)=(-1,3)$. The possible parameters sets are $(11,6,-1,3,-1)$, $(9,6,-1,3,-2)$ and $(8,6,-1,3,-4)$ by equation \eqref{6-2-abcn}. Since $c\leq-2$, we concern  $(9,6,-1,3,-2)$ and $(8,6,-1,3,-4)$. If  $\dot{G}$ has parameters $(8,6,-1,3,-4)$,  two adjacent vertices in  underlying graph $G$ have three common neighbours and  two non-adjacent vertices have four common neighbours. Then $G$ is a SRG with parameters $(8,6,3,4)$. But such SRG does not exist by equation \eqref{srg}.  If  $\dot{G}$ has parameters $(9,6,-1,3,-2)$,  two adjacent vertices in  $G$ have three common neighbours and  two non-adjacent vertices have two or six common neighbours. We can get that there is only one 6-regular graph with order 9 satisfies these conditions, i.e. $K_{3,3,3}$, by checking all the 6-regular graphs with order 9 (Figure \ref{9-6}). Suppose $v_1,v_2,v_3$ are three vertices of $K_{3,3,3}$ and they are not adjacent to each other. Since $c=-2$, there are two positive and four negative walks of length 2 between $v_1$ and $v_2$. So we suppose  $v_4,v_5\overset{+}{\sim}v_1$,  $v_4,v_5\overset{-}{\sim}v_2$,  $v_6,v_7\overset{-}{\sim}v_1$,  $v_6,v_7\overset{+}{\sim}v_2$ and  $v_8,v_9\overset{+}{\sim}v_1, v_2$. Since $v_3\nsim v_1,v_2$, we have $v_3\overset{+}{\sim}v_6,v_7$ and  $v_3\overset{+}{\sim}v_4,v_5$ and  $v_3\overset{-}{\sim}v_8,v_9$. Now, there are three positive walks of length 2 between $v_4$ and $v_5$. So do vertices $v_6$ and $v_7$, $v_8$ and $v_9$. Since $(a,b,c)= (-1,3,-2)$, there are one positive and two negative walks of length 2 between two positively adjacent vertices and three positive walks of length 2 between two negatively adjacent vertices. Therefore, $v_4\overset{-} {\sim}v_5$, $v_6\overset{-}{\sim}v_7$ and $v_8\overset{-}{\sim}v_9$. Concern the positive edge $v_1v_4$, vertices $v_1$ and $v_4$ need one positive walk and one negative walk of length 2. Then, we have $v_6\overset{+} {\sim}v_4$ or $v_7\overset{+}{\sim}v_4$, and $v_8\overset{+}{\sim}v_4$ or $v_9\overset{+}{\sim}v_4$. We discuss the case of $v_6\overset{+}{\sim}v_4$ and $v_8\overset{+}{\sim}v_4$ in details. Since $v_4{\sim}v_6,v_8$, we have $v_4{\sim}v_7,v_9$. By $c=-2$, we have $v_5,v_8\overset{+} {\sim}v_7$ and $v_5,v_6\overset{+} {\sim}v_9$. Therefore, $\dot{G}$  is isomorphic to $\dot{S}_9$. By the same way, we also  get the same result for the remaining three cases.
\end{proof}

\begin{figure}
  \centering
  \includegraphics[width=7cm]{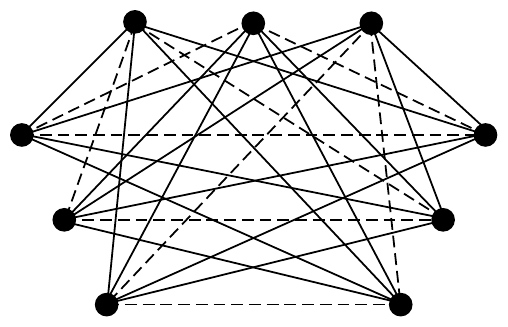}
  \caption{$\dot{S}_9$ with $(9,6,-1,3,-2)$}\label{S9}
\end{figure}

\begin{remark}
The data of the regular graphs is obtained from the following website and \cite{M}:

http://www.mathe2.uni-bayreuth.de/markus/reggraphs.html\#CRG
\end{remark}

 Let $\dot{G}\in \mathcal{C}_1\bigcup\mathcal{C}_4\bigcup\mathcal{C}_5$ be a connected non-complete 6-regular and 2 net-regular SRSG.  For the cases of $(a,b)=(0,0)$, $(-1,0)$,$(-1,2)$ and $(-2,2)$,  we have the corresponding parameters sets are $(9,6,0,0,-1)$, $(8,6,0,0,-2)$, $(9,6,-1,0,1)$, $(8,6,-1,0,2)$, $(9,6,-1,2,-1)$, $(8,6,-1,2,-2)$,  $(9,6,-2,2,1)$, $(8,6,-2,2,2)$ by the equation \eqref{6-2-abcn}.

If $\dot{G}$ has parameters $(9,6,0,0,-1)$ or $(9,6,-1,2,-1)$,  $c=-1$ implies that there are one positive and two negative walks of length 2 between any two non-adjacent vertices in $\dot{G}$ by Lemma \ref{le2}. Then any two non-adjacent vertices in underlying graph ${G}$ have three common neighbours. There are four 6-regular graphs with order 9 (Figure \ref{9-6}). It is easy to check that none of them satisfies this condition.

\begin{lemma}
 Let $\dot{G}\in \mathcal{C}_1\bigcup\mathcal{C}_4\bigcup\mathcal{C}_5$ be a connected and non-complete 6-regular and 2 net-regular SRSG.  If $\dot{G}$ has parameters $(9,6,-1,0,1)$, then $\dot{G}=\dot{S}^1_9$ (see in Figure \ref{(9,6,-1,0,1)}).
\end{lemma}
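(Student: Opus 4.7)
The plan is to combine parametric constraints with a short enumeration of $6$-regular graphs on $9$ vertices. First I would determine the number of common neighbours for each type of pair. Since each vertex of a $6$-regular graph on $9$ vertices has exactly two non-neighbours, any adjacent pair shares at least $3$ common neighbours and any non-adjacent pair shares at least $5$. Combining these bounds with Lemma \ref{le2} and the parity it imposes on the number of negative length-$2$ walks, the triple $(a,b,c)=(-1,0,1)$ forces exactly three regimes: every positive edge has $3$ common neighbours with $(1,2)$ positive/negative length-$2$ walks; every negative edge has $4$ common neighbours with $(2,2)$ positive/negative length-$2$ walks; and every non-adjacent pair has exactly $5$ common neighbours with $(3,2)$ positive/negative length-$2$ walks.

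Second I would enumerate the possible underlying graphs. The complement of $G$ is a $2$-regular graph on $9$ vertices, hence a disjoint union of cycles of total length $9$; up to isomorphism these are $\overline{C_9}$, $\overline{C_3\cup C_6}$, $\overline{C_4\cup C_5}$ and $K_{3,3,3}$, the four graphs of Figure \ref{9-6}. A direct count rules out three of them: in $K_{3,3,3}$ and $\overline{C_3\cup C_6}$ some non-adjacent pair has $6$ common neighbours (violating the required value $5$), while in $\overline{C_4\cup C_5}$ a pair of vertices at distance $2$ in the $C_4$-component of the complement is adjacent in $G$ and shares $5$ common neighbours, violating the admissible set $\{3,4\}$ for adjacent pairs. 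Hence $G=\overline{C_9}$.

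Third I would pin down the sign function. Label the vertices $v_1,\ldots,v_9$ cyclically so that $v_iv_j\in E(G)$ iff the $\mathbb{Z}_9$-distance between $i$ and $j$ belongs to $\{2,3,4\}$. A short computation on $\overline{C_9}$ shows that edges joining vertices at $C_9$-distance $2$ have exactly $4$ common neighbours, while edges at distance $3$ or $4$ have only $3$. Since a negative edge must carry $4$ common neighbours, the two distance-$2$ edges at each vertex must be its two negative edges, and all other edges must be positive. This assignment yields exactly $9$ negative edges, matching the number forced by net-regularity, so the sign function is uniquely determined up to relabelling.

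Finally, a direct verification on one representative pair of each type, using the $\mathbb{Z}_9$-rotational symmetry that preserves the signing, confirms that the resulting signed graph realises $(9,6,-1,0,1)$ and hence coincides with $\dot{S}^1_9$ of Figure \ref{(9,6,-1,0,1)}. The main obstacle is the bookkeeping in the candidate-elimination step, where one must exhibit in three of the four $6$-regular graphs on $9$ vertices a pair whose common-neighbour count lies outside the admissible ranges $\{3,4\}$ for adjacent pairs and $\{5\}$ for non-adjacent pairs; once $G=\overline{C_9}$ is isolated, both the signing and the final verification are essentially forced.
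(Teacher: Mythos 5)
Your proof is correct and follows essentially the same route as the paper: derive the forced common-neighbour counts (3 for positive edges, 4 for negative edges, 5 for non-adjacent pairs) from Lemma \ref{le2} together with the degree bounds on $9$ vertices, isolate the unique admissible $6$-regular graph of order $9$, and read off the sign of each edge from its common-neighbour count. Your version is somewhat more self-contained — you enumerate the four candidates as complements of $2$-regular graphs, identify $G_9$ explicitly as $\overline{C_9}$ with concrete witnesses eliminating the other three, and verify that the forced signing actually realises $(9,6,-1,0,1)$ — whereas the paper delegates these checks to Figure \ref{9-6} and a ``simple verification''.
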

\begin{proof}
  If $\dot{G}$ has parameters $(9,6,-1,0,1)$, $c=1$ implies that two non-adjacent vertices in $G$ have one or five common neighbours and $(a,b)=(-1,0)$ implies that two adjacent vertices in $G$ have three or four common neighbours by Lemma \ref{le2}. Only the graph $G_9$  in Figure \ref{(9,6,-1,0,1)} satisfies these conditions by simple verification for all 6-regular graphs with order 9. If two adjacent vertices in  $G_9$ have three (four, respectively) common neighbours, then the corresponding edge in $\dot{G}$ is positive (negative, respectively). So we get two signed graphs $\dot{S}^1_9$ in  Figure \ref{(9,6,-1,0,1)}.
\end{proof}

\begin{figure}
  \centering
  \subfigure[$G_9$]{\includegraphics[width=5CM]{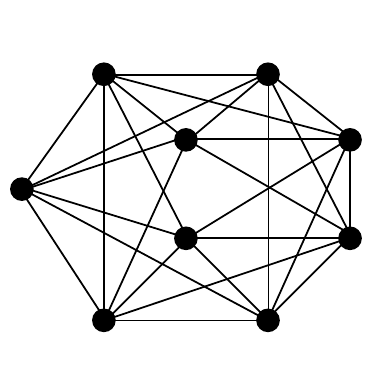}}
   \subfigure[$\dot{S}^1_9$]{\includegraphics[width=5.2cm]{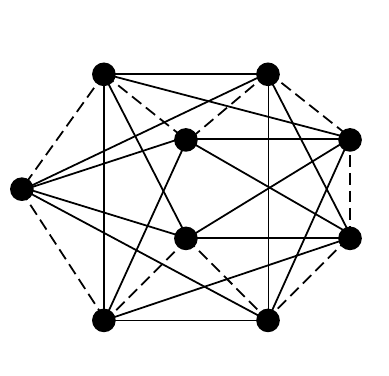}}
  \caption{ $G_9$ and $\dot{S}^1_9$ }\label{(9,6,-1,0,1)}
\end{figure}

\begin{lemma}
 Let $\dot{G}\in \mathcal{C}_1\bigcup\mathcal{C}_4\bigcup\mathcal{C}_5$ be a connected non-complete 6-regular and 2 net-regular SRSG.  If $\dot{G}$ has parameters $(8,6,0,0,-2)$, then $\dot{G}$ is isomorphic to  $\dot{S}^3_8$ (see in Figure \ref{8600-2}).
\end{lemma}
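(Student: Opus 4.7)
The plan is to exploit the rigid structure forced by the small order and the parameter values. Since $G$ is $6$-regular on $8$ vertices, its complement $\overline{G}$ is $1$-regular and hence a perfect matching $M$; so $G$ is the cocktail-party graph $K_{2,2,2,2}$. Because $\dot{G}$ is $2$-net-regular, $G^-$ is a $2$-regular spanning subgraph, so it is a disjoint union of cycles of total length $8$, leaving only three candidates: $C_8$, $C_5\cup C_3$, and $C_4\cup C_4$. I aim to rule out the last two, then show that the matching $M$ inside the surviving $C_8$ must be antipodal, so that $\dot{G}$ is uniquely determined and coincides with $\dot{S}^3_8$.

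The main tool is a disjointness statement: for any matching pair $\{u,u'\}\in M$ the six common neighbours of $u,u'$ in $G$ yield $P+N=6$ walks of length $2$, and $P-N=-2$ by $c=-2$, so $P=2$ and $N=4$; applying Lemma \ref{le2} with $2k=4$ then forces $N^-(u)\cap N^-(u')=\emptyset$. Using this, $C_5\cup C_3$ is discarded immediately: any two vertices of the $5$-cycle share at least one negative neighbour, so every vertex of $C_5$ would have to be matched with a vertex of $C_3$, which is impossible since $|C_5|\ne|C_3|$.

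To eliminate $C_4\cup C_4$, I pick two opposite vertices $v_i,v_j$ in one $C_4$; they have the same negative neighbourhood (namely the remaining two vertices of that $C_4$), so by the disjointness they cannot be a matching pair, and $v_iv_j$ is therefore a positive edge of $\dot{G}$. Its four common neighbours in $G$ are these two shared negative neighbours, contributing walks of sign $(-)(-)=+$, and two vertices of the other $C_4$, which, being positive neighbours of both $v_i$ and $v_j$, contribute walks of sign $(+)(+)=+$. This gives $P-N=4\ne 0=a$, contradicting the parameter $a=0$.

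Hence $G^-=C_8$, say $v_1v_2\cdots v_8v_1$. The disjointness condition then forces the partner of $v_i$ in $M$ to satisfy $|i-j|\in\{3,4,5\}$ modulo $8$. The main computational step, and the most delicate obstacle, is to rule out the non-antipodal matchings: I plan to test the parameter $a=0$ on a positive edge such as $v_1v_3$ under each candidate matching (after reducing by rotation), computing directly the walks of length $2$ through their four common neighbours in the style used by the preceding lemmas. Only the antipodal matching $\{(v_i,v_{i+4}):1\le i\le 4\}$ passes, so $\dot{G}$ is uniquely determined up to isomorphism; a routine verification that this signed graph realises the parameters $(8,6,0,0,-2)$ identifies it with $\dot{S}^3_8$ of Figure \ref{8600-2}.
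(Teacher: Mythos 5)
Your route is genuinely different from the paper's and, in outline, sound. The paper starts from the unique $6$-regular graph on $8$ vertices (the cocktail-party graph $G_8$) and performs a sign-by-sign case analysis around a negative edge, using Lemma \ref{le2} and net-degrees to whittle six cases for one non-adjacent pair down to two, then two more for a second pair, finally exhibiting two signed graphs and an explicit isomorphism between them. You instead push the global structure first: $\overline{G}$ is a perfect matching $M$, $G^-$ is a $2$-regular spanning subgraph, and the count $P=2$, $N=4$ for a matched pair together with Lemma \ref{le2} yields the clean disjointness statement $N^-(u)\cap N^-(u')=\emptyset$. This kills $C_5\cup C_3$ and $C_4\cup C_4$ quickly (your $C_4\cup C_4$ argument, producing four positive walks on a positive edge, is correct), and reduces the whole problem to classifying perfect matchings of $C_8$ whose pairs have cyclic distance $3$ or $4$. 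That is a more conceptual reduction than the paper's and leaves a much smaller final search space.

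The gap is that this final step is announced rather than carried out: you assert that ``only the antipodal matching passes'' without exhibiting a violating edge for each non-antipodal candidate. There are several such matchings up to rotation, e.g.\ $\{1,4\},\{2,6\},\{3,7\},\{5,8\}$ and $\{1,5\},\{2,7\},\{3,6\},\{4,8\}$, and your proposed test edge $v_1v_3$ does not suffice uniformly: for the matching $\{1,5\},\{2,7\},\{3,6\},\{4,8\}$ the edge $v_1v_3$ does satisfy $a=0$ (two positive and two negative walks), and one must instead examine, say, $v_1v_6$, which gives one positive and three negative walks. The enumeration is short and does succeed, but as written the decisive computation --- and hence the uniqueness claim --- is missing. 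You also still owe the check that the antipodal configuration satisfies all three of $a=0$, $b=0$, $c=-2$ and coincides with $\dot{S}^3_8$, which you correctly flag as routine.
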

\begin{proof}
  Suppose $\dot{G}$ has parameters $(8,6,0,0,-2)$.  There is only one 6-regular graph with order 8 ($G_8$ in Figure \ref{8-6.864-4-6}), in which any two adjacent vertices have four common neighbours and any two non-adjacent vertices have six common neighbours. So there are  two positive and two negative walks of length 2 between any two adjacent vertices and two positive and four negative walks of length 2 between any two non-adjacent vertices in $\dot{G}$. Let $v_1v_2$ be a negative edge of $\dot{G}$ and $N(v_1)\bigcap N(v_2)=\{v_3,v_4,v_5,v_6\}$. Suppose that $v_3\overset{+}{\sim}v_1$, $v_3\overset{-}{\sim}v_2$, $v_4\overset{-}{\sim}v_1$, $v_4\overset{+}{\sim}v_2$ and $v_5,v_6\overset{+}{\sim}v_1,v_2$ and $v_7\overset{+}{\sim}v_1$,  $v_8\overset{+}{\sim}v_2$. For the positive edge $v_1v_3$, it must have $v_3\overset{+}{\sim}v_4$. We concern two non-adjacent vertices $v_2$ and $v_7$. Then it must have $v_3\overset{+}{\sim}v_7$ and two of $\{v_4,v_5,v_6,v_8\}$ are positively adjacent to $v_7$ and the remaining two vertices are negatively adjacent to $v_7$. Then we have six cases: $v_4,v_5\overset{-}{\sim}v_7$, $v_6,v_8\overset{+}{\sim}v_7$ or $v_4,v_6\overset{-}{\sim}v_7$, $v_5,v_8\overset{+}{\sim}v_7$ or $v_4,v_8\overset{-}{\sim}v_7$, $v_5,v_6\overset{+}{\sim}v_7$ or $v_5,v_6\overset{-}{\sim}v_7$, $v_4,v_8\overset{+}{\sim}v_7$ or $v_5,v_8\overset{-}{\sim}v_7$, $v_4,v_6\overset{+}{\sim}v_7$ or $v_6,v_8\overset{-}{\sim}v_7$, $v_4,v_5\overset{+}{\sim}v_7$. There are three positive walks, four positive walks, three negative walks of length 2 between $v_1$ and $v_7$ for the first and second cases, the third case, the fourth case, respectively. They contradict $a=0$. So we only show the Cases 5,6 in Figure \ref{8600-2(0)}.

  For these two cases, we consider two non-adjacent vertices $v_1$ and $v_8$. Then it must have $v_4\overset{+}{\sim}v_8$. For the other one negative walk of length 2, these two cases have three possibilities ($v_3\overset{-}{\sim}v_8$ or $v_5\overset{-}{\sim}v_8$ or $v_6\overset{-}{\sim}v_8$), respectively. Then we also have six subcases. But if $v_3\overset{-}{\sim}v_8$, then there are four positive walks of length 2 for Cases 5 and 6. If  $v_5\overset{-}{\sim}v_8$ for Case 5 and $v_6\overset{-}{\sim}v_8$ for Case 6, there are four positive walks of length 2 between  $v_7$ and $v_8$. So it remains two feasible subcases as shown in  Figure \ref{8600-2(0)}. We have $v_3\overset{-}{\sim}v_5$ or $v_3\overset{-}{\sim}v_6$ for these two subcases since the vertex $v_3$ need one negative neighbour. But in fact it must be $v_3\overset{-}{\sim}v_6$ for the first subcase. Otherwise, $v_3\overset{-}{\sim}v_5$ will lead to that there are three negative walks of length 2 between $v_1$ and $v_5$, which is contradictory to $a=0$. Therefore, we also have $v_4\overset{-}{\sim}v_5$ and $v_5\overset{+}{\sim}v_6$ for the first subcase by the net-degree of vertices $v_4$ and $v_5$. In a similar way, we get that $v_3\overset{-}{\sim}v_5$, $v_4\overset{-}{\sim}v_6$ and $v_5\overset{+}{\sim}v_6$ for the second subcase. So we obtain two SRSGs which shown in Figure \ref{8600-2-4}. It is easy to see that $\dot{G}^1_8$ is isomorphic to $\dot{G}^2_8$  (The isomorphic mapping is $f$: $V(\dot{G}^1_8)\rightarrow\dot{G}^2_8$ satisfies that $f(v^1_i)=v^2_i$, $i\neq5,6$, $f(v^1_5)=v^2_6$, $f(v^1_6)=v^2_5$). We choose the signed graph $\dot{G}^1_8$ and denote it by $\dot{S}^3_8$ (Figure \ref{8600-2}).

\end{proof}

\begin{figure}
  \centering
  \subfigure[The Cases 5,6 for vertices $v_2$ and $v_7$]{
  \includegraphics[width=8cm]{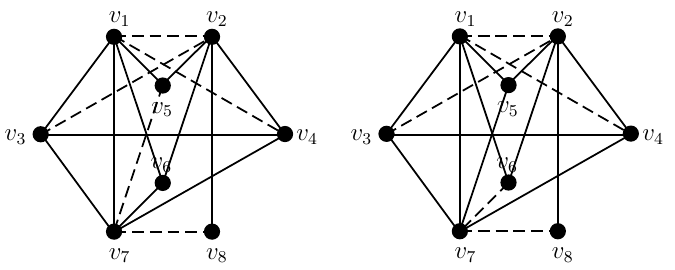}}
  \subfigure[The two feasible cases for vertices $v_1$ and $v_8$]{
  \includegraphics[width=8cm]{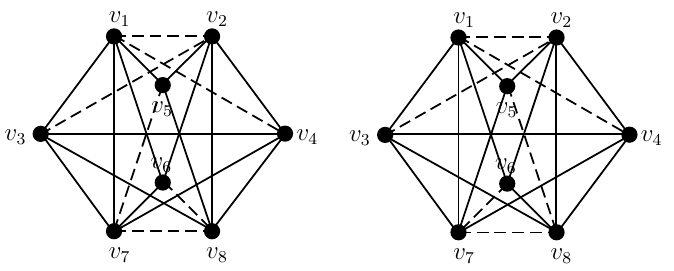}}
  \caption{About $(8,6,0,0,-2)$}\label{8600-2(0)}
\end{figure}
\begin{figure}
  \centering
  \subfigure[$\dot{G}^1_8$]{
  \includegraphics[width=3.7cm]{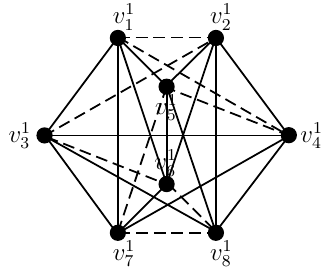}}
    \subfigure[$\dot{G}^2_8$]{
  \includegraphics[width=3.7cm]{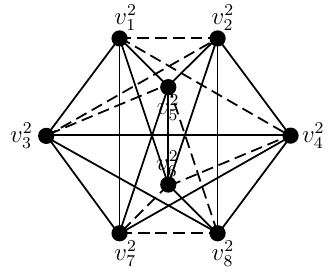}}
  \caption{SRSGs $\dot{G}^1_8$ and $\dot{G}^2_8$}\label{8600-2-4}
\end{figure}

\begin{figure}
  \centering
  \includegraphics[width=5cm]{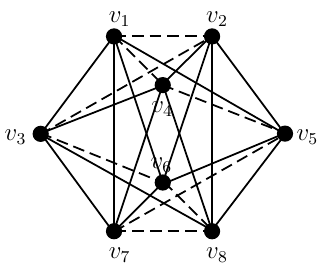}
  \caption{ SRSG $\dot{S}^3_8$ with $(8,6,0,0,-2)$}\label{8600-2}
\end{figure}

If $\dot{G}$ has parameters $(8,6,-1,0,2)$ or $(8,6,-1,2,-2)$, there exist two adjacent vertices which have three common neighbours in underlying graph since $a=-1$. But $G_8$ cannot satisfy the condition.

If $\dot{G}$ has parameters $(9,6,-2,2,1)$ or $(8,6,-2,2,2)$,  any two adjacent vertices in the underlying graph have two common neighbours by $(a,b)=(-2,2)$.  All 6-regular graphs with order 8 and 9 cannot satisfy this condition by some simple verification.

 \textbf{Now, we consider that $\dot{G}$ contains an unbalanced triangle of the second type.}

\begin{lemma}
  Let $\dot{G}\in \mathcal{C}_1\bigcup\mathcal{C}_4\bigcup\mathcal{C}_5$ be a connected and non-complete 6-regular and 2 net-regular SRSG. If $\dot{G}$ has an unbalanced triangle of the second type, then $b=1$ with $0\leq a\leq 3$ or  $b=3$ with $a=-1,-2$.
\end{lemma}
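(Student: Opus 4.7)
The plan is to exploit the fact that a second-type unbalanced triangle $v_iv_jv_k$ has all three edges negative; since $d^-(v)=2$ for every vertex of $\dot{G}$, the two negative slots of each triangle vertex are consumed by the other two triangle vertices, forcing every other neighbor of $v_i,v_j,v_k$ to be positive. Setting $A=N^+(v_i)$, $B=N^+(v_j)$, $C=N^+(v_k)$ (each of size $4$), any common neighbor of two triangle vertices other than the third must be a positive neighbor of both and hence lies in the appropriate pairwise intersection. The walk via the third triangle vertex has sign $(-)(-)=+$ and every walk via an element of the pairwise intersection has sign $(+)(+)=+$, so every length-$2$ walk between two triangle vertices is positive. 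This yields $b=1+|A\cap B|$, and by the symmetric treatment of the three pairs, $|A\cap B|=|A\cap C|=|B\cap C|=b-1$.

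Next I would bound $b$. Lemma~\ref{6-2-a}(3) excludes $b=5$. If $b=4$, Lemma~\ref{6-2-a}(4) forces $a=-4$ and then Lemma~\ref{6-2-a}(2) identifies $\dot{G}$ with the graph $\dot{S}^{2}_{8}$; a direct inspection of its construction shows its negative subgraph is the disjoint union of two $4$-cycles and hence triangle-free, so $\dot{S}^{2}_{8}$ admits no second-type unbalanced triangle. To exclude $b=2$ I would apply Lemma~\ref{le2} to the non-adjacent pair $(v_i,v_1)$ with $v_1\in B\cap C\setminus A$: the walks through $v_j$ and $v_k$ are both of ``neg-pos'' type, so the matching ``pos-neg'' count forces $v_1$ to have exactly two negative neighbors in $A$. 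Combining this with the analogous constraints on the unique vertices of $A\cap B\setminus C$ and $A\cap C\setminus B$, with the $6$-regularity of each vertex, and with equation~\eqref{6-2-abcn}, the only integer solutions $(c,n)$ either violate the structural lower bound on $n$ or contradict Lemma~\ref{le2} applied to any vertex outside $\{v_i,v_j,v_k\}\cup A\cup B\cup C$.

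For the case $b=1$, the sets $A,B,C$ are pairwise disjoint, so no element $a_p\in A$ is adjacent to $v_j$ or $v_k$. Hence for any positive edge $v_iv_{a_p}$ all common neighbors of its endpoints lie in $A\setminus\{a_p\}$, giving at most three length-$2$ walks and so $a\leq 3$. Moreover, a ``neg-pos'' walk $v_i\overset{-}{\sim}x\overset{+}{\sim}v_{a_p}$ would need $x\in\{v_j,v_k\}$ to be a neighbor of $v_{a_p}$, which is impossible. Lemma~\ref{le2} therefore forces the ``pos-neg'' count to be zero as well, so no negative walks occur and $a$ equals the number of positive walks, giving $0\leq a\leq 3$.

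For the case $b=3$, each positive edge $v_iv_{a_p}$ admits ``neg-pos'' walks only through $v_j$ (when $a_p\in B$) and through $v_k$ (when $a_p\in C$), yielding a count of $[a_p\in B]+[a_p\in C]$. Lemma~\ref{le2} forces the number of negative edges from $a_p$ inside $A$ to equal this count, and a summation over $a_p\in A$ (using $|A\cap B|=|A\cap C|=2$) shows there are exactly two negative edges inside $A$. Writing $p(a_p)$ for the number of positive edges from $a_p$ in $A$, one obtains $p(a_p)=a+2([a_p\in B]+[a_p\in C])$; combined with the constraint that $p(a_p)$ plus the count of negative edges from $a_p$ in $A$ is at most $3$, this yields $a\leq 0$, while $p(a_p)\geq 0$ yields $a\geq -2$. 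The remaining obstacle is to rule out $a=0$, which I expect to handle by the same kind of argument as for $b=2$: substituting $(a,b)=(0,3)$ into \eqref{6-2-abcn}, enumerating the integer pairs $(c,n)$ compatible with the structural lower bound on $n$, and deriving a contradiction from Lemma~\ref{le2} applied to a suitable non-adjacent pair. The main technical obstacle is executing this last parity/counting argument carefully; everything else is bookkeeping on common neighborhoods and sign products.
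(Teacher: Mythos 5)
Your bookkeeping framework is sound and in places cleaner than the paper's: the observation that every non-triangle neighbour of $v_i,v_j,v_k$ is positive, the identity $b=1+|A\cap B|$, the disposal of $b\in\{4,5\}$ via Lemma~\ref{6-2-a} together with the fact that the negative subgraph of $\dot{S}^2_8$ is two $4$-cycles, the complete treatment of $b=1$, and the counting identity $p(a_p)=a+2([a_p\in B]+[a_p\in C])$ for $b=3$ are all correct. However, the proposal has two genuine gaps, and they are precisely the exclusions that carry the content of the lemma. First, $b=2$ is never actually ruled out: after extracting a constraint on a vertex of $B\cap C\setminus A$ (whose existence already presupposes $A\cap B\cap C=\emptyset$, which you do not establish there), you assert that combining it with \eqref{6-2-abcn} ``either violates the structural lower bound on $n$ or contradicts Lemma~\ref{le2}'' without any derivation. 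Second, the case $(a,b)=(0,3)$ is explicitly left open with the same unexecuted plan, and that plan cannot succeed as stated: for $(a,b)=(0,3)$ equation \eqref{6-2-abcn} gives $c(n-7)=-8$, hence $n\in\{9,11,15\}$ once $n=8$ is discarded, while the only lower bound your configuration supplies is $n\ge|\{v_i,v_j,v_k\}\cup A\cup B\cup C|=9$, which eliminates nothing. Some further structural argument is indispensable.

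The paper closes both holes with short local arguments rather than parameter enumeration. For $b=2$ it takes the unique common positive neighbour $v_l$ of $v_i,v_j$, notes that the negative walk through $v_j$ forces a second negative walk and hence $a\le 1$, and then eliminates $a=-2,-1$ (all three remaining positive neighbours of $v_i$ would have to be positively adjacent to $v_k$, violating $b=2$ on the edge $v_iv_k$), $a=0$ and $a=1$ one by one. For $(a,b)=(0,3)$ it shows that $v_{s_1},v_{t_1},v_{s_2},v_{t_2}$ must all be adjacent to $v_l$ and $v_m$, and that the positive edge $v_iv_l$ then forces $v_l\sim v_m$, giving $d(v_l)=7$. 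It is worth noting that your own identity already suffices for $b=2$ if pushed one step further: a vertex of $A\cap B\cap C$ would produce four negative walks to $v_i$ and hence $a\le-3$, so $A\cap B\cap C=\emptyset$; the two vertices of $A\setminus(B\cup C)$ then give $a=p(a_p)\ge0$, the vertex of $A\cap B$ gives $a\le 0$, so $a=0$ and $\sum_{a_p\in A}p(a_p)=4$, forcing two positive edges inside $A$ both of whose endpoints lie in the two-element set $\{x_B,x_C\}$ --- impossible. As written, though, the proposal does not prove the lemma.
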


\begin{proof}
  Suppose that $v_iv_jv_k$ is  the second type of unbalanced triangle of $\dot{G}$. Then $v_i\overset{-} {\sim}v_j$, $v_i\overset{-} {\sim}v_k$ and $v_k\overset{-} {\sim}v_j$.

 We consider the negative edge $v_iv_j$. Both $v_i$ and  $v_j$ have four positive neighbours. Then it is obvious that $1\leq b\leq5$.

  If $b=4$ or $5$, then there is at least one common positive neighbour of $v_i$ and  $v_j$, denoted by $v_l$,  is positively adjacent to $v_k$. Since $v_l\overset{+} {\sim}v_i,v_j$, there are two negative walks of length 2 between $v_l$ and $v_i$. By the situation of these two walks and Lemma \ref{le2}, $v_l$ and $v_i$ must have the other  two negative walks of length 2. So  $a=-4$ or $-3$.  But we have $a\neq-3$ and the SRSG with $a=-4$ contains no unbalanced triangle of second type by Lemma \ref{6-2-a}.

  If $b=3$, suppose $N(v_i)\bigcap N(v_j)=\{ v_k,v_l, v_m\}$ and $v_{s_1},v_{t_1}\in N(v_i)$,  $v_{s_2},v_{t_2}\in  N(v_j)$. Then $v_l, v_m\overset{+} {\sim}v_i,v_j$ and $v_{s_1},v_{t_1}\overset{+} {\sim}v_i$ and $v_{s_2},v_{t_2}\overset{+} {\sim}v_j$. To avoid the cases of  $a=-4$ and $-3$, we have $v_k\nsim v_l, v_m$. Then it must have $v_k\overset{+} {\sim}v_{s_1},v_{t_1}, v_{s_2},v_{t_2}$ by $b=3$. So there must be two negative walks of length 2 between two positively adjacent vertices in this case. Therefore, we have $-2\leq a\leq1$ since $a\neq-4,-3$. If $a=0$, then both $v_{s_1}$ and $v_{t_1}$ have four common neighbours with $v_i$ and both $v_{s_2}$ and $v_{t_2}$ have four common neighbours with $v_j$. Therefore,  $v_{s_1},v_{t_1}, v_{s_2},v_{t_2}\sim v_l, v_m$ and $v_{s_1}\sim v_{t_1}$ and $v_{s_2}\sim v_{t_2}$. We consider the positive edge $v_iv_l$, then $v_l$ have four common neighbours with $v_i$. Then  $v_l\sim v_m$ since  $v_k\nsim v_l, v_m$ by $b=3$. But $d(v_l)=d(v_m)=7$ now, a contradiction.  If $a=1$, then
   both $v_{s_1}$ and $v_{t_1}$ have five common neighbours with $v_i$. But this is impossible since they are not adjacent to  $v_j$.

  If $b=2$, we suppose the remaining common neighbour of $v_i$ and  $v_j$ is $v_l$ and $v_m,v_s,v_t$ are the remaining positive neighbours of $v_i$. For the positive edge $v_iv_l$, there is one negative walk of length 2 between $v_i$ and $v_l$. Then they must have another one by Lemma \ref{le2}. So we have $a\leq 1$ but $a\neq-4,-3$ by Lemma \ref{6-2-a}. If $a=-2$ or $-1$, then there must be two negative walks of length 2 between any two positively adjacent vertices. Then $v_m,v_s,v_t\overset{+} {\sim}v_k$ since  $v_m,v_s,v_t\nsim v_j$. This is  contradictory to $b=2$ by negative edge $v_iv_k$.  If $a=0$, then $v_i$ and $v_l$ must have two negative and two positive walks of length 2. We could suppose that $v_l \overset{-} {\sim}v_m$ and $v_l\overset{+} {\sim} v_s,v_t$ if $v_l\nsim v_k$.  Then $v_i$  needs two negative and two positive walks of length 2 between $v_i$ and its every positive neighbour. So $v_m,v_s,v_t\overset{+} {\sim}v_k$. But this is  contradictory to $b=2$. If  $v_l\sim v_k$, then it must have $v_l\overset{+} {\sim} v_k$ since $d^-(v_k)=2$. But it will have $a\leq-3$ by positive edge $v_iv_l$. A contradiction.   If $a=1$, then there must be two negative and three positive walks of length 2 between $v_i$ and $v_l$. So $v_l$ is negatively adjacent to  $v_k$ by Lemma \ref{le2}.  Then we have  $d^-(v_k)=3$. A contradiction.

  If $b=1$, then $v_k$ is not adjacent to the remaining positive neighbours of $v_i$. So we have  $0\leq a\leq 3$.

\end{proof}

The possible values of $(a,b)$ we need to concern are $(-2,3)$, $(-1,3)$, $(0,1)$, $(1,1)$, $(2,1)$ and $(3,1)$ by the above Lemma.

If $(a,b)=(-2,3)$ or $(-1,3)$, we can get that   $\dot{G}$ must contain the first type of unbalanced triangles from the proof of above Lemma. So it  is isomorphic to $\dot{S}_9$ with parameters $(9,6,-1,3,-2)$ by the Lemmas \ref{6-b=3} and \ref{6-(a,b)=(-2,3)(-1,3)}. Now we consider the remaining values of $(a,b)$.

\begin{lemma}\label{6-2-b=1}
  Let $\dot{G}\in \mathcal{C}_1\bigcup\mathcal{C}_4\bigcup\mathcal{C}_5$ be a connected and non-complete 6-regular and 2 net-regular SRSG. If $\dot{G}$ has an unbalanced triangle of the second type and $b=1$, then $n\geq 15$.
\end{lemma}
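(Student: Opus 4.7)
The plan is to exploit the rigidity imposed by the triangle of second type together with the net-degree constraint. Since $\dot{G}$ is $6$-regular and $2$ net-regular, every vertex has exactly $d^{+}=4$ positive and $d^{-}=2$ negative neighbours. If $v_iv_jv_k$ is an all-negative triangle, then the two negative neighbours of $v_i$ are already exhausted by $\{v_j,v_k\}$; the same is true for $v_j$ and $v_k$. This is the structural observation that drives everything.

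First I would apply Lemma \ref{le2} to the negative edge $v_iv_j$. Writing the number of negative walks of length $2$ between $v_i$ and $v_j$ as $2k$, the lemma produces $k$ common neighbours that are negatively adjacent to $v_i$ and positively adjacent to $v_j$. But the only negative neighbours of $v_i$ are $v_j$ and $v_k$, and $v_k$ is joined to $v_j$ by a negative edge, not a positive one. Hence $k=0$, so all common neighbours of $v_i$ and $v_j$ contribute positive walks of length $2$; since $b=1$, there is exactly one such common neighbour, which must be $v_k$. Thus $N(v_i)\cap N(v_j)=\{v_k\}$. The identical argument applied to the negative edges $v_iv_k$ and $v_jv_k$ gives $N(v_i)\cap N(v_k)=\{v_j\}$ and $N(v_j)\cap N(v_k)=\{v_i\}$.

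From here the conclusion is a counting step. The positive neighbourhoods $A=N^{+}(v_i)$, $B=N^{+}(v_j)$, $C=N^{+}(v_k)$ each have cardinality $4$ and, since the only common neighbours among triangle vertices are the triangle vertices themselves, any vertex in $A\cap B$, $A\cap C$ or $B\cap C$ would violate the pairwise common-neighbour computation just established. Moreover, none of $A,B,C$ can contain $v_i$, $v_j$ or $v_k$, because elements of these sets are \emph{positive} neighbours of triangle vertices while the triangle edges are negative. Therefore the three positive neighbourhoods are pairwise disjoint and disjoint from $\{v_i,v_j,v_k\}$, giving $n\ge 3+4+4+4=15$.

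There is really no serious obstacle: the only thing to be careful about is ruling out the possibility of additional negative walks of length $2$ between pairs of triangle vertices, and this is immediate from the observation that every negative incidence at $v_i$, $v_j$, $v_k$ is consumed inside the triangle itself.
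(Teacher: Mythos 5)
Your proof is correct and follows essentially the same route as the paper: the all-negative triangle exhausts the negative degree of each of its vertices, so Lemma \ref{le2} together with $b=1$ forces each pair of triangle vertices to have only the third one as a common neighbour, whence the three positive neighbourhoods of size $4$ are pairwise disjoint and disjoint from the triangle, giving $n\ge 3+4+4+4=15$. The paper's own proof is just a terser statement of this argument; your version merely makes the use of Lemma \ref{le2} and the disjointness bookkeeping explicit.
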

\begin{proof}
   Suppose that $v_iv_jv_k$ is the unbalanced triangle of the second type of $\dot{G}$. Then $v_i\overset{-} {\sim}v_j$, $v_i\overset{-} {\sim}v_k$ and $v_k\overset{-} {\sim}v_j$.
   We consider the negative edge $v_iv_j$. Since  $b=1$, the positive neighbours of $v_i$ and $v_j$ are not adjacent to $v_k$. And the positive neighbours of $v_k$ are also not adjacent to $v_i$ and $v_j$. Then $n\geq 15$.
\end{proof}

If $(a,b)=(0,1)$, we have $c(n-7)=-4$ by equation \eqref{6-2-abcn}. Then $n=11,9,8$. So there is no SRSG with $(a,b)=(0,1)$ by the Lemma \ref{6-2-b=1}.

\begin{lemma}
  Let $\dot{G}\in \mathcal{C}_1\bigcup\mathcal{C}_4\bigcup\mathcal{C}_5$ be a connected and non-complete 6-regular and 2 net-regular SRSG. If $(a,b)=(1,1)$, then $\dot{G}$ is isomorphic to $\dot{S}_{15}$ (Figure \ref{(15,6,1,1,-1)}) and has parameters $(15,6,1,1,-1)$.
\end{lemma}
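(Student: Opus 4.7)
The plan is as follows. First I would substitute $(a,b)=(1,1)$ into equation \eqref{6-2-abcn} to get $c(n-7)=-8$, which together with $n \geq 15$ from Lemma \ref{6-2-b=1} forces $n=15$ and $c=-1$. So the parameter set is locked in as $(15,6,1,1,-1)$, and the remaining work is purely structural: show that every such signed graph is isomorphic to $\dot{S}_{15}$.

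Next I would exploit the hypothesis that $\dot{G}$ contains an unbalanced triangle of the second type. Let $v_1v_2v_3$ be such a triangle, all edges negative. Since every vertex of $\dot{G}$ has positive degree $4$ and negative degree $2$, each $v_i$ has exactly four positive neighbours outside the triangle. Because $b=1$ and the triangle already supplies the unique common neighbour for each of the three negative edges $v_iv_j$, the sets $N^+(v_1)$, $N^+(v_2)$, $N^+(v_3)$ must be pairwise disjoint; together with $\{v_1,v_2,v_3\}$ this accounts for all $15$ vertices. Write $U=N^+(v_1)=\{u_1,u_2,u_3,u_4\}$, $W=N^+(v_2)$, $X=N^+(v_3)$. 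Applying $a=1$ to each positive edge $v_1u_i$, the unique common neighbour of $v_1$ and $u_i$ must lie in $U$ (since $u_i\not\sim v_2,v_3$), which forces the induced subgraph on $U$ to be a positive perfect matching; symmetrically for $W$ and $X$.

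Then I would analyze the cross-edges. Each $u_i$ has $4$ remaining neighbours, distributed between $W$ and $X$. Computing $a^{(2)}_{u_i,v_2}=c=-1$ (a non-adjacent pair) by splitting the walks through $v_1$ and through the $W$-neighbours of $u_i$ yields $\sum_{w\sim u_i,\,w\in W}\sigma(u_iw)=0$, so the number of $W$-neighbours of $u_i$ is even and the positive count equals the negative count. The same holds for $X$-neighbours. Ruling out the configurations $(4,0)$ and $(0,4)$ (e.g., if $u_i$ has no neighbour in $X$, then $u_i$ and $v_3$ share only $v_1$ as a common neighbour, contradicting $c=-1$ which requires $3$ common neighbours), I would conclude each $u_i$ has exactly one positive and one negative neighbour in each of $W,X$; symmetry gives the analogous statement for $W$- and $X$-vertices. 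Hence the positive and negative cross-edges each decompose into three perfect matchings across the three parts.

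Finally, I would fix notation by choosing a positive $U$-matching $u_1u_2,\,u_3u_4$ and similar for $W,X$, then use the equations $a=1$ (on positive cross-edges), $b=1$ (on negative cross-edges) and $c=-1$ (on non-adjacent cross pairs) to pin down the six matchings between the parts. The main obstacle is this last step: the combinatorial case analysis of how the three positive and three negative cross-matchings must interact to satisfy all three entry-counts simultaneously, and verifying that every consistent choice is equivalent under relabelling to the one depicted in $\dot{S}_{15}$. I expect to invoke Lemma \ref{le2} repeatedly to force the pairing of positive/negative partners across parts, and to use the symmetry group permuting $\{v_1,v_2,v_3\}$ and the internal matching edges to absorb the apparent freedom, thereby showing uniqueness up to isomorphism.
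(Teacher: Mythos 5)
Your opening step is right: substituting $(a,b)=(1,1)$ into \eqref{6-2-abcn} gives $c(n-7)=-8$, and $n\geq 15$ from Lemma \ref{6-2-b=1} forces $(n,c)=(15,-1)$. After that, however, there is a genuine gap at the foundation of your structural analysis. With $r=6$, the entry condition $a=1$ (and likewise $b=1$) admits \emph{two} walk decompositions between adjacent vertices: one positive walk of length $2$ and no negative walks, \emph{or} three positive and two negative walks (hence five common neighbours). Every subsequent claim you make --- that $v_3$ is the \emph{unique} common neighbour of $v_1$ and $v_2$, hence that $N^+(v_1),N^+(v_2),N^+(v_3)$ are pairwise disjoint and partition the remaining $12$ vertices, and that each positive edge $v_1u_i$ has a unique common neighbour forcing a positive perfect matching inside $U$ --- silently assumes the first alternative. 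The paper devotes two full paragraphs to excluding the second alternative (for negative edges it leads via Lemma \ref{le2} to $a\leq-3$; for positive edges to $b\leq-3$), and only then concludes that every triangle is $+K_3$ or $-K_3$ and that the underlying graph is the strongly regular graph $GQ(2,2)$ with parameters $(15,6,1,3)$. Without this exclusion your partition $\{v_1,v_2,v_3\}\cup U\cup W\cup X$ is not established. (Your cross-edge computation $a^{(2)}_{u_i,v_2}=-1$ forcing two neighbours of $u_i$ in each of $W$ and $X$, one of each sign, is correct \emph{given} that partition, and is a nice alternative to the paper's direct identification of $G$ with $GQ(2,2)$.)

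The second shortfall is that the part you describe as ``the main obstacle'' --- showing that the six cross-matchings can be completed in essentially one way --- is precisely where the paper's real work lies, and you leave it as an expectation rather than an argument. The paper carries this out by fixing a $-K_3$ in $GQ(2,2)$, propagating signs edge by edge using the net-degree and the constraints $a=b=1$, $c=-1$, and then exhibiting explicit isomorphisms ($f_1,\dots,f_5$) among the six signed graphs that arise from the residual choices. A complete proof along your lines would need an equivalent verification that every admissible interaction of the three positive and three negative cross-matchings is equivalent under relabelling to $\dot{S}_{15}$; appealing to ``the symmetry group permuting $\{v_1,v_2,v_3\}$'' is not yet an argument, since a priori inequivalent sign patterns could share that symmetry. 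So the route is plausibly workable and genuinely different in flavour from the paper's, but as written it both rests on an unproved dichotomy and omits the decisive uniqueness computation.
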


\begin{proof}
  If $(a,b)=(1,1)$, then there are two cases for the walks of length 2 between two adjacent vertices: The first is there is only one positive walk of length 2; The second is there are three positive and two negative walks of length 2. Let $v_iv_j$ be an edge of $\dot{G}$ and there are three positive and two negative walks of length 2 between $v_i$ and $v_j$. We suppose $N(v_i)\bigcap N(v_j)=\{v_k,v_l,v_m,v_s,v_t\}$. If edge $v_iv_j$ is negative, then we suppose $v_k,v_l,v_m\overset{+} {\sim}v_i,v_j$ and $v_s\overset{+} {\sim}v_i,v_s\overset{-} {\sim}v_j$ and $v_t\overset{-} {\sim}v_i,v_t\overset{+} {\sim}v_j$. Now we consider the negative edge $v_iv_t$. Since there is one negative walk of length 2 between $v_i$ and $v_t$,  there must be another  negative walk and three positive walks of length 2. Therefore, at least one of $v_k,v_l,v_m,v_s$ is positively adjacent to $v_t$. We suppose $v_l\overset{+} {\sim}v_t$ without loss of generality. Then there are two  negative walks of length 2 between $v_i$ and $v_l$. Since these two common neighbours are joined to $v_i$ by negative edges and joined to $v_l$ by positive edges,  there are another two negative walks of length 2  between $v_i$ and $v_l$ by Lemma \ref{le2}. So we have $a\leq -3$, which contradicts $a=1$. Then there is only  one positive walk of length 2 between two negatively adjacent vertices in $\dot{G}$.

  If $v_iv_j$ is positive, we suppose $v_k,v_l\overset{+} {\sim}v_i,v_j$,\; $v_m\overset{-} {\sim}v_i,v_j$,\; $v_s\overset{+} {\sim}v_i,v_s\overset{-} {\sim}v_j$ and $v_t\overset{-} {\sim}v_i,v_t\overset{+} {\sim}v_j$. Now we consider the positive edge $v_iv_s$. There must be another one negative walk and three positive walks of length 2 between $v_i$ and $v_s$. Therefore, $v_k,v_l,v_m,v_t\sim v_k$. For vertex $v_m$,  there are two negative walks of length 2 between $v_m$ and $v_j$ if $v_m\overset{+} {\sim}v_s$. This is contradictory to $b=1$. If $v_m\overset{-} {\sim}v_s$, then there are two negative walks of length 2 between $v_m$ and $v_i$. According to the situation of these two walks, we get that $b\leq -3$ by Lemma \ref{le2}. But this also contradicts $b=1$. Therefore, there is only  one positive walk of length 2 between any two positively adjacent vertices in $\dot{G}$.

   To sum up, there is only  one positive walk of length 2 between any two adjacent vertices in $\dot{G}$ and the corresponding triangle must be $+K_3$ or $-K_3$.

   By equation \eqref{6-2-abcn} and Lemma \ref{6-2-b=1}, we only need to consider the parameters $(15,6,1,1,-1)$.  $c=-1$ implies that there is only one positive walk and  two negative walks of length 2 between any two non-adjacent vertices in $\dot{G}$ by Lemma \ref{le2} and $r=6$. Since there is only  one positive walk of length 2 between any two adjacent vertices, the underlying graph of $\dot{G}$ is a strongly regular graph with parameters $(15,6,1,3)$, i.e. $GQ(2,2)$ (Figure \ref{(15,6,1,1,-1)}). We suppose
   $v_1v_2v_{12}$ is a $-K_3$ in $\dot{G}$. Since $d^- (v_1)=d^-(v_2)=d^-(v_{12})=2$,  edges $v_1v_7$, $v_1v_9$, $v_1v_ {10}$, $v_1v_{15}$, $v_2v_3$, $v_2v_5$, $v_2v_6$, $v_2v_{14}$, $v_{12}v_4$,  $v_{12}v_8$, $v_{12}v_{11}$, $v_{12}v_{13}$ are positive. Therefore, edges $v_3v_{14}$, $v_4v_{13}$, $v_5v_6$,  $v_7v_{15}$, $v_8v_{11}$, $v_9v_{10}$ are also positive.  We consider two non-adjacent vertices $v_1$ and $v_3$. Vertices $v_7$ and $v_{10}$ are their common neighbours. Without loss of generality, we suppose $v_{10}\overset{-} {\sim}v_3$ and $v_7\overset{+} {\sim}v_3$. Therefore, edges $v_4v_{10}$ and $v_3v_4$ are negative and edges $v_3v_{8}$ and $v_7v_8$ are positive. By the vertex ne-degree, we get that edges $v_7v_6$, $v_7v_ {13}$, $v_8v_5$ and $v_8v_9$ are negative. And according to $b=1$, we have edges $v_6v_{13}$ and $v_5v_9$ are negative. Then by the net-degree of vertices $v_5$ and $v_6$, we obtain that edges $v_5v_4$, $v_5v_{15}$, $v_6v_{10}$ and $v_6v_{11}$ are positive. Therefore, edges $v_4v_ {15}$ and $v_{10}v_{11}$ are positive. By vertex  $v_{15}$, we get that triangle $v_ {11}v_{14}v_{15}$ is a $-K_3$. And the edges of triangle $v_9v_ {13}v_{14}$ are positive by the net-degree of $v_{14}$. Then we get the signed graph $\dot{S}_{15}$ in Figure \ref{(15,6,1,1,-1)}. If $v_{10}\overset{+} {\sim}v_3$ and $v_7\overset{-} {\sim}v_3$, then we get the signed graph $\dot{G}^1_{15}$ which is isomorphic to $\dot{S}_{15}$. The isomorphic mapping is $f_1:V(\dot{S} _{15})\rightarrow V(\dot{G}^1_{15})$ which satisfies that
\begin{align*}
   & f_1(v_1(\dot{S} _{15}))=v_{11}(\dot{G}^1_{15}),f_1(v_2(\dot{S} _{15}))=v_{10}(\dot{S} _{15}-1), f_1(v_3(\dot{S} _{15}))=v_{3}(\dot{G}^1_{15}) \\
   & f_1(v_4(\dot{S} _{15}))=v_{7}(\dot{G}^1_{15}), f_1(v_5(\dot{S} _{15}))=v_{1}(\dot{G}^1_{15}), f_1(v_6(\dot{S} _{15}))=v_{9}(\dot{G}^1_{15}) \\
   & f_1(v_7(\dot{S} _{15}))=v_{14}(\dot{G}^1_{15}), f_1(v_8(\dot{S} _{15}))=v_{2}(\dot{G}^1_{15}), f_1(v_9(\dot{S} _{15}))=v_{12}(\dot{G}^1_{15}) \\
   &f_1(v_{10}(\dot{S} _{15}))=v_{8}(\dot{G}^1_{15}), f_1(v_{11}(\dot{S} _{15}))=v_{5}(\dot{G}^1_{15}), f_1(v_{12}(\dot{S} _{15}))=v_{6}(\dot{G}^1_{15}),  \\
   & f_1(v_{13}(\dot{S} _{15}))=v_{13}(\dot{G}^1_{15}), f_1(v_{14}(\dot{S} _{15}))=v_{4}(\dot{G}^1_{15}), f_1(v_{15}(\dot{S} _{15}))=v_{1}(\dot{G}^1_{15}).
\end{align*}

\begin{figure}
  \centering
  \subfigure[$GQ(2,2)$ ]{
  \includegraphics[width=6cm]{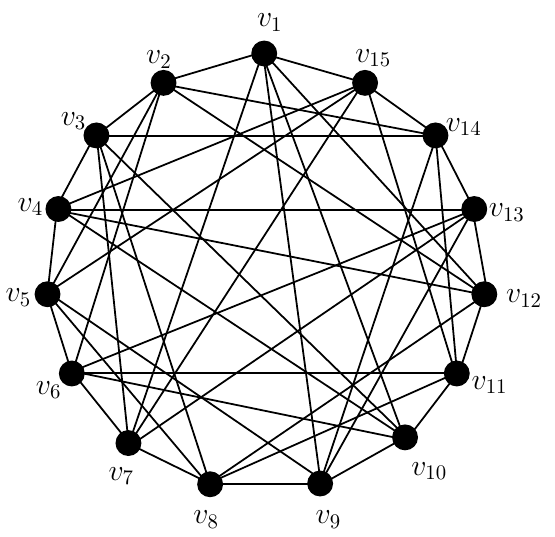}}
    \subfigure[$\dot{S}_{15}$]{
  \includegraphics[width=6cm]{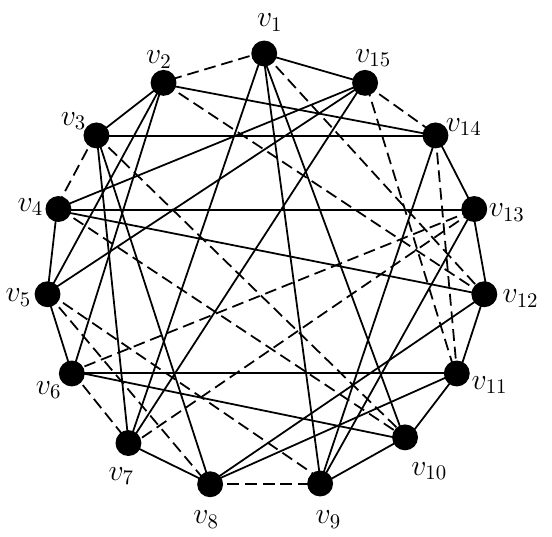}}
  \caption{$GQ(2,2)$ and $\dot{S}_{15}$}\label{(15,6,1,1,-1)}
\end{figure}

\begin{figure}
  \centering
  \subfigure[$\dot{G}^1_{15}$ ]{
  \includegraphics[width=4.5cm]{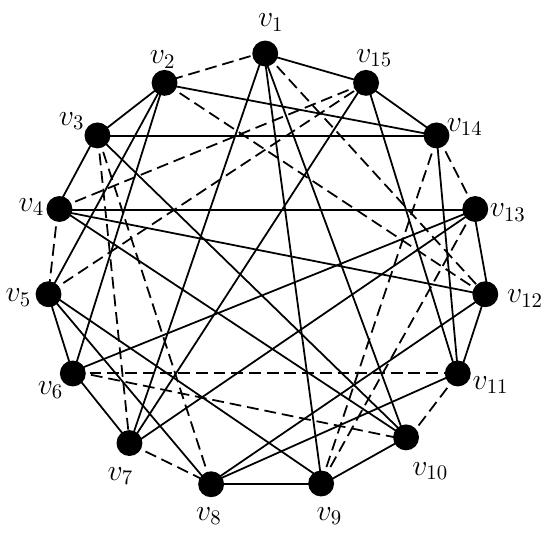}}
    \subfigure[$\dot{G}^2_{15}$]{
  \includegraphics[width=4.5cm]{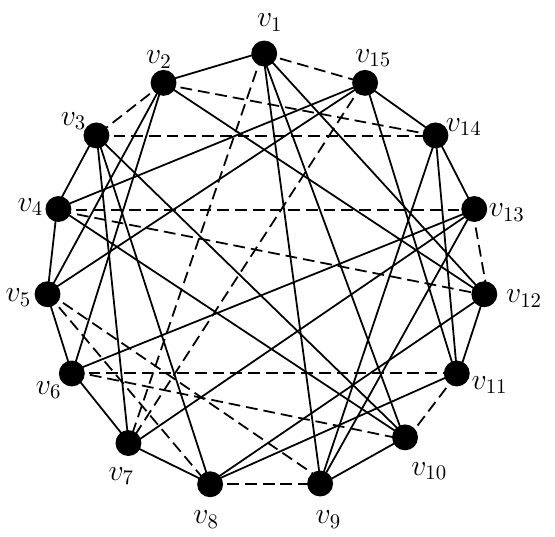}}
    \subfigure[$\dot{G}^3_{15}$ ]{
  \includegraphics[width=4.5cm]{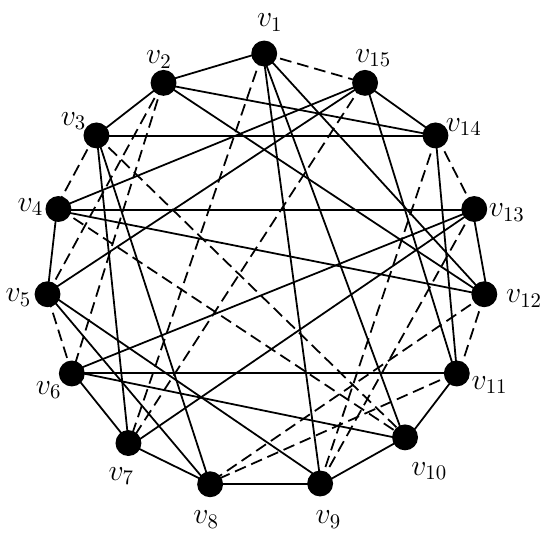}}
    \subfigure[$\dot{G}^4_{15}$]{
  \includegraphics[width=4.5cm]{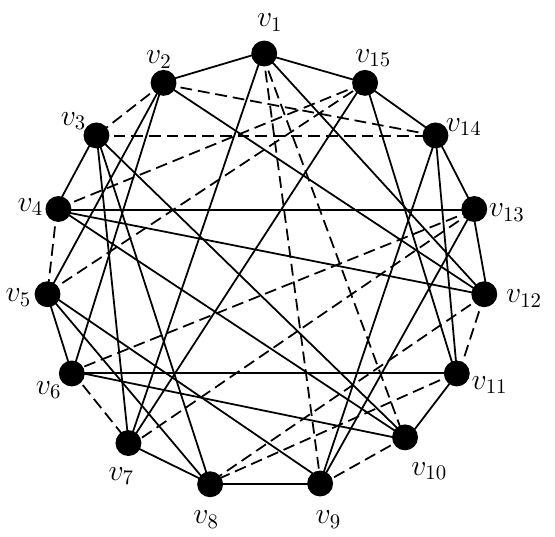}}
    \subfigure[$\dot{G}^5_{15}$ ]{
  \includegraphics[width=4.5cm]{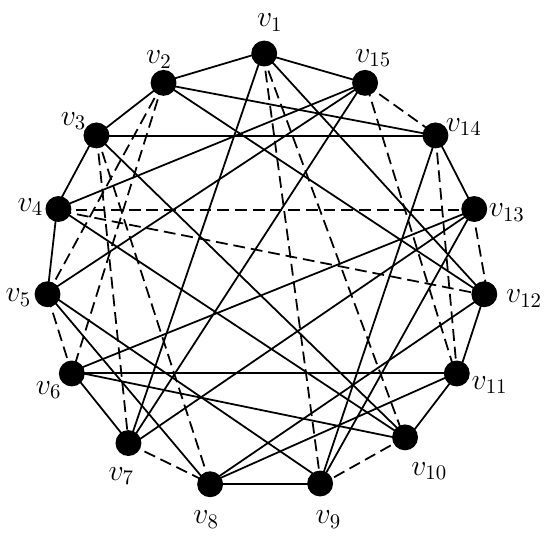}}
  \caption{The other signed graphs which satisfy parameters $(15,6,1,1,-1)$}\label{G15-12345}
\end{figure}

If we choose $v_1v_7v_{15}$ or $v_1v_9v_{10}$ to be the $-K_3$, we also get two signed graphs $\dot{G}^2_{15}$, $\dot{G}^3_{15}$ or $\dot{G} ^4_{15}$, $\dot{G}^5_{15}$, respectively. We show them in Figure \ref{G15-12345}. We also have $\dot{G} ^2_{15}$  is isomorphic to  $\dot{G}^3_{15}$ and $\dot{G} ^4_{15}$  is isomorphic to $\dot{G}^5_{15}$. The corresponding isomorphic mapping are $f_2:V(\dot{G}^2_{15})\rightarrow V(\dot{G}^3_{15})$ and $f_3:V(\dot{G}^4_{15})\rightarrow V(\dot{G}^5_{15})$, respectively. They satisfy that
\begin{equation*}
\begin{aligned}
 &f_2(v_{1}(\dot{G}^2_{15}))=v_{14}(\dot{G}^3_{15}), f_2(v_{2}(\dot{G}^2_{15}))=v_{2}(\dot{G}^3_{15}), f_2(v_{3}(\dot{G}^2_{15}))=v_{5}(\dot{G}^3_{15}),\\ &f_2(v_{4}(\dot{G}^2_{15}))=v_{4}(\dot{G}^3_{15}), f_2(v_{5}(\dot{G}^2_{15}))=v_{12}(\dot{G}^3_{15}), f_2(v_{6}(\dot{G}^2_{15}))=v_{1}(\dot{G}^3_{15}), \\ &f_2(v_{7}(\dot{G}^2_{15}))=v_{9}(\dot{G}^3_{15}), f_2(v_{8}(\dot{G}^2_{15}))=v_{8}(\dot{G}^3_{15}), f_2(v_{9}(\dot{G}^2_{15}))=v_{11}(\dot{G}^3_{15}), \\ &f_2(v_{10}(\dot{G}^2_{15}))=v_{15}(\dot{G}^3_{15}), f_2(v_{11}(\dot{G}^2_{15}))=v_{7}(\dot{G}^3_{15}), f_2(v_{12}(\dot{G}^2_{15}))=v_{3}(\dot{G}^3_{15}), \\ &f_2(v_{13}(\dot{G}^2_{15}))=v_{10}(\dot{G}^3_{15}), f_2(v_{14}(\dot{G}^2_{15}))=v_{6}(\dot{G}^3_{15}), f_2(v_{15}(\dot{G}^2_{15}))=v_{13}(\dot{G}^3_{15}).
 \end{aligned}
\end{equation*}
and
\begin{equation*}
\begin{aligned}
&f_3(v_{1}(\dot{G}^4_{15}))=v_{12}(\dot{G}^5_{15}), f_3(v_{2}(\dot{G}^4_{15}))=v_{2}(\dot{G}^5_{15}), f_3(v_{3}(\dot{G}^4_{15}))=v_{5}(\dot{G}^5_{15}), \\ &f_3(v_{4}(\dot{G}^4_{15}))=v_{15}(\dot{G}^5_{15}), f_3(v_{5}(\dot{G}^4_{15}))=v_{14}(\dot{G}^5_{15}), f_3(v_{6}(\dot{G}^4_{15}))=v_{3}(\dot{G}^5_{15}), \\ &f_3(v_{7}(\dot{G}^4_{15}))=v_{8}(\dot{G}^5_{15}), f_3(v_{8}(\dot{G}^4_{15}))=v_{9}(\dot{G}^5_{15}), f_3(v_{9}(\dot{G}^4_{15}))=v_{13}(\dot{G}^5_{15}), \\ &f_3(v_{10}(\dot{G}^4_{15}))=v_{4}(\dot{G}^5_{15}), f_3(v_{11}(\dot{G}^4_{15}))=v_{10}(\dot{G}^5_{15}), f_3(v_{12}(\dot{G}^4_{15}))=v_{1}(\dot{G}^5_{15}), \\ &f_3(v_{13}(\dot{G}^4_{15}))=v_{7}(\dot{G}^5_{15}), f_3(v_{14}(\dot{G}^4_{15}))=v_{6}(\dot{G}^5_{15}), f_3(v_{15}(\dot{G}^4_{15}))=v_{11}(\dot{G}^5_{15}).
 \end{aligned}
\end{equation*}
Moreover, we have $\dot{S} _{15}$ is isomorphic to  $\dot{G} ^2_{15}$ and $\dot{G} ^4_{15}$, respectively. The corresponding isomorphic mapping are $f_4:V(\dot{S} _{15})\rightarrow V(\dot{G}^2_{15})$ and $f_5:V(\dot{S} _{15})\rightarrow V(\dot{G}^4_{15})$, respectively. They satisfy that
\begin{equation*}
\begin{aligned}
 &f_4(v_{1}(\dot{S} _{15}))=v_{1}(\dot{G}^2_{15}), f_4(v_{2}(\dot{S} _{15}))=v_{7}(\dot{G}^2_{15}), f_4(v_{3}(\dot{S} _{15}))=v_{8}(\dot{G}^2_{15}),\\ &f_4(v_{4}(\dot{S} _{15}))=v_{5}(\dot{G}^2_{15}), f_4(v_{5}(\dot{S} _{15}))=v_{6}(\dot{G}^2_{15}), f_4(v_{6}(\dot{S} _{15}))=v_{13}(\dot{G}^2_{15}), \\ &f_4(v_{7}(\dot{S} _{15}))=v_{12}(\dot{G}^2_{15}), f_4(v_{8}(\dot{S} _{15}))=v_{11}(\dot{G}^2_{15}), f_4(v_{9}(\dot{S} _{15}))=v_{10}(\dot{G}^2_{15}), \\ &f_4(v_{10}(\dot{S} _{15}))=v_{9}(\dot{G}^2_{15}), f_4(v_{11}(\dot{S} _{15}))=v_{14}(\dot{G}^2_{15}), f_4(v_{12}(\dot{S} _{15}))=v_{15}(\dot{G}^2_{15}), \\ &f_2(v_{13}(\dot{S} _{15}))=v_{4}(\dot{G}^2_{15}), f_4(v_{14}(\dot{S} _{15}))=v_{3}(\dot{G}^2_{15}), f_4(v_{15}(\dot{S} _{15}))=v_{2}(\dot{G}^2_{15}).
 \end{aligned}
\end{equation*}
and
\begin{align*}
&f_5(v_{1}(\dot{S} _{15}))=v_{1}(\dot{G}^4_{15}), f_5(v_{2}(\dot{S} _{15}))=v_{9}(\dot{G}^4_{15}), f_5(v_{3}(\dot{S} _{15}))=v_{14}(\dot{G}^4_{15}), \\ &f_5(v_{4}(\dot{S} _{15}))=v_{3}(\dot{G}^4_{15}), f_5(v_{5}(\dot{S} _{15}))=v_{8}(\dot{G}^4_{15}), f_5(v_{6}(\dot{S} _{15}))=v_{5}(\dot{G}^4_{15}), \\ &f_5(v_{7}(\dot{S} _{15}))=v_{15}(\dot{G}^4_{15}), f_5(v_{8}(\dot{S} _{15}))=v_{11}(\dot{G}^4_{15}), f_5(v_{9}(\dot{S} _{15}))=v_{12}(\dot{G}^4_{15}), \\ &f_5(v_{10}(\dot{S} _{15}))=v_{2}(\dot{G}^4_{15}), f_5(v_{11}(\dot{S} _{15}))=v_{6}(\dot{G}^4_{15}), f_5(v_{12}(\dot{S} _{15}))=v_{10}(\dot{G}^4_{15}), \\ &f_5(v_{13}(\dot{S} _{15}))=v_{4}(\dot{G}^4_{15}), f_5(v_{14}(\dot{S} _{15}))=v_{13}(\dot{G}^4_{15}), f_5(v_{15}(\dot{S} _{15}))=v_{7}(\dot{G}^4_{15}).
 \end{align*}
Therefore, these six signed graphs are isomorphic to each other and we choose  $\dot{G}=\dot{S} _{15}$.
\end{proof}


\begin{lemma}
  Let $\dot{G}\in \mathcal{C}_1\bigcup\mathcal{C}_4\bigcup\mathcal{C}_5$ be a connected non-complete 6-regular and 2 net-regular SRSG, then $(a,b)\neq(2,1)$. And if $(a,b)=(3,1)$, then $\dot{G}=\dot{S}^1_{15}$ (Figure \ref{S15}) has parameters $(15,6,3,1,-2)$.
\end{lemma}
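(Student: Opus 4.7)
The plan is to enumerate admissible parameter quintuples first, then separate the construction from the exclusion. Combining equation \eqref{6-2-abcn} with $b=1$ and the lower bound $n\geq 15$ from Lemma \ref{6-2-b=1} gives $c(n-7)=-12$ with $n-7\geq 8$ in the $(a,b)=(2,1)$ case, forcing $(n,c)=(19,-1)$; and $c(n-7)=-16$ with $n-7\geq 8$ in the $(a,b)=(3,1)$ case, leaving $(n,c)\in\{(15,-2),(23,-1)\}$. Since $a\geq 2$ is incompatible with any unbalanced triangle of the first type (by the first-type classification lemma restricting $a$ to $\{-4,-2,-1,0\}$), every such $\dot G$ must contain an unbalanced triangle of the second type, say $v_iv_jv_k$, and I work with that triangle throughout.

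Next I extract a combinatorial skeleton common to both pairs $(a,b)$. Net-regularity saturates $N^-(v_k)=\{v_i,v_j\}$, so no vertex in $N^+(v_i)$ is negatively adjacent to $v_k$; on the negative edge $v_iv_k$ the vertex $v_j$ already furnishes one positive walk of length $2$, and $b=1$ then rules out every further common neighbour of $v_i,v_k$. Hence $v_l\not\sim v_k$ for all $v_l\in N^+(v_i)$, and symmetrically $v_l\not\sim v_j$. A parallel analysis on the negative edge $v_iv_j$ shows that any $v_l\in N^+(v_i)\cap N^+(v_j)$ would force $p-q\geq 2$, so the three positive neighbourhoods are pairwise disjoint. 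This gives $n\geq 3+12=15$ and confines the common neighbours of $v_i$ with any $v_l\in N^+(v_i)$ to the three-element set $N^+(v_i)\setminus\{v_l\}$.

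For the existence half, $(a,b)=(3,1)$ with $n=15$: the three-candidate restriction plus $a=3$ forces $v_l$ to be positively adjacent to every other vertex of $N^+(v_i)$, so $\{v_i\}\cup N^+(v_i)$ is a positive $K_5$, and the same holds at $v_j,v_k$. Net-regularity then leaves each $v_l$ with exactly two further edges, both negative, targeting $N^+(v_j)\cup N^+(v_k)$, and a double count produces exactly four negative edges between each pair of positive cliques. Applying $c=-2$ to the non-adjacent pair $(v_i,v_m)$ with $v_m\in N^+(v_j)$, whose length-two walks pass only through $v_j$ (one guaranteed negative walk) or through $N^+(v_i)$, fixes the number of negative edges from $v_m$ into $N^+(v_i)$ at exactly one; hence the negative edges between any two of the three cliques form a perfect matching. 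A further use of $c=-2$ on the pair $(a_i,b_j)$ with $i\neq j$ pins down the matching between $N^+(v_j)$ and $N^+(v_k)$, forcing the three matchings to assemble into four triangles $a_mb_mc_m$ once labels are aligned, and the resulting signed graph is isomorphic to $\dot S^1_{15}$.

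Finally, both $(2,1),n=19$ and $(3,1),n=23$ are eliminated by a single outer-vertex counting argument. Any vertex $u$ outside the $15$ inner vertices satisfies $u\not\sim v_i,v_j,v_k$, since those three already have their full neighbourhoods accounted for. Applying $c=-1$ to the non-adjacent pair $(v_i,u)$, whose only length-two walks go through $N^+(v_i)$, yields $a_i-b_i=-1$, where $a_i,b_i$ count positive and negative edges from $u$ into $N^+(v_i)$; symmetric relations hold for $v_j,v_k$. Summing gives $b_i+b_j+b_k=a_i+a_j+a_k+3$, so the negative degree of $u$ is at least $a_i+a_j+a_k+3$. But $u$ has degree $6$ and net-degree $2$, so $d^-(u)=2$, forcing $a_i+a_j+a_k\leq -1$, which is impossible. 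I expect the main obstacle to be the rigorous verification of the skeleton lemma — disjointness of the three $N^+$'s together with their non-adjacency to the opposite triangle vertex — since everything else reduces to walk counting once this structure is in hand.
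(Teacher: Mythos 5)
Your argument is essentially correct, but the mechanism you use to eliminate the surviving parameter sets is genuinely different from the paper's. The paper first shows (for $a\in\{2,3\}$ and $b=1$) that every negative edge lies in a $-K_3$, so $G^-$ is a disjoint union of triangles and $3\mid n$; for $a=3$ it additionally shows $G^+$ is a union of $K_5$'s, so $15\mid n$. Combined with $c(n-7)=-12$ (resp.\ $-16$) and the bound $n\geq 15$, divisibility alone kills $(2,1)$ entirely and leaves only $n=15$ for $(3,1)$. You instead push everything through the structural skeleton of a second-type unbalanced triangle (three pairwise disjoint positive neighbourhoods, none adjacent to the opposite triangle vertex, giving $n\geq15$), and then dispose of $(19,-1)$ and $(23,-1)$ by a clean counting argument at an exterior vertex $u$: the relations $a_i-b_i=c=-1$ for $i$ ranging over the three triangle vertices sum to $b_i+b_j+b_k=a_i+a_j+a_k+3>2=d^-(u)$, a contradiction. (In fact Lemma \ref{le2} already forces each $b_i\geq2$, so the contradiction is immediate.) Your route buys a uniform treatment of both leftover cases and avoids the $G^+$-clique divisibility argument; the paper's route is shorter for $(3,1)$ since $15\nmid 23$ settles it in one line. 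Your reconstruction of $\dot{S}^1_{15}$ via the three perfect matchings between the $K_5$'s, pinned down by $c=-2$ applied to cross-clique non-adjacent pairs, is also more explicit than the paper's ``it is easy to see,'' though the final step (the three matchings compose into four $-K_3$'s) is only sketched and deserves the one extra line showing that non-adjacent vertices in different cliques must have distinct partners in the third clique.

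One inference needs repair: from ``$a\geq2$ excludes unbalanced triangles of the first type'' you conclude that an unbalanced triangle of the second type \emph{must exist}, but the absence of one type does not by itself produce the other ($\dot{G}$ could a priori have only balanced triangles or none). The missing step is that $b=1$ together with Lemma \ref{le2} forces at least one positive walk of length $2$ across every negative edge, and the resulting triangle has one or three negative edges, hence is unbalanced; since negative edges exist ($d^-=2$) and the first type is excluded, a second-type triangle exists. This is exactly the observation the paper makes when it proves that the unique positive walk over a negative edge is a $-P_3$, so the gap is a one-line fill rather than a flaw in the strategy.
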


\begin{proof}
  If $(a,b)=(2,1)$, there are only two positive walks of length 2 between any two positively adjacent vertices in $\dot{G}$ since $r=6$. Suppose $v_iv_j$ is a negative edge. If there are three positive and two negative walks of length 2 between $v_i$ and $v_j$, it must exist $v_k\in N(v_i)\bigcap N(v_j)$  such that $v_k\overset{+} {\sim}v_i,v_j$. Then there will exist two negative walks length 2 between $v_i$ and $v_k$. This contradicts $a=2$. Therefore, there is only one positive walk of length 2 between any two negatively adjacent vertices. And the corresponding triangle is $-K_3$. That is to say that $G^-$ is the union of $K_3$. Then $3|n$.  we get the possible parameters sets are $(9,6,2,1,-6)$, $(10,6,2,1,-4)$, $(11,6,2,1,-3)$, $(13,6,2,1,-2)$ and $(19,6,2,1,-1)$ by equation \eqref{6-2-abcn}. Since $3|n$ and  $n\geq 15$ by $b=1$, there is no SRSG with $(a,b)=(2,1)$.

  If $(a,b)=(3,1)$, then there are only three positive walks of length 2 between any two positively adjacent vertices in $\dot{G}$. Then  $G^+$ is the union of $K_5$ by $a=3$ and
   we also have $G^-$ is the union of $K_3$ by the same reason as $(a,b)=(2,1)$. Therefore, we have $15|n$.  And by equation \eqref{6-2-abcn}, we get the possible parameters sets are $(23,6,3,1,-1)$, $(15,6,3,1,-2)$ and $(11,6,3,1,-4)$. So we only need to consider $(15,6,3,1,-2)$. It is easy to see that the corresponding SRSG is the signed graph $\dot{S}^1_{15}$ shown in Figure \ref{S15}.
\end{proof}

\begin{figure}
  \centering
  \includegraphics[width=9cm]{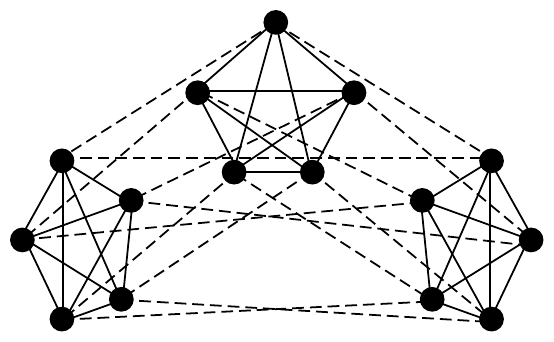}
  \caption{$\dot{S}^1_{15}$}\label{S15}
\end{figure}

\textbf{Secondly, we consider that $\dot{G}$ only has balanced triangles. }

\begin{lemma}
  Let $\dot{G}\in \mathcal{C}_1\bigcup\mathcal{C}_4\bigcup\mathcal{C}_5$ be a connected non-complete 6-regular and 2 net-regular SRSG. If $\dot{G}$ only has balanced triangles, then

  (1) $a=0,1,2,3$ and $b=-2,0$;

  (2)  $a=0$ ($b=0$, respectively) implies that two positively (negatively, respectively) adjacent vertices don't have  common neighbours.
\end{lemma}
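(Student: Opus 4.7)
The plan is to exploit the structural constraint imposed by the absence of unbalanced triangles together with Lemma~\ref{le2}. The key observation is that if every triangle is balanced, then for any edge $v_iv_j$ and any common neighbour $v_k$, the signs of $v_iv_k$ and $v_jv_k$ are essentially forced by the sign of $v_iv_j$: for a positive edge every common neighbour is of type $++$ or $--$, and for a negative edge every common neighbour is of mixed type $+-$ or $-+$. Consequently every length-$2$ walk between positively adjacent vertices is positive and every length-$2$ walk between negatively adjacent vertices is negative, which gives $a\ge 0$ and $b\le 0$, and moreover $a$ (resp.\ $-b$) equals the number of common neighbours of a positive (resp.\ negative) edge. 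From this last identity, part (2) of the lemma is immediate.

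For the upper bound on $|b|$, I would invoke Lemma~\ref{le2} to conclude that the number of negative length-$2$ walks between any two vertices is even, so $b$ is even. Since $|b|\le 5$, this leaves $b\in\{0,-2,-4\}$. To rule out $b=-4$: a negative edge $v_iv_j$ with four mixed-type common neighbours would, by Lemma~\ref{le2}, have exactly two of type $-+$ (negative to $v_i$, positive to $v_j$), contradicting $d^-(v_i)=2$ since $v_j$ is already a negative neighbour of $v_i$. Hence $b\in\{0,-2\}$.

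For the bound on $a$, fix a positive edge $v_iv_j$ and set $P=N^+(v_i)$, $N=N^-(v_i)$ with $|P|=4$, $|N|=2$. Every common neighbour of $v_iv_j$ lies in $(P\setminus\{v_j\})\cup N$; write $a=x+y$, where $x\le 3$ counts positive neighbours of $v_j$ in $P\setminus\{v_j\}$ and $y\le 2$ counts negative neighbours of $v_j$ in $N$. If $a=5$, then $(x,y)=(3,2)$, forcing $N^-(v_j)=N$; applying the same argument to each positive edge $v_iv_k$ with $v_k\in P\setminus\{v_j\}$ yields $N^-(v_k)=N$ as well, so each $w\in N$ would have at least five negative neighbours, contradicting $d^-(w)=2$. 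For $a=4$, the cases $(x,y)=(3,1)$ and $(x,y)=(2,2)$ are handled in the same spirit: in each case one first shows that $\{v_i,v_j\}\cup(P\setminus\{v_j\})$ induces a positive $K_5$, and then propagates the constraint that its vertices share negative neighbours to produce a vertex of $N$ with three or more negative neighbours.

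I expect the main obstacle to be the case analysis for $a=4$: the conclusion that the five vertices form a positive $K_5$ is not immediate from $(x,y)=(3,1)$ or $(2,2)$ alone, and requires re-applying the count $a=4$ to several positive edges incident to $v_i$ and tracking which vertices of $N$ are hit by negative edges. Once this propagation is set up correctly, the degree contradiction on $N$ closes the case; combining everything yields $a\in\{0,1,2,3\}$ and $b\in\{-2,0\}$.
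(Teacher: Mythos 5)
Your proposal is correct and follows essentially the same route as the paper: the balance of all triangles forces the sign pattern of every common neighbour (giving $a\ge 0$, $b\le 0$, the identification of $a$ and $-b$ with numbers of common neighbours, and hence part (2) at once), Lemma~\ref{le2} together with $d^-(v)=2$ gives $b\ge -2$, and re-applying the count $a$ to the positive edges $v_iv_k$ for $v_k\in N^+(v_i)$ forces negative edges into $N^-(v_i)$ until some vertex there exceeds negative degree $2$, ruling out $a=4,5$. The only imprecision is the claim that both subcases of $a=4$ first produce a positive $K_5$: in the subcase $(x,y)=(2,2)$ both vertices of $N^-(v_i)$ already carry two negative edges (to $v_i$ and $v_j$) and so cannot be adjacent to $v_k$ at all, leaving $v_iv_k$ with at most three common neighbours and an immediate contradiction with no $K_5$ appearing --- which only makes that subcase easier than your sketch suggests.
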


\begin{proof}
Since $\dot{G}$ only has balanced triangles, there is no negative walk of length 2 between any two positively adjacent vertices and no positive walk of length 2 between any two negatively adjacent vertices. So we have $a\geq 0$ and $b\leq 0$ . Suppose $v_iv_j$ is a positive edge of $\dot{G}$.

If $a=5$, then there are five positive walks of length 2 between any two  positively adjacent vertices. Suppose that $N(v_i)\bigcap N(v_j)=\{v_k,v_l,v_m,v_s,v_t\}$ and $v_k,v_l,v_m\overset{+} {\sim}v_i,v_j$ and $v_s,v_t\overset{-} {\sim}v_i,v_j$. For the  positive edge $v_iv_k$, it must have  $v_k\overset{-} {\sim}v_s,v_t$. But this leads to $d^-(v_s)=d^-(v_t)=3$. A contradiction.

If $a=4$, then there are four positive walks of length 2 between any two  positively adjacent vertices.  Suppose that $N(v_i)\bigcap N(v_j)=\{v_k,v_l,v_m,v_s\}$ and $v_t$ is the remaining neighbour of $v_i$. It must have one or two vertices of $\{v_k,v_l,v_m,v_s\}$ are negatively adjacent to $v_i,v_j$. If  two vertices are negatively adjacent to both $v_i$ and $v_j$, we suppose  $v_m,v_s\overset{-} {\sim}v_i,v_j$ without loss of generality. So we have $v_k, v_l$ are negatively adjacent to one of $v_m,v_s$ by $a=4$, then it is contradictory to the vertex net-degree. If  only one vertex is negatively adjacent to both $v_i$ and $v_j$, we suppose $v_s\overset{-} {\sim}v_i,v_j$. Then we have $v_t\overset{-} {\sim}v_i$. Since  $d^-(v_s)=2$, we have $v_k, v_l\overset{-} {\nsim} v_s$. So it must have $v_k, v_l\overset{-}{\sim} v_t$ to satisfy $a=4$ for positive edges  $v_iv_k$ and $v_iv_l$. But it leads to
$d^-(v_t)>2$. A contradiction.

By Lemma \ref{le2} and vertex negative degree,  there are at most two negative walks of length 2 between two negatively adjacent vertices. Then we have $b\in \{0,-1,-2\}$. But there will be one unbalanced triangle if  $b=-1$. Therefore, $b\in \{-2,0\}$.

Since two positively (negatively, respectively) adjacent vertices don't have negative (positive, respectively) walks of length 2, they don't have common neighbours if $a=0$ ($b=0$, respectively).

\end{proof}

If $\dot{G}$ has only balanced triangles, we have $a\in \{0,1,2,3\}$ and $b\in \{0,-2\}$ by the above Lemma. And $(a,b)\neq(0,0)$ if $\dot{G}$ has triangles. And it is obvious that $(a,b)\neq(0,-2)$ since there will be one positive walk of length 2 between two positively adjacent vertices.

If $(a,b)=(1,0)$, the possible parameters sets are $(13,6,1,0,-1)$, $(10,6,1,0,-2)$, $(9,6,1,0,-3)$ and $(8,6,1,0,-6)$ by equation \eqref{6-2-abcn}. Since $a=1$ and $\dot{G}$ contains only balanced triangles,  there is only one positive walk of length 2 between any two positively adjacent vertices and any two negatively adjacent vertices have no common neighbours. Let  $v_1v_2$ be a negative edge of $\dot{G}$, $N(v_1)=\{v_2, v_3, v_4, v_5, v_6, v_7\}$ and $N(v_2)=\{v_1,v_8, v_9, v_{10}, v_{11}, v_{12}\}$. Suppose $v_3\overset{-} {\sim}v_1$,\; $v_4, v_5, v_6, v_7\overset{+} {\sim}v_1$,\;  $v_8\overset{-} {\sim}v_2$ and $ v_9, v_{10}, v_{11}, v_{12}\overset{-} {\sim}v_2$. Obviously, we have $n\geq 12$. So we only need to consider parameters $(13,6,1,0,-1)$. Since vertices $v_1$ and $v_8$ are not adjacent,  $v_8$ has only three common neighbours with $v_1$ by $c=-1$. And  $v_8$ is not adjacent to $v_9, v_{10}, v_{11}, v_{12}$, then we have $n> 13$. A contradiction.
%

If $(a,b)=(2,0)$, then two positive walks of length 2 between any two positively adjacent vertices are $+P_3$ and any two negatively adjacent vertices have no common neighbours. Let $v_iv_j$ be a positive edge of $\dot{G}$, $N^+(v_i)=\{v_j, v_k, v_l, v_m\}$ and $N^+(v_j)=\{v_i, v_l, v_m, v_s\}$. Then we have $ v_k\overset{+} {\sim} v_l, v_m$ and  $ v_s\overset{+} {\sim} v_l, v_m$ by the positive edges $v_iv_k$ and $v_jv_s$. And by the  positive edge $v_lv_k$, we have $v_k\overset{+} {\sim}v_s$. Now every vertex has positive degree 4. So $G^+$ is the union of a 4-regular graph with order 6 as shown in Figure \ref{4re-order6}. Then $6|n$. We get that the possible parameters are $(17,6,2,0,-1)$, $(12,6,2,0,-2)$ and $(9,6,2,0,-5)$ by equation \eqref{6-2-abcn}. It is obvious that we only need to consider $(12,6,2,0,-2)$. Since there are four positive walks of length 2 between $v_j$ and $v_k$, they need six negative walks of length 2 by $c=-2$. But this is contradictory to $r=6$.

\begin{figure}
  \centering
  \includegraphics[width=4cm]{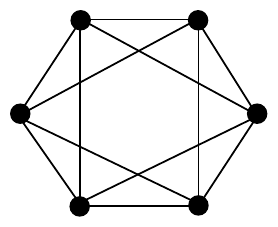}
  \caption{ The 4-regular graph with order 6}\label{4re-order6}
\end{figure}

If $(a,b)=(3,0)$, then there are three positive walks of length 2 between any two positively adjacent vertices and they are $+P_3$. And any two negatively adjacent vertices have no common neighbours. Let $v_iv_j$ be a positive edge of $\dot{G}$ and $N(v_i)\bigcap N(v_j)=\{v_k, v_l, v_m\}$.  Then we have $ v_k\overset{+} {\sim} v_l, v_m$ and  $ v_l\overset{+} {\sim} v_m$ by the positive edges $v_iv_k$ and $v_iv_l$. So $G^+$ is the union of $K_5$. Therefore, we have $5|n$. By equation \eqref{6-2-abcn}, the possible parameters sets are: $(21,6,3,0,-1)$, $(14,6,3,0,-2)$, $(9,6,3,0,-7)$. It is obvious that they are not true.

If $(a,b)=(1,-2)$, the possible parameters sets are  $(9,6,1,-2,-1)$ and $(8,6,1,-2,-2)$ by equation \eqref{6-2-abcn}. Since $(a,b)=(1,-2)$, two adjacent vertices in underlying graph $G$ have one or two  common neighbours. There are four 6-regular graphs with order 9 (Figure \ref{9-6}) and one  6-regular graph with order 8 ($G_8$ of Figure \ref{8-6.864-4-6}). We can get that none of them satisfies this condition by some computations.

If $(a,b)=(2,-2)$, the possible parameters sets are $(13,6,2,-2,-1)$, $(10,6,2,-2,-2)$, $(9,6,2,-2,-3)$ and $(8,6,2,-2,-6)$ by equation \eqref{6-2-abcn}. For the parameters  $(10,6,2,-2,-2)$, any two adjacent vertices must have two common neighbours in the underlying graph. There are 21 6-regular graphs with order 10 (Figure \ref{10-6}). We can get that none of them satisfies this condition by some simple computations.
 The corresponding underlying graphs of the first and last two sets  are strongly regular graphs with $(13,6,2,3)$, $(9,6,2,5)$ and $(8,6,2,6)$. There is no  strongly regular graph with parameters $(9,6,2,5)$ or $(8,6,2,6)$ by equation \eqref{srg}. The first strongly regular graph is the Paley Graph $P_{13}$ with order 13 which shown  in Figure \ref{SP13}. Let $\dot{G}$ be a 6-regular and 2 net-regular SRSG with sign function $\sigma$ and parameters $(13,6,2,-2,-1)$, then its underlying graph is $P_{13}$. Since $\dot{G}$ is inhomogeneous, we suppose $\sigma(v_7v_8)=-1$ without loss of generality. If $\sigma(v_8v_{11})=\sigma(v_7v_4)=-1$ and $\sigma(v_8v_4)=\sigma(v_7v_{11})=1$, we can get the subgraph of  $\dot{G}$ as show in Figure \ref{SP13} by considering the  negative edge $v_8v_{11}$ and every emerging negative edge in turn. Now, we have $d^-(v_{7})=3$ which is contradictory to the vertex negative degree. If $\sigma(v_8v_{11})=\sigma(v_7v_4)=1$ and $\sigma(v_8v_4)=\sigma(v_7v_{11})=-1$, we have $d^-(v_{4})=3$ by the same way. Also a contradiction.

\begin{figure}
  \centering
  \subfigure [$P_{13}$]{
    \includegraphics[width=7cm]{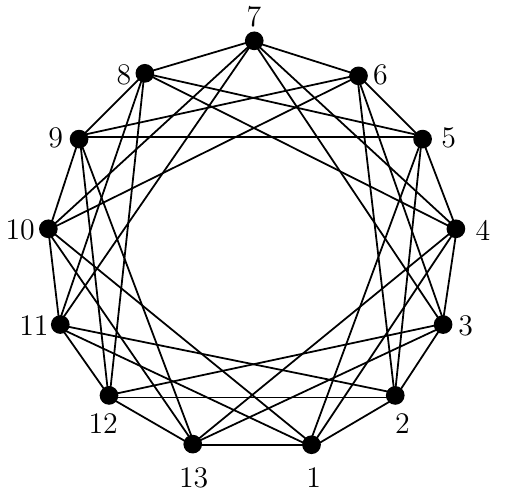}}
      \subfigure[$SP_{13}$]{
    \includegraphics[width=7cm]{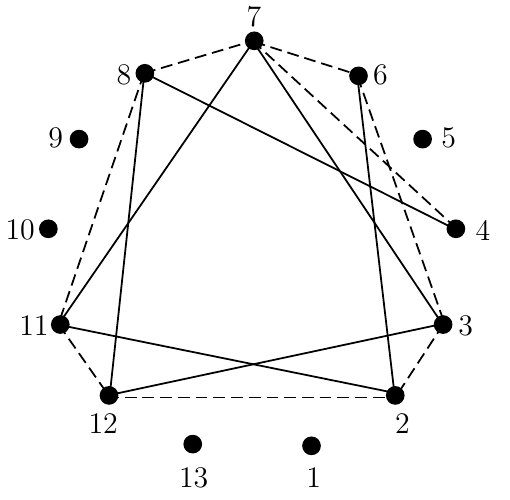}}
    \caption{Paley Graph with order 13 and the part of signed paley Graph with order 13 }\label{SP13}
\end{figure}

%

\begin{lemma}
  Let $\dot{G}\in \mathcal{C}_1\bigcup\mathcal{C}_4\bigcup\mathcal{C}_5$ be a connected non-complete 6-regular and 2 net-regular SRSG. If it contains only balanced triangles, then  $(a,b)\neq(3,-2)$.
\end{lemma}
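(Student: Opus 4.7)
The plan is first to pin down the admissible parameters. Applying equation~(\ref{6-2-abcn}) with $(a,b)=(3,-2)$ gives $c(n-7)=-10$, and the bounds $n\ge 8$, $|c|\le r=6$ leave only $(n,c)\in\{(9,-5),(12,-2),(17,-1)\}$. Since every triangle is balanced, each positive edge has $a=3$ common neighbours and each negative edge has $|b|=2$, so the total number of triangles is $\tfrac{1}{3}(3\cdot 2n+2\cdot n)=\tfrac{8n}{3}$; hence $3\mid n$, ruling out $n=17$. The case $n=9$ then fails because $c=-5$ requires $5$ common neighbours producing negative walks between any non-adjacent pair $v_i,v_j$, yet such neighbours lie in $(N^+(v_i)\cap N^-(v_j))\cup(N^-(v_i)\cap N^+(v_j))$, whose size is at most $|N^-(v_i)|+|N^-(v_j)|=4$, a contradiction.

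For $n=12$ I would derive a rigid local structure at every vertex $v$. From $b=-2$ and balanced triangles, each negative edge $vw$ has exactly two common neighbours, one in each of $N^+(v)\cap N^-(w)$ and $N^-(v)\cap N^+(w)$; applied to the two negative edges incident to $v$, this forces $N^-(v)=\{w_1,w_2\}$ with $w_1\overset{+}{\sim}w_2$ and places the ``other'' negative partners $x_1,x_2$ of $w_1,w_2$ in $N^+(v)$. Summing $|N(v)\cap N(u)|=a=3$ over the four positive edges $vu$, and splitting the $12$ contributions into same-sign ones (giving $2\,e^+_v$ where $e^+_v:=|E(G^+[N^+(v)])|$) and opposite-sign ones (giving $\sum_{p}|N^+(v)\cap N^-(w_p)|=2$) yields $e^+_v=5$. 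Hence $G^+[N^+(v)]\cong K_4-e$, and its unique missing pair must be a non-edge of $G$, since a negative edge there would close an unbalanced triangle with $v$.

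The graph $G^-$ is $2$-regular on $12$ vertices with girth at least $4$, so $G^-$ is one of $C_4+C_4+C_4$, $C_4+C_8$, $C_{12}$, $C_5+C_7$, $C_6+C_6$, and I would rule out each. A $C_5$-component forces the partners $x_1,x_2$ of a vertex $v$ on it to be neg-adjacent while both lying in $N^+(v)$, making $vx_1x_2$ unbalanced; this kills $C_5+C_7$. A $C_4$-component with $v=a_1$ and diagonal $a_3$: the positive edge $a_1a_3$ has $\beta=|N^-(a_1)\cap N^-(a_3)|=2$, hence $\alpha=1$, so $a_1,a_3$ share exactly one external positive neighbour; but the $K_4-e$ on $N^+(a_1)=\{a_3,y_1,y_2,y_3\}$ forces $a_3$ to be positively adjacent to at least two of the $y_i$, producing at least two common external positive neighbours, contradicting $\alpha=1$. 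This kills $C_4+C_4+C_4$ and $C_4+C_8$. For $G^-=C_{12}$, a triangle-balance argument forces $N^+(v_i)=\{v_{i\pm 2},v_{i\pm 4}\}$, and only $4$ of the $6$ pairs inside $N^+(v_i)$ are positive edges, so $e^+_{v_i}=4\ne 5$. For $G^-=C_6+C_6$ with components $A$ and $B$, each $b\in B$ has $N^+_A(b)$ equal to a distance-$2$ or distance-$3$ pair in $A$; the constraint $\alpha=2$ on distance-$2$ positive edges and $P=2$ on antipodal non-edges inside $A$ force exactly $1$ and $2$ such $b$'s per pair respectively, giving $6\cdot 1+3\cdot 2=12$ vertices in $B$, whereas $|B|=6$.

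The main obstacle is the $n=12$ step: deriving the rigid $K_4-e$ structure on $G^+[N^+(v)]$ and carrying out the case analysis on $G^-$, with the $C_4$-component and $C_6+C_6$ arguments being the most delicate and relying on careful accounting of cross-component positive edges combined with the local constraint. All five configurations being impossible, the conclusion $(a,b)\ne(3,-2)$ follows.
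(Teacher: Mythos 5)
Your route is genuinely different from the paper's. The paper never touches equation~(\ref{6-2-abcn}) or the value of $n$: it fixes a negative edge $v_iv_j$, uses Lemma~\ref{le2} to pin down the two negative walks between negatively adjacent vertices, propagates the constraints $a=3$ along a handful of positive edges ($v_iv_{t_1}$, $v_iv_{s_1}$, $v_jv_{t_2}$, $v_jv_{s_2}$), and reaches a contradiction at the last positive neighbour $v_h$ of $v_k$, which cannot have three positive walks of length $2$ to $v_k$. Your argument instead narrows the parameters to $n\in\{9,12\}$ (the counts $c(n-7)=-10$, $3\mid n$ from triangle counting, and the $n=9$ elimination via the parity of negative walks are all correct) and then attacks $n=12$ through the local structure and a case analysis on $G^-$.

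Two remarks on the $n=12$ part. First, your derivation of $e^+_v=5$, hence $G^+[N^+(v)]\cong K_4-e$ with the missing pair a non-edge of $G$, is correct --- but you stop one step short of noticing that this already finishes the whole lemma, for any $n$. Write $N^+(v)=\{y_1,y_2,y_3,y_4\}$ with $y_1y_2$ the missing pair. Then $N^+(y_3)=N^+(y_4)=\{v,y_1,y_2\}\cup\{y_4 \text{ or } y_3\}$ are completely determined, and $y_1$ has a fourth positive neighbour $z\notin\{v,y_2,y_3,y_4\}$; since $z\notin N^+(v)\cup N^+(y_3)\cup N^+(y_4)$, the vertex $z$ is isolated in $G^+[N^+(y_1)]$, so $e^+_{y_1}\le 3\neq 5$. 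This makes the entire case analysis on $G^-$ (and in fact the parameter computation) unnecessary. Second, as written the $C_{12}$ sub-case has a real gap: the assertion that ``a triangle-balance argument forces $N^+(v_i)=\{v_{i\pm2},v_{i\pm4}\}$'' is not justified. Balance only forces $v_{i\pm2}\in N^+(v_i)$ and excludes distance-$3$ pairs; ruling out distance-$5$ and distance-$6$ positive edges requires a further argument (e.g.\ via the $K_4-e$ condition), and without it the conclusion $e^+_{v_i}=4$ does not follow. The $C_4$-component, $C_5$-component and $C_6+C_6$ arguments, though very tersely stated, do check out. So the proposal is essentially sound and completable, but you should either close the $C_{12}$ gap or, better, replace the whole $G^-$ analysis by the two-line contradiction from the $K_4-e$ structure you already established.
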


\begin{proof}
   If $(a,b)=(3,-2)$, then there are three positive walks of length 2 between any two positively adjacent vertices and two negative walks of length 2 between any two negatively adjacent vertices. Let $v_iv_j$ be a negative edge of $\dot{G}$. Suppose that $N(v_i)=\{v_j,v_k,v_l,v_{m_1},v_{s_1},v_{t_1}\}$, $N(v_j)=\{v_i, v_k,v_l,v_{m_2},v_{s_2},v_{t_2}\}$ and  $v_k\overset{+} {\sim}v_i$,  $v_k\overset{-} {\sim}v_j$, $v_l\overset{-} {\sim}v_i$,  $v_l\overset{+} {\sim}v_j$. We consider the negative edge $v_iv_l$, then we have $v_k\overset{-} {\sim}v_l$ or $v_l$ is negatively adjacent to one of $v_{m_1},v_{s_1},v_{t_1}$.

   If $v_k\overset{-} {\sim}v_l$, we consider the positive edges $v_iv_{t_1}$ and $v_iv_{s_1}$. Then it must have $v_{t_1}\overset{+} {\sim}v_k, v_{m_1},v_{s_1}$ for $v_iv_{t_1}$. Now there are three positive walks of length 2 between $v_i$ and $v_k$, then $v_{s_1}\nsim v_k$. So there are at most two positive walks of length 2 between $v_i$ and $v_{s_1}$. A contradiction.

   For the second case, we suppose $v_l\overset{-} {\sim}v_{m_1}$ without loss of generality. It also must have $v_{t_1}\overset{+} {\sim}v_k, v_{m_1},v_{s_1}$ for the positive edge $v_iv_{t_1}$. And then we have $v_{s_1}\overset{+} {\sim}v_k, v_{m_1}$ for the positive edge $v_iv_{s_1}$. For the negative edge  $v_jv_k$, we can suppose $v_k\overset{-} {\sim}v_{m_2}$ without loss of generality. In a similar way, we have $v_{t_2}\overset{+} {\sim}v_l, v_{m_2},v_{s_2}$ and $v_{s_2}\overset{+} {\sim}v_l, v_{m_2}$  for the positive edges $v_jv_{t_2}$, $v_jv_{s_2}$.

   Now we consider the vertex $v_k$, it remains a positive neighbour, denoted by $v_h$. Then there must be three positive walks of length 2 between $v_k$ and $v_h$. Since $v_k\nsim v_{m_1}$ and $d^+(v_i)=d^+(v_{s_1})=d^+(v_{t_1})=4$ and $d(v_j)=6$, we have $v_h\nsim v_i,v_j, v_{s_1},v_{t_1}$. So there can't be three positive walks of length 2 between $v_k$ and $v_h$. A contradiction.
\end{proof}

\textbf{Finally, we consider that $\dot{G}$ does not contains triangles.}

If there is no triangle in $\dot{G}$, then $(a,b)=(0,0)$. So the possible parameters sets are $(9,6,0,0,-1)$ and $(8,6,0,0,-2)$. In this case, any two adjacent vertices in $G$  have no common neighbours. None 6-regular graphs with order 8 ($G_8$ of Figure \ref{8-6.864-4-6}) and 9 (Figure \ref{9-6}) satisfy this condition.

To sum up the above contents, we have the following Theorem.

\begin{theorem}
  There are six connected 6-regular and 2 net-regular SRSGs:  $\dot{S}^2_8$, $\dot{S}^3_8$, $\dot{S}_9$, $\dot{S}^1_9$, $\dot{S}_{15}$ and $\dot{S}^1_{15}$.
\end{theorem}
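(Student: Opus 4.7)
The plan is to assemble all preceding lemmas of Section 3.2 into a single classification. I would first note that a 6-regular 2 net-regular SRSG has $G^+$ 4-regular and $G^-$ 2-regular, so $G^-$ is a disjoint union of cycles. The complete and $\mathcal{C}_2$ cases are disposed of by the spectral equation $A^2_{\dot G}+bA_{\dot G}-6I=0$, which forces $(a,b)=(-1,1)$; this is excluded by Lemma \ref{6-2-a-1b1}. The class $\mathcal{C}_3$ forces $G\cong K_7$, and a direct check that $G^-\in\{C_7,C_3\cup C_4\}$ yields no SRSG. Hence I may assume $\dot{G}$ is non-complete in $\mathcal{C}_1\cup\mathcal{C}_4\cup\mathcal{C}_5$, so equation \eqref{6-2-abcn} applies.

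I would then split on triangle structure. For $\dot G$ containing an unbalanced triangle of the first type, the cited lemmas restrict $a\in\{-4,-2,-1,0\}$ and $-1\leq b\leq 4$; individual elimination of the remaining $(a,b)$ pairs leaves only the parameter sets $(8,6,-4,4,6)$, $(9,6,-1,3,-2)$, $(9,6,-1,0,1)$, and $(8,6,0,0,-2)$, realized uniquely up to isomorphism by $\dot S^2_8$, $\dot S_9$, $\dot S^1_9$, $\dot S^3_8$. For $\dot G$ with an unbalanced triangle only of the second type, the constraints reduce to $b=1$ with $0\le a\le 3$ (the $b=3$ sub-cases collapse back to the first-type case by Lemma \ref{6-b=3}), and further lemmas pin down exactly $(15,6,1,1,-1)$ and $(15,6,3,1,-2)$, yielding $\dot S_{15}$ and $\dot S^1_{15}$. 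If all triangles of $\dot G$ are balanced, then $a\in\{0,1,2,3\}$ and $b\in\{-2,0\}$, and equation \eqref{6-2-abcn} combined with the catalogues of small 6-regular graphs eliminates every pair. The triangle-free case $(a,b)=(0,0)$ gives $n\in\{8,9\}$, but no 6-regular graph of that order has the required no-common-neighbour property between adjacent vertices.

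To conclude, I would verify that the six surviving candidates are pairwise non-isomorphic — they already split by order or by parameter tuple — and that each is genuinely an SRSG with the claimed parameters, which is a direct inspection of the explicit signings produced in the preceding lemmas. The theorem then follows by union over the four cases.

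The main obstacle is not any single computation but the bookkeeping: in the first-type case alone there are about fifteen $(a,b)$ pairs to rule out, each demanding a local argument built on Lemma \ref{le2}, and the sub-case analysis at $(a,b)=(0,\cdot)$ with three geometric subcases is the most delicate step. A secondary difficulty is the uniqueness proofs for the realized SRSGs — most conspicuously for $\dot S_{15}$, where several candidate signings of $GQ(2,2)$ must be shown to coincide up to isomorphism via explicit bijections.
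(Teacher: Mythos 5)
Your proposal follows essentially the same route as the paper: dispose of the complete/$\mathcal{C}_2$/$\mathcal{C}_3$ cases spectrally, then for non-complete $\dot{G}\in\mathcal{C}_1\cup\mathcal{C}_4\cup\mathcal{C}_5$ split on triangle type (first-type unbalanced, second-type unbalanced, only balanced, triangle-free) and eliminate $(a,b)$ pairs via equation \eqref{6-2-abcn}, Lemma \ref{le2}, and the catalogues of small 6-regular graphs, leaving exactly the six listed signed graphs. This is the paper's own assembly of Section 3.2, and your account of which lemmas carry each case is accurate.
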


\subsection{The 6-regular SRSGs with net-degree 0}

 Let $\dot{G}$ be a connected 6-regular and 0 net-regular SRSG, then both $G^+$ and $G^-$ are 3-regular. And we have

\begin{equation}\label{6-0-abcn}
3(a+b)+c(n-7)+6=0
\end{equation}
 by equation \eqref{def}. Moreover, we only need to consider  $a\geq b$ by Lemma \ref{le1} since both $\dot{G}$ and $-\dot{G}$ are 0 net-regular.

 If $\dot{G}\in \mathcal{C}_2$, we have $A^2_{\dot{G}}+bA_{\dot{G}}-6I=0$ by equation \eqref{def}. But this is impossible in this case since $\dot{G}$ has the net-degree 0 as an eigenvalue.

If $\dot{G}\in \mathcal{C}_3$, then $\dot{G}$ is complete \cite{KS}. But there is no 3-regular graph with order 7. Then $G^+$ and $G^-$ don't exist. By the same way, there is no complete SRSG in $\mathcal{C}_1$.

Now we consider the connected non-complete SRSG $\dot{G}\in \mathcal{C}_1\bigcup\mathcal{C}_4\bigcup\mathcal{C}_5$.

\begin{lemma}\label{6-0}
  If $\dot{G}\in \mathcal{C}_1\bigcup\mathcal{C}_4\bigcup\mathcal{C}_5$ is a connected non-complete 6-regular and 0 net-regular SRSG, then

  (1) $a,b<5$;

  (2) If $a=4$ ($b=4$, respectively), then $b\leq-3$ ($a\leq-3$, respectively);

  (3) $a,b\neq-3$;

  (4) $a,b\geq-4$. And if $b=-4$ ($a=-4$, respectively), then $a\geq0$ ($b\geq0$, respectively);

  (5) If $b=3$ ($a=3$, respectively), then $a\leq1$ ($b\leq1$, respectively);

  (6) $a,b\neq-2$.

  (7) If $b=-1$ ($a=-1$, respectively), then $a\leq1$ ($b\leq1$, respectively).

\end{lemma}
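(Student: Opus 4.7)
The plan is to prove each of the seven parts by local analysis along a carefully chosen edge, using three tools consistently: Lemma \ref{le2}, which forces the number of negative walks of length $2$ between any two vertices to be even and pairs up the underlying common neighbors by sign pattern; the degree constraint $d^+(v)=d^-(v)=3$ for every $v$ (from $6$-regularity combined with net-degree $0$); and the bound $|N(v_i)\cap N(v_j)|\leq 5$ along any edge. Along a positive edge $v_iv_j$ with $m=p+q$ common neighbors contributing $p$ positive walks and $q$ negative walks of length $2$, one has $p-q=a$ with $q$ a non-negative even integer, so each target value of $a$ reduces to a very short list of triples $(m,p,q)$; analogous statements hold for $b$ on negative edges. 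By Lemma \ref{le1} it suffices to prove each statement for $a$, the twin for $b$ following from negating $\dot{G}$.

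For the ``small'' parts $(1)$, $(3)$, $(6)$, the admissible triples are quite restricted. In $(1)$, $a=5$ forces $(m,p,q)=(5,5,0)$, pinning down the local structure at a positive edge so that $v_i$ and $v_j$ share all five remaining neighbors with matching signs; together with $d^\pm=3$ this determines the shape up to relabeling, and a brief degree count at a common negative neighbor makes it accumulate four negative incidences, a contradiction. In $(3)$, $a=-3$ requires $q$ even with $p-q=-3$ and $p+q\leq 5$, leaving only $(5,1,4)$; the structural half of Lemma \ref{le2} fixes the sign pattern on the four negative-walk commons, and chasing this rigidity along a positive edge from $v_i$ to a common neighbor induces a degree overflow. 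Part $(6)$ is similar: $a=-2$ permits only $(2,0,2)$, so every positive edge at $v_i$ carries exactly two commons paired by sign; propagating the forced shape through several positive edges again produces a degree overflow.

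For the ``large'' parts $(2)$, $(4)$, $(5)$, $(7)$, I would pick a positive or negative edge according to which of $a$ or $b$ is being constrained, identify the forced local configuration (one or two cases at most), and then examine an adjacent edge of the opposite sign. For $(2)$, $a=4$ gives the rigid triple $(4,4,0)$ on positive edges; the resulting subconfiguration on the common neighbors forces many paired negative walks along the incident negative edges, yielding $b\leq -3$. For $(4)$, $a=-5$ is ruled out by parity ($q=5$ is odd), and $a=-4$ leaves only $(4,0,4)$, whose forced sign pattern on the four commons makes every negative edge incident to $v_i$ admit at least one positive-walk common neighbor, forcing $b\geq 0$. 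For $(5)$ and $(7)$, the assumption $b=3$ or $b=-1$ forces the presence of a common neighbor $v_k$ of the negatively adjacent pair $v_i,v_j$ that is positive to both; examining the positive edge $v_iv_k$ while accounting for the contribution of $v_j$ and the other commons forces many negative walks between $v_i$ and $v_k$, pushing $a\leq 1$.

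The main obstacle will be the case analyses in parts $(2)$ and $(4)$, where the sign patterns on four common neighbors have to be tracked through several derived edges rather than closed off by a one-step parity contradiction, and where the interplay between the $d^+=3$ and $d^-=3$ constraints must be balanced simultaneously. The remaining parts should each reduce to a single forced configuration plus a local count, following the template set in the earlier lemmas of Section $3.2$.
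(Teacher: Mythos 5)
Your overall framework is the same as the paper's: classify the admissible triples $(m,p,q)$ of common neighbours along a suitably signed edge using the parity constraint of Lemma \ref{le2} together with $d^+(v)=d^-(v)=3$ and $m\leq 5$, chase the forced sign pattern onto an adjacent edge, and dispose of the mirror statements via Lemma \ref{le1}. However, the proposal is only a plan, and in the three places where you commit to a concrete closing mechanism, the mechanism is not the one that actually materializes. First, in part (4) the inference ``every negative edge incident to $v_i$ admits at least one positive-walk common neighbour, forcing $b\geq 0$'' is a non sequitur: $p\geq 1$ with $q$ even and $p+q\leq 5$ only gives $b=p-q\geq -3$, and even after excluding $b=-3$ and $b=-2$ by parts (3) and (6) you are left with $b\geq -1$, not $b\geq 0$. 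What actually closes this case (as in the paper's treatment of the equivalent $b=-4$ configuration) is that the forced pattern produces \emph{two} same-signed walks of length $2$ through the derived edge — e.g.\ both $v_j$--$v_i$--$v_k$ and $v_j$--$v_s$--$v_k$ are positive — so $p\geq 2$, which forces $q\leq 2$ and hence the parameter $\geq 0$. You need that second walk explicitly.

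Second, in part (3) the rigid triple $(5,1,4)$ does not produce a degree overflow: chasing the forced adjacencies gives every one of the seven vertices degree exactly $6$ inside the set, i.e.\ $G\cong K_7$, and the contradiction is with the hypothesis that $\dot{G}$ is non-complete, not with the degree bound. If you go in looking for a vertex of degree $>6$ you will not find one. Third, in part (6) the triple $(2,0,2)$ does not lead to a degree overflow either; the paper's contradiction is a violation of the pairing in Lemma \ref{le2} (two negative walks of the same type between $v_i$ and $v_l$, forcing $q\geq 4$ and hence the parameter $\leq -3$), and the cleanest alternative in your $a=-2$ formulation is a counting obstruction: each of the three positive neighbours of $v_i$ must be negatively matched to exactly one other, which is impossible for an odd set. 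Parts (1), (2), (5) and (7) are described accurately enough, but the case analyses in (2) and (4) — in particular the split of the four positive-walk commons into the $2{+}2$ and $1{+}3$ subcases in (2), and the verification that the derived edge really does carry two positive walks in (4) — are exactly the content of the proof and are not yet carried out.
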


\begin{proof}
  (1) Let $v_iv_j$ be a positive edge of $\dot{G}$. If $a=5$, then there are five positive walks of length 2 between $v_i$ and $v_j$.  Suppose $N(v_i)\bigcap N(v_j)=\{v_k,v_l,v_m,v_s,v_t\}$ and $v_k,v_l\overset{+} {\sim}v_i,v_j$ and $v_m,v_s,v_t\overset{-} {\sim}v_i,v_j$. By the positive edges $v_iv_k$ and $v_iv_l$, we have $v_k\overset{+} {\sim}v_l$, $v_k\overset{-} {\sim}v_m,v_s,v_t$ and $v_l\overset{-} {\sim}v_m,v_s,v_t$. Then $d^-(v_m),d^-(v_s),d^-(v_t)\geq4$. A contradiction. In a similar way, we have $b\neq5$.

  (2) Let $v_iv_j$ be a positive edge of $\dot{G}$ and $a=4$. Then there are four positive walks of length 2 between $v_i$ and $v_j$.  Suppose $N(v_i)\bigcap N(v_j)=\{v_k,v_l,v_m,v_s\}$. There are two cases: Two vertices of $\{v_k,v_l,v_m,v_s\}$ are positively adjacent to $v_i,v_j$; Only one vertex of $\{v_k,v_l,v_m,v_s\}$ is positively adjacent to $v_i,v_j$. For the second case, we suppose $v_k\overset{+} {\sim}v_i,v_j$  and  $v_l, v_m,v_s\overset{-} {\sim}v_i,v_j$. Then we have $v_{t_1}\overset{+} {\sim}v_i$ and $v_{t_2}\overset{+} {\sim}v_j$, where $v_{t_1}$ and $v_{t_2}$ are the remaining neighbours of $v_i$ and $v_j$, respectively. Then it must have $v_{t_1}\overset{-} {\sim}v_l,v_m,v_s$ and $v_{t_2}\overset{-} {\sim}v_l,v_m,v_s$ by positive edges $v_i v_{t_1}$ and $v_j v_{t_2}$. But $d^-(v_l)=d^-(v_m)=d^-(v_s)=4$ now. A contradiction. Then we only consider the first case.

   We suppose that $v_k,v_l\overset{+} {\sim}v_i,v_j$  and  $v_m,v_s\overset{-} {\sim}v_i,v_j$ and consider the positive edges $v_i v_k$ and $v_i v_l$. For the positive edge $v_i v_k$, $v_k$ cannot be adjacent to $v_m$ and $v_s$ at the same time. Otherwise, there will be one vertex which is incident with four negative edges since $v_l$ is adjacent to at least one of $v_m,v_s$. A contradiction. If $v_k\overset{-} {\sim}v_m$, there are two negative walks of length 2 between $v_i$ and $v_m$.  There must be the other two  negative walks of length 2 between $v_i$ and $v_m$ by Lemma \ref{le2}, then we have $b\leq-3$. For $v_k\overset{-} {\sim}v_s$, we have the same result.

   By Lemma \ref{le1}, we have $a\leq-3$ if $b=4$.

   (3) Let $v_iv_j$ be a positive edge of $\dot{G}$. If $a=-3$, then there are four negative walks and one positive walk of length 2 between $v_i$ and $v_j$.  Let $v_k,v_l,v_m,v_s, v_t$ be the common neighbours of $v_i,v_j$ and suppose that $v_k, v_l\overset{+} {\sim}v_i$, $v_k, v_l\overset{-} {\sim}v_j$, $v_m\overset{-} {\sim}v_i, v_j$, $v_s, v_t\overset{-} {\sim}v_i$, $v_s, v_t\overset{+} {\sim}v_j$. By the positive edges $v_i v_k$, $v_iv_l$, we have $v_k\sim v_l,v_m,v_s, v_t$ and $v_l\sim v_m,v_s, v_t$. And by the positive edges $v_j v_s$, $v_jv_t$, we have $v_m\sim v_s, v_t$ and $v_s\sim v_t$. So $\dot{G}$ is complete.

    By Lemma \ref{le1}, we have  $b\neq-3$.

   (4) There are at most four negative walks of length 2 between any two adjacent vertices in $\dot{G}$ by Lemma \ref{le2} and $\rho=0$. Therefore, $a,b\geq-4$.

    Suppose that $v_iv_j$ is a negative edge of $\dot{G}$ and $b=-4$. Then there are four negative walks of length 2 between $v_i$ and $v_j$. Let $v_k,v_l,v_m,v_s$ be the common neighbours of $v_i,v_j$. And we suppose that $v_k, v_l\overset{-} {\sim}v_i$, $v_k, v_l\overset{+} {\sim}v_j$ and $v_m, v_s\overset{+} {\sim}v_i$,  $v_m, v_s\overset{-} {\sim}v_j$. We consider the negative edge $v_jv_s$, then $v_s$ must be negatively adjacent to at least one of $v_k,v_l$.  So there will be two positive walks of length 2 between $v_j$ and $v_l$ or  $v_j$ and $v_k$. Therefore, $a\geq0$.

    By Lemma \ref{le1}, we have $b\geq 0$ if $a=-4$.

   (5) Suppose that $v_iv_j$ is a negative edge of $\dot{G}$ and $b=3$. Then there are three positive walks of length 2 between $v_i$ and $v_j$ and at least one walk consists of two positive edges. That is to say there exists  at least one unbalanced triangle. Therefore, there will be two positively adjacent vertices with two negative walks of length 2 between them. Then  $a\leq1$.

     By Lemma \ref{le1}, we have $b\leq1$ if $a=3$.

   (6) Let $v_iv_j$ be a negative edge of  $\dot{G}$. Then there are two negative walks of length 2 between $v_i$ and $v_j$ if $b=-2$. Suppose that $N(v_i)\bigcap N(v_j)=\{v_k,v_l\}$ such that  $v_k\overset{+} {\sim}v_i$, $v_k\overset{-} {\sim}v_j$, $v_l\overset{-} {\sim}v_i$, $v_l\overset{+} {\sim}v_j$ and $v_m$ is the remaining negative neighbour of  $v_i$. Then it must have  $v_m\overset{+} {\sim}v_l$ by $b=-2$ and $v_m\nsim v_j$. But the situation of these two negative walks of length 2 between $v_i$ and $v_l$ is contradict to Lemma \ref{le2}.

   We also have $a\neq-2$ by Lemma \ref{le1}.

   (7) Let $v_iv_j$ be a negative edge of $\dot{G}$ and $b=-1$. Then there are two negative walks and one positive walk of length 2 between $v_i$ and $v_j$. Suppose $N(v_i)\bigcap N(v_j)=\{v_k,v_l,v_m\}$ and $v_k\overset{+} {\sim}v_i$, $v_k\overset{-} {\sim}v_j$, $v_l\overset{-} {\sim}v_i$, $v_l\overset{+} {\sim}v_j$. For vertex $v_m$, we consider negative edge $v_iv_m$ if $v_m\overset{-} {\sim}v_i,v_j$. Since there is one positive walk of length 2  between $v_i$ and $v_m$, they need two negative walks of length 2. Therefore, we have $v_m\overset{+} {\sim}v_l$. But there will be other two negative walks of length 2 between $v_l$ and $v_i$ by the existing  two negative walks of length 2 between them and Lemma \ref{le2}. Then it will lead to $b\leq-3$. A contradiction. So it must be $v_m\overset{+} {\sim}v_i,v_j$. Now we consider positive edge $v_iv_m$. Since there is one negative walk of length 2 between $v_i$ and $v_m$, we have $a\leq1$ by Lemma \ref{le2}.

  By Lemma \ref{le1}, we have $b\leq1$ if $a=-1$.
\end{proof}

%

According to the above Lemma and the condition of $a\geq b$, the values of
$(a,b)$ we need to concern are: $(4,-4)$, $(3,1)$, $(3,0)$,  $(3,-4)$,  $(2,2)$, $(2,1)$, $(2,0)$, $(2,-4)$,  $(1,1)$, $(1,0)$, $(1,-1)$, $(1,-4)$, $(0,0)$, $(0,-1)$, $(0,-4)$, $(-1,-1)$.

\begin{lemma}
Let $\dot{G}\in \mathcal{C}_1\bigcup\mathcal{C}_4\bigcup\mathcal{C}_5$ be  a connected and non-complete 6-regular and 0 net-regular SRSG. If $(a,b)= (4,-4)$, then $\dot{G}$ is isomorphic to $\dot{S}^4_8$ (Figure \ref{864-4-6}) with parameters $(8,6,4,-4,-6)$.
\end{lemma}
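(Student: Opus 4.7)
The plan is to first use equation \eqref{6-0-abcn} to shortlist $(n,c)$, then use $a=4$ and $b=-4$ with Lemma \ref{le2} to pin down the local sign structure around every edge, and finally show $n=8$ and reconstruct the signing.

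Substituting $a=4$, $b=-4$ into \eqref{6-0-abcn} yields $c(n-7)=-6$, hence $(n,c)\in\{(8,-6),(9,-3),(10,-2),(13,-1)\}$. For any positive edge $v_iv_j$, let $p,q$ denote the numbers of positive and negative walks of length $2$ between $v_i$ and $v_j$. Then $p-q=a=4$, $p+q\le 5$ (there are at most $r-1=5$ common neighbours), and $q$ is even by Lemma \ref{le2}; hence $p=4$, $q=0$, so $v_i,v_j$ share exactly $4$ common neighbours and at each one the two incident edges have the same sign. Analogously, at a negative edge one finds $p=0$, $q=4$, and Lemma \ref{le2} forces the $4$ common neighbours to split as two with $v_i\overset{+}{\sim}v_l\overset{-}{\sim}v_j$ and two with $v_i\overset{-}{\sim}v_l\overset{+}{\sim}v_j$.

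Next I would show $n=8$. For a positive edge $v_iv_j$ the inclusion–exclusion gives $|N(v_i)\cup N(v_j)|=6+6-4=8$, so $n\ge 8$. Ruling out $n\in\{9,10,13\}$ is the main obstacle. My plan there is to combine the $++/--$ partition of the $4$ common neighbours of a positive edge (where $|N^+(v_i)\cap N^+(v_j)|\in\{1,2\}$ because $|N^+|=3$ and $|N^-|=3$) with the $2+2$ partition at a negative edge, to track the signs of edges between these distinguished common neighbours. Since $c\in\{-3,-2,-1\}$ imposes a definite walk count between each pair of non-adjacent vertices, I would compare these forced sign patterns at non-adjacent pairs against the $3$-regularity of $G^+$ and $G^-$ to derive a contradiction in each case, using in particular that a non-neighbour of $v_j$ lying in $N^+(v_i)$ is forced by the negative-edge analysis.

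For $n=8$ every vertex has a unique non-neighbour, so $G=K_{4\times 2}$ with four non-adjacent pairs. Choosing a positive edge $v_iv_j$ and using $|N^+(v_i)\cap N^+(v_j)|\in\{1,2\}$, one quickly sees that the choice $x=2$ is forced (otherwise $N^-(v_i)=N^-(v_j)$ has size $3$, which combined with the $2+2$ split at every negative edge violates $3$-regularity of $G^-$). This determines $G^+$ as a union of two disjoint $K_4$'s (one containing $v_i,v_j$ and their two common positive neighbours, the other on the four remaining vertices), and then $G^-$ is uniquely determined as its complement inside $K_{4\times 2}$. Finally I would verify that the signed graph thus obtained is indeed $\dot{S}^4_8$ and, by direct calculation of $A^2$, that its parameters are $(8,6,4,-4,-6)$, consistent with $(n,c)=(8,-6)$ from the first step.
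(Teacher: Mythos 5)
Your opening steps are correct and match the paper's local analysis: from $a=4$ you correctly get exactly four common neighbours of a positively adjacent pair, all joined with equal signs, and from $b=-4$ together with Lemma \ref{le2} you get the $2+2$ split at a negative edge. However, the heart of your argument --- ruling out $n\in\{9,10,13\}$ from the list produced by \eqref{6-0-abcn} --- is not a proof but an announced plan (``My plan there is to combine \dots to derive a contradiction in each case''). No actual contradiction is exhibited for any of the three orders, and it is not clear that comparing ``forced sign patterns at non-adjacent pairs against the $3$-regularity of $G^+$ and $G^-$'' succeeds without substantial further case analysis. Since this is precisely the step that determines $n=8$, the proposal has a genuine gap. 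The $x=1$ elimination in your $n=8$ paragraph is also asserted rather than argued (the stated reason, that $N^-(v_i)=N^-(v_j)$ of size $3$ ``violates $3$-regularity of $G^-$'', does not follow immediately; one has to bring in the structure of the unique non-neighbours in $K_{4\times 2}$ to reach a contradiction).

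The paper takes a different and more economical route that bypasses the $(n,c)$ enumeration entirely: starting from a negative edge $v_iv_j$, it uses exactly the local data you derived (four common neighbours $v_k,v_l,v_m,v_s$ with the $2+2$ split, plus one further positive neighbour $v_{t_1}$ of $v_i$ and $v_{t_2}$ of $v_j$) and then applies the $a=4$ condition to the positive edges $v_iv_{t_1}$, $v_jv_{t_2}$, $v_iv_k$, etc., to force all adjacencies and signs among these eight vertices. All six of $v_i,v_j,v_k,v_l,v_m,v_s$ reach degree $6$, and the missing positive walk between $v_{t_1}$ and $v_k$ forces $v_{t_1}\overset{-}{\sim}v_{t_2}$, so every vertex is saturated and connectedness gives $n=8$ with the signing determined up to isomorphism. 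You could repair your proof by replacing the enumeration-and-elimination step with this direct construction; the walk-count facts you have already established are exactly what it needs.
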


\begin{proof}
  We suppose  $v_iv_j$ is a negative edge of  $\dot{G}$. If $(a,b)=(4,-4)$,  then there are four negative walks of length 2 between $v_i$ and $v_j$. Suppose that $N(v_i)=\{v_j, v_k,v_l,v_m,v_s,v_{t_1}\}$,  $N(v_j)=\{v_i, v_k,v_l,v_m,v_s,v_{t_2}\}$ and $v_k, v_l\overset{+} {\sim}v_i$, $v_k, v_l\overset{-} {\sim}v_j$, $v_m, v_s\overset{-} {\sim}v_i$, $v_m, v_s\overset{+} {\sim}v_j$. Then $v_i\overset{+} {\sim}v_{t_1}$ and $v_j\overset{+} {\sim}v_{t_2}$. Since $a=4$, we have $v_{t_1}\overset{+} {\sim}v_k, v_l$, $v_{t_1}\overset{-} {\sim}v_m, v_s$, $v_{t_2}\overset{-} {\sim}v_k, v_l$,  $v_{t_2}\overset{+} {\sim}v_m, v_s$. By considering the edges $v_iv_m$,  $v_iv_s$ and  $v_iv_k$ in turn, we have two cases as shown in Figure \ref{Two cases}. Now, $d(v_i)=d(v_j)=d(v_k)=d(v_l)=d(v_m)=d(v_s)=6$ and we  need one positive walk of length 2 between  $v_{t_1}$ and $v_k$ in these two cases, so it must have $v_{t_1}\overset{-} {\sim}v_{t_2}$. It is easy to see that these two graphs are isomorphic. So $\dot{G}$ is  isomorphic to $\dot{S}^4_8$ (Figure \ref{8-6.864-4-6}) and has parameters $(8,6,4,-4,-6)$.
\end{proof}

\begin{figure}[h]
  \centering
  \includegraphics[width=12cm]{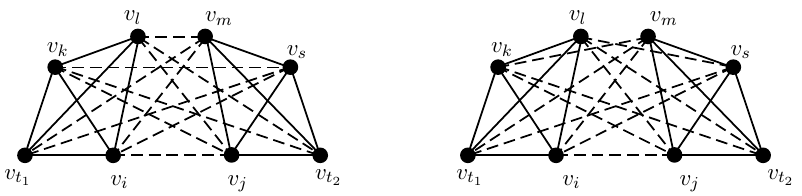}
  \caption{Two cases for  $(a,b)=(4,-4)$}\label{Two cases}
\end{figure}

\begin{figure}[h]
  \centering
  \includegraphics[width=5.5cm]{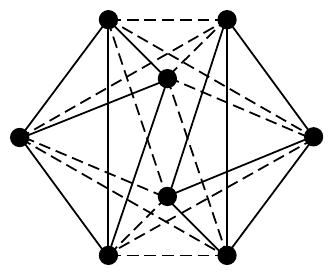}
  \caption{$\dot{S}^4_8$}\label{864-4-6}
\end{figure}

\begin{lemma}\label{6-0-31}
   If $\dot{G}\in \mathcal{C}_1\bigcup\mathcal{C}_4\bigcup\mathcal{C}_5$ is a connected and non-complete 6-regular and 0 net-regular SRSG, then $(a,b)\neq(3,1), (3,0)$.
\end{lemma}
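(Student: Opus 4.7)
The plan is to pin down, for every positive edge and then every negative edge, the exact split between positive walks and negative walks of length $2$ using the constraint $a=3$ together with the parity restriction of Lemma~\ref{le2}, and then to run a short local argument inside $N^-(v_i)$ in each of the two cases $(a,b)=(3,1)$ and $(a,b)=(3,0)$. First I would let $v_iv_j$ be any positive edge and write its common neighbours as $p$ giving positive walks of length $2$ and $q$ giving negative walks of length $2$, so that $p-q=3$, $p+q\le r-1=5$, and $q$ is even by Lemma~\ref{le2}; the only admissible pair is $(p,q)=(3,0)$. Hence every common neighbour of two positively adjacent vertices is joined to both by edges of the same sign. From this I would derive the dual statement for negative edges: if $v_iv_j$ is a negative edge and $v_k$ is a common neighbour with $v_k\overset{+}{\sim}v_i$ and $v_k\overset{+}{\sim}v_j$, then the positive edge $v_iv_k$ would have $v_j$ as a mixed-sign common neighbour, contradicting the previous step. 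Consequently every positive walk of length $2$ on a negative edge $v_iv_j$ must run through a common neighbour negatively adjacent to both endpoints, so the number of such walks is at most $|N^-(v_i)\setminus\{v_j\}|=2$.

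For $(a,b)=(3,1)$, the counting $p-q=1$, $q$ even, $p\le 2$ leaves only $(p,q)=(1,0)$. Each negative edge incident to $v_i$ therefore has exactly one common neighbour, negatively adjacent to both endpoints, and no mixed-sign common neighbour at all. Writing $N^-(v_i)=\{v_j,v_m,v_s\}$ and applying this to each of $v_iv_j$, $v_iv_m$, $v_iv_s$, I would read off that inside the triple $\{v_j,v_m,v_s\}$ there are no positive adjacencies and each vertex is negatively adjacent to exactly one of the other two; that is, the induced negative subgraph on $\{v_j,v_m,v_s\}$ is $1$-regular on three vertices, which contradicts the handshake lemma.

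For $(a,b)=(3,0)$ the admissible splits on a negative edge are $(p,q)=(0,0)$ or $(2,2)$. In the $(2,2)$ case the two positive walks saturate the upper bound of two on both-negative commons, which forces $N^-(v_j)=\{v_i,v_m,v_s\}$ and $N^-(v_i)=\{v_j,v_m,v_s\}$; I would then check directly that $N^+(v_i)\cap N^-(v_j)$ and $N^-(v_i)\cap N^+(v_j)$ are both empty, leaving no mixed-sign commons, contrary to the required $q=2$. In the $(0,0)$ case every negative edge at $v_i$ has empty common neighbourhood in $G$; so for any positive neighbour $v_k$ of $v_i$, no vertex of $N^-(v_i)$ can lie in $N(v_k)$, and the common neighbours of the positive edge $v_iv_k$ are confined to $N^+(v_i)\setminus\{v_k\}$, a set of only two vertices, which is incompatible with the three common neighbours demanded by $a=3$.

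The main obstacle is really the opening step: noticing that $a=3$ together with the parity of negative walks in Lemma~\ref{le2} forces the rigid structural rules ``no mixed-sign common neighbours on positive edges'' and ``no both-positive common neighbours on negative edges''. Once these are in place, the finish in each of $(3,1)$ and $(3,0)$ reduces to a handful of lines of counting inside the negative neighbourhood of a single vertex.
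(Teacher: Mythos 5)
Your proposal is correct. It shares with the paper the decisive opening observation: for $a=3$ the split $(p,q)$ of positive/negative walks of length $2$ on a positive edge satisfies $p-q=3$, $p+q\le 5$ and $q$ even (Lemma \ref{le2}), forcing $(p,q)=(3,0)$, and hence no common neighbour of a negative edge can be positively adjacent to both endpoints. From there the two arguments diverge. The paper works with an explicit configuration: since $d^+=3$, some positive edge has a common neighbour negatively adjacent to both ends, producing a balanced triangle with two negative edges; Lemma \ref{le2} then supplies a second mixed-sign common neighbour $v_l$ on the negative edge of that triangle, so $q\ge 2$ there, and for $b=1$ or $b=0$ this forces a both-positive common neighbour on that negative edge, contradicting the opening observation. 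You instead classify the admissible splits on \emph{every} negative edge ($(1,0)$ for $b=1$; $(0,0)$ or $(2,2)$ for $b=0$) and finish with different local counts: a handshake-parity contradiction in the $1$-regular negative graph induced on $N^-(v_i)$ for $(3,1)$, and a neighbourhood-size count (at most $2<3$ available common neighbours for a positive edge) for $(3,0)$. Your route avoids having to exhibit the triangle-plus-$v_l$ configuration and is somewhat more systematic; the paper's route gets the needed lower bound $q\ge 2$ on a specific negative edge for free from that configuration. Both are complete and correct.
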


\begin{proof}
If $a=3$, there are only three positive walks of length 2 between any two positively adjacent vertices. And  at least one of these three positive walks is $-P_3$ by the net-degree. So $\dot{G}$ must contain at least one balanced triangle with two negative edges. Denote this triangle by $v_iv_jv_k$ and suppose $v_i\overset{-} {\sim}v_j$, $v_k\overset{-} {\sim}v_j$, $v_i\overset{+} {\sim}v_k$. Since there is one negative walk of length 2 between $v_i$  and $v_j$, there is another common neighbour  of $v_i$ and $v_j$, denoted by $v_l$, such that $v_l\overset{-} {\sim}v_i$ and $v_l\overset{+} {\sim}v_j$ by Lemma \ref{le2}.

If $b=1$, then $v_i$ and $v_j$ need three positive walks of length 2 and only one of these walks is $-P_3$ by vertex net-degree. Suppose that $v_m,v_s\overset{+} {\sim}v_i,v_j$, $v_t\overset{-} {\sim}v_i,v_j$ without loss of generality. But there is one negative walk of length 2 between $v_m$ and $v_i$, which contradicts $a=3$.

If $b=0$, then $v_i$ and $v_j$ need two positive walks of length 2 and at least one of these walks is $+P_3$ by vertex net-degree. It is also contradictory to $a=3$ by the same way as $b=1$.
\end{proof}

 If $\dot{G}\in \mathcal{C}_1\bigcup\mathcal{C}_4\bigcup\mathcal{C}_5$ is a connected and non-complete 6-regular and 0 net-regular SRSG with $(a,b)=(3,-4)$, we have $c(n-7)=-3$ by equation \eqref{6-0-abcn}. So the possible parameters sets are $(10,6,3,-4,-1)$ and $(8,6,3,-4,-3)$. Since $(a,b)=(3,-4)$, two adjacent vertices in $G$ have three or four common neighbours. And $c=-1$ implies that there are three common neighbours between any two non-adjacent vertices in $G$.  We get that all 6-regular graphs with order 8 ( $G_8$ of Figure \ref{8-6.864-4-6}) and order 10 (Figure \ref{10-6}) cannot satisfy this condition by some simple computations.

 \begin{lemma}\label{6-0-22}
  Let $\dot{G}\in \mathcal{C}_1\bigcup\mathcal{C}_4\bigcup\mathcal{C}_5$ be a connected and non-complete 6-regular and 0 net-regular SRSG. If $(a,b)=(2,2)$, then $\dot{G}=\dot{S}_{16}$ (Figure \ref{16,6,2,2,-2}).
\end{lemma}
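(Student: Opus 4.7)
My plan is to first enumerate the admissible parameter sets from \eqref{6-0-abcn}, then rule out all but one by direct counting arguments, and finally recover $\dot{S}_{16}$ from the resulting rigid combinatorial structure.

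Substituting $a=b=2$ into \eqref{6-0-abcn} yields $c(n-7)=-18$. Since $G^-$ is $3$-regular, $n$ must be even, so $(n,c)\in\{(8,-18),(10,-6),(16,-2)\}$. The case $(8,-18)$ is impossible because the number of signed walks of length $2$ between two vertices has absolute value at most $r=6$, whence $|c|\le 6$.

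To eliminate $(10,-6)$, observe that for any non-adjacent pair $v_i,v_j$ the identity $p-q=-6$ together with $p+q\le 6$ and Lemma~\ref{le2} (so $q$ even) forces $(p,q)=(0,6)$. Hence every non-adjacent pair shares six common neighbours, i.e.\ $N(v_i)=N(v_j)$. The relation ``same neighbourhood'' is then an equivalence relation on $V(G)$, and each class is easily seen to be an independent set of size $n-r=4$; but $4\nmid 10$, a contradiction.

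For the surviving case $(16,-2)$, Lemma~\ref{le2} together with the parameter values pins down the walk counts: every adjacent pair has exactly two positive and no negative walks of length $2$, and every non-adjacent pair has exactly two negative and no positive walks of length $2$. In particular, the underlying graph $G$ is strongly regular with parameters $(16,6,2,2)$, so $G$ is either the Shrikhande graph or the rook graph $L_2(4)=K_4\times K_4$. Moreover, for every edge $v_iv_j$ the two common neighbours $v_k,v_l$ satisfy $\sigma(v_iv_k)=\sigma(v_jv_k)$ and $\sigma(v_iv_l)=\sigma(v_jv_l)$, so every triangle of $\dot{G}$ is balanced; dually, the two common neighbours of every non-edge give unbalanced $4$-cycles.

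Finally I would fix a vertex and an incident edge, then propagate the sign rules above: each new edge of a triangle has its sign forced by the two already-signed edges incident to the common neighbour, while the non-edge constraint determines the remaining signs. I expect the main obstacle to be this last step, namely ruling out one of the two candidate underlying graphs and showing that the other carries, up to isomorphism, exactly one signing that satisfies the local balance/unbalance constraints everywhere; this should reduce to a finite but careful case analysis, after which the resulting signed graph is checked to coincide with $\dot{S}_{16}$ of Figure~\ref{16,6,2,2,-2}.
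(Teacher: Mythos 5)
Your reduction to $n=16$, $c=-2$ is sound, and it takes a different route from the paper (which first proves that $G^{+}$ and $G^{-}$ are disjoint unions of $K_4$'s and then invokes $4\mid n$ in \eqref{6-0-abcn}); your eliminations of $n=8$ via $|c|\le 6$ and of $n=10$ via the equivalence classes of equal neighbourhoods are both correct. However, two things go wrong afterwards. First, for a non-adjacent pair the parameters do \emph{not} pin down the walk counts: $p-q=c=-2$ with $q$ even also admits $(p,q)=(2,4)$, i.e.\ six common neighbours, so the assertion that $G$ is strongly regular with parameters $(16,6,2,2)$ needs an extra argument (for instance: the $6\cdot 3=18$ edges leaving $N(v)$ must be distributed over nine non-neighbours each already receiving at least two, which forces exactly two for each). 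Incidentally, the two negative walks joining a non-adjacent pair close up to a \emph{positive} (balanced) $4$-cycle, not an unbalanced one as you state.

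Second, and decisively, the last step --- ruling out one of the two strongly regular graphs with parameters $(16,6,2,2)$ and proving that the survivor carries an essentially unique admissible signing --- is only announced as a plan, so the proposal stops short of a proof exactly where the content lies. You are one observation away from closing it: your relations on the edge signs of a triangle show every triangle is homogeneous, and since a positively adjacent pair has exactly two common neighbours, both joined positively to both endpoints, each vertex together with its three positive neighbours induces a $+K_4$; hence $G^{+}$ (and likewise $G^{-}$) is a disjoint union of four $K_4$'s. This immediately eliminates the Shrikhande graph, which is locally $C_6$ and contains no $K_4$, and forces $G$ to be the rook's graph $L_2(4)$ with the positive $K_4$'s as rows and the negative $K_4$'s as columns (a negative $K_4$ meets each positive $K_4$ in at most, hence exactly, one vertex), which is precisely $\dot{S}_{16}$. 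That is the paper's argument, and without it or an equivalent case analysis your write-up does not establish the lemma.
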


\begin{proof}
   If $(a,b)=(2,2)$, there are only two positive walks of length 2 between any two adjacent vertices in $\dot{G}$.  Let $v_iv_j$ be a positive edge of $\dot{G}$. We claim that the two positive walks between $v_i$ and $v_j$  must be $+P_3$. Otherwise, suppose $v_m\in N(v_i) \bigcap N(v_j)$ and $v_m\overset{-} {\sim}v_i,v_j$. There is one negative walk of length 2 between  $v_i$ and $v_m$, which is contradictory to $b=2$. Suppose $v_k,v_l\in N(v_i) \bigcap N(v_j)$ and so $v_k,v_l\overset{+} {\sim}v_i,v_j$. We have $v_k\overset{+} {\sim}v_l$ by the positive edge  $v_iv_k$. Therefore, $G^+$ is the union of $K_4$. In a similar way, we get that $G^-$ is the union of $K_4$. So $4|n$.

   Since $(a,b)=(2,2)$, we have $c(n-7)=-18$ by equation \eqref{6-0-abcn}. So the possible parameters sets are: $(25,6,2,2,-1)$,  $(16,6,2,2,-2)$, $(13,6,2,2,-3)$ and $(10,6,2,2,-6)$. Since $4|n$, we only need consider the parameters $(16,6,2,2,-2)$. Suppose vertices $v_1,v_2,v_3,v_4$ form a $-K_4$ in $\dot{G}$ and $v_5,v_6,v_7\in N^+(v_1)$, $v_8,v_9,v_ {10}\in N^+(v_2)$, $v_{11},v_{12},v_{13}\in N^+(v_3)$, $v_{14},v_{15},v_{16}\in N^+(v_4)$. By the above proof, they form four $K_4$, respectively. Since $n=16$ and $G^-$ is the union of $K_4$, $\dot{G}$ has three other $-K_4$ which formed by $v_5,v_6,\dots, v_{15},v_{16}$. Without loss of generality, we suppose $v_5,v_8,v_ {11},v_{14}$ form a $-K_4$, so do  $v_6,v_9,v_{12},v_{15}$ and $v_7,v_{10},v_{13},v_{16}$. Therefore, we get the signed graph $\dot{S}_{16}$ in Figure \ref{16,6,2,2,-2} and it satisfies $c=-2$. Therefore, $\dot{G}$ is isomorphic to $\dot{S}_{16}$.
\end{proof}

\begin{figure}
  \centering
  \includegraphics[width=7cm]{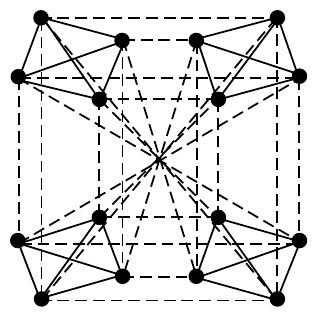}
  \caption{$\dot{S}_{16}$}\label{16,6,2,2,-2}
\end{figure}

\begin{lemma}\label{6-0-21.11}
If  $\dot{G}\in \mathcal{C}_1\bigcup\mathcal{C}_4\bigcup\mathcal{C}_5$ is a connected non-complete 6-regular and 0 net-regular SRSG, then $(a,b)\neq (2,1), (1,1)$.
\end{lemma}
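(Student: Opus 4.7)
The plan is to handle the two cases separately: in each case, equation \eqref{6-0-abcn} reduces $n$ to finitely many candidates, and I then rule each out by structural considerations combined with Lemma \ref{le2}.

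For $(a,b)=(2,1)$, the first step is to show that $G^+$ is a disjoint union of $+K_4$'s. Suppose toward contradiction that some positive edge $v_iv_j$ has a common neighbor $v_k$ with $v_k\overset{-}{\sim}v_i$ and $v_k\overset{-}{\sim}v_j$. Then on the negative edge $v_iv_k$, the common neighbor $v_j$ contributes a negative walk of length $2$; combined with $b=1$ and Lemma \ref{le2}, this forces $v_iv_k$ to have exactly $5$ common neighbors, the maximum possible. A count of $v_i$'s positive and negative neighbors among these $5$, again using Lemma \ref{le2}, then shows that at least one common neighbor $v_n$ of $v_i,v_k$ satisfies $v_n\overset{+}{\sim}v_i,v_k$. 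The positive edge $v_kv_n$ now has $v_i$ as a common neighbor giving a negative walk, which together with $a=2$ and Lemma \ref{le2} would demand at least $6$ common neighbors for $v_kv_n$---impossible. Hence every positive edge lies in a $+K_4$ and $4\mid n$. Equation \eqref{6-0-abcn} becomes $c(n-7)=-15$, leaving $n\in\{8,12\}$. For $n=8$, the two $+K_4$ blocks $V_1,V_2$ contain all negative edges between them, and a direct walk count on any negative edge yields $b=-4\ne 1$. For $n=12$, applying $b=1$ to a negative edge across blocks $V_1,V_2$ together with Lemma \ref{le2} forces $|N^-(v_i)\cap V_2|=|N^-(v_j)\cap V_1|=1$; since only two other blocks are available to each vertex for its negative neighbors, this gives $d^-\le 2$, contradicting $d^-=3$.

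For $(a,b)=(1,1)$, equation \eqref{6-0-abcn} gives $c(n-7)=-12$. Because $G^-$ is $3$-regular, $n$ is even, and because $|c|\le r=6$, the only admissible pair is $n=10$, $c=-4$. Lemma \ref{le2} and the walk bound then force every adjacent pair of $G$ to share $1$ or $5$ common neighbors and every non-adjacent pair to share exactly $4$. A double-count yields $\sum_{u<v}|N(u)\cap N(v)|=10\binom{6}{2}=150$; with $15$ non-edges contributing $60$ and the $30$ edges splitting into $\alpha$ ``light'' ($1$ common neighbor) and $\beta$ ``heavy'' ($5$ common neighbors), this gives $\alpha=\beta=15$. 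The crucial observation is that an edge with $5$ common neighbors has endpoints that are twins in $G$ (identical closed neighborhoods), so the $15$ heavy edges are the intra-class edges of the twin classes. Writing $n_k$ for the number of twin classes of size $k$, the conditions $\sum kn_k=10$ and $\sum\binom{k}{2}n_k=15$ admit only the partition $6+1+1+1+1$. But then the common closed neighborhood $A$ of the size-$6$ class $C$ has $7$ vertices, and the unique extra vertex $w\in A\setminus C$ must satisfy $N(w)=C$ and hence $N[w]=A$; so $w$ is itself twin to $C$ and the class must have size $7$, giving $\binom{7}{2}=21$ heavy edges and contradicting the count of $15$.

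The main obstacle is the propagation step in the $(2,1)$ case: one must show that a single ``bad'' $-P_3$ on a positive edge cascades through two applications of Lemma \ref{le2} into a violation of the common-neighbor bound of $5$ on a different positive edge, which requires carefully tracking the $(+,+)$-type common neighbor produced at the intermediate stage.
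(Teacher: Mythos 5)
Your proof is correct, but it takes a genuinely different route from the paper's. The paper argues entirely locally around a negative edge: it first shows (for $b=1$) that two negatively adjacent vertices have exactly one common neighbour, then that this common neighbour must be joined negatively to both ends (else a positively adjacent pair acquires a stray negative walk, impossible when $a\in\{1,2\}$), and finally chases the third negative neighbour $v_l$ of $v_i$ to a common neighbour $v_m$ that is forced positive to both by $d^-(v_i)=3$, so that the positive edge $v_iv_m$ carries a negative walk of length $2$ --- a contradiction that never invokes equation \eqref{6-0-abcn} or the value of $n$. You instead extract global structure: for $(2,1)$ you show $G^+$ decomposes into $+K_4$'s (your cascade through Lemma \ref{le2} checks out: the bad $-P_3$ forces $5$ common neighbours on $v_iv_k$, the degree count $d^-(v_i)=3$ leaves at least two $(+,+)$-type common neighbours, and the resulting positive edge would need $6$ common neighbours), then kill $n=8$ and $n=12$ by explicit walk counts between blocks; for $(1,1)$ you reduce to $(n,c)=(10,-4)$ and give a purely underlying-graph non-existence proof via the double count $\alpha=\beta=15$ and the closed-twin-class partition $\sum k n_k=10$, $\sum\binom{k}{2}n_k=15$, which indeed admits only $6+1+1+1+1$ and then self-destructs. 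Both arguments are sound; the paper's is shorter and uniform in $a$, while yours buys more --- the $(1,1)$ case shows no admissible underlying graph exists at all, independent of the signature. Two small points to make explicit if you write this up: in the $(2,1)$ case you should note that the $4$-positive-plus-$2$-negative option for a positive edge is excluded by the bound of $5$ common neighbours, and that the two $(+,+)$ common neighbours of a positive edge are themselves adjacent (apply the same count to $v_iv_k$) before concluding the $K_4$ decomposition.
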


\begin{proof}
   Let $v_iv_j$ be a negative edge of $\dot{G}$. If $b=1$, then there are two cases for $v_i, v_j$: Case 1. There is only one positive  walk of length 2 between  $v_i$ and $v_j$; Case 2. There are three positive  and two negative  walks of length 2 between  $v_i$ and $v_j$.

   We discuss the Case 2 firstly. Suppose $N(v_i)\bigcap N(v_j)=\{v_k, v_l, v_m, v_s, v_t\}$ and $v_k\overset{-} {\sim}v_i, v_j$,\; $v_l\overset{+} {\sim}v_i$, $v_l\overset{-} {\sim}v_j$,\; $v_m\overset{-} {\sim}v_i$, $v_m\overset{+} {\sim}v_j$ and $v_s, v_t\overset{+} {\sim}v_i, v_j$. We have $v_m\overset{-} {\sim}v_k$ for the negative edge $v_iv_m$ since $b=1$. Now there are two positive walks of length 2 between $v_i$ and $v_k$ but no negative walks of length 2. A contradiction.

   Therefore, there is only one positive walk of length 2 between each pair of negatively adjacent vertices.  Suppose $v_k$ is the common neighbour of $v_i$ and $v_j$.

   If  $a=2$,  there are only two positive walks of length 2 between any two positively adjacent vertices. By the same way as $b=1$, we get that there is only one positive walk of length 2 between each pair of positively adjacent vertices if $a=1$. In these two cases, we claim that $v_k\overset{-} {\sim}v_i,v_j$. Otherwise, there will be one  negative walk of length 2 between $v_i$ and $v_k$, which contradicts  both $a=2$ and $a=1$.

   Since $v_k\overset{-} {\sim}v_i,v_j$, both $v_iv_k$ and $v_jv_k$ satisfy $b=1$. So $v_k$ is not adjacent to remaining neighbours of both $v_i$ and $v_j$. Suppose $v_l$ is the remaining negative neighbour of $v_i$. There is one positive walk of length 2 between $v_i$ and $v_l$ by $b=1$. Denote the common neighbour of  $v_i$ and $v_l$ by $v_m$, then we have $v_m\overset{+} {\sim}v_i, v_l$ by the net-degree of $v_i$. But there is one negative walk of length 2 between $v_i$ and $v_m$, which contradicts  both $a=2$ and $a=1$.
\end{proof}

\begin{lemma}
If  $\dot{G}\in \mathcal{C}_1\bigcup\mathcal{C}_4\bigcup\mathcal{C}_5$ is a connected non-complete 6-regular and 0 net-regular SRSG, then $(a,b)\neq (2,0)$.
\end{lemma}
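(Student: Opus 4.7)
My plan is to show that the hypotheses $(a,b)=(2,0)$ force $G^{+}$ to be a vertex-disjoint union of $K_{4}$'s, and then to derive a contradiction from equation \eqref{6-0-abcn} together with $|c|\le r=6$.

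The first step is to exploit the positive edges. For a positive edge $v_{i}v_{j}$ I would split the common neighbours into four types, writing $p_{ij},q_{ij},u_{ij},w_{ij}$ for the numbers of common neighbours whose edges to $(v_{i},v_{j})$ have signs $(+,+),(-,-),(+,-),(-,+)$ respectively. Then $(A^{2})_{ij}=p_{ij}+q_{ij}-u_{ij}-w_{ij}=2$, while the total number of common neighbours $p_{ij}+q_{ij}+u_{ij}+w_{ij}$ is at most $r-1=5$. Combining these bounds with Lemma \ref{le2} (which gives $u_{ij}=w_{ij}$, so in particular $u_{ij}+w_{ij}$ is even) forces $u_{ij}=w_{ij}=0$ and $p_{ij}+q_{ij}=2$. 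In particular, $N^{+}(v_{i})\cap N^{-}(v_{j})=\emptyset=N^{-}(v_{i})\cap N^{+}(v_{j})$ for every positive edge $v_{i}v_{j}$.

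Next I fix a vertex $v_{i}$ with $N^{+}(v_{i})=\{a_{1},a_{2},a_{3}\}$, $N^{-}(v_{i})=\{d_{1},d_{2},d_{3}\}$ and analyse the negative edge $v_{i}d_{1}$. By the disjointness from the first step, any edge from $\{a_{1},a_{2},a_{3}\}$ to $d_{1}$ must be negative and no $a_{\ell}$ can be a $(+,+)$ common neighbour, so $p_{id_{1}}=0$. Letting $u_{id_{1}}$ count the $a_{\ell}$'s adjacent to $d_{1}$ and $w_{id_{1}},q_{id_{1}}$ count the $d_{j}$ ($j=2,3$) joined to $d_{1}$ by positive and negative edges respectively, the equation $(A^{2})_{id_{1}}=b=0$ and Lemma \ref{le2} give $u_{id_{1}}=w_{id_{1}}$ and $q_{id_{1}}=2u_{id_{1}}$. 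Since $w_{id_{1}}+q_{id_{1}}$ counts the adjacencies from $\{d_{2},d_{3}\}$ to $d_{1}$ and is therefore at most $2$, the inequality $3u_{id_{1}}\le 2$ forces $u_{id_{1}}=w_{id_{1}}=q_{id_{1}}=0$. Consequently $d_{1}$ is non-adjacent to every vertex of $\{a_{1},a_{2},a_{3},d_{2},d_{3}\}$, and by symmetry the three negative neighbours of any vertex form an independent set which is also disjoint from its three positive neighbours.

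Returning to the positive edge $v_{i}a_{1}$, its common neighbours must now lie in $\{a_{2},a_{3}\}$ and are, by the first step, joined to $a_{1}$ by positive edges. Since $p_{ia_{1}}+q_{ia_{1}}=2$ and $q_{ia_{1}}=0$, both $a_{2}$ and $a_{3}$ are positively adjacent to $a_{1}$, so $\{v_{i},a_{1},a_{2},a_{3}\}$ induces a $K_{4}$ in $G^{+}$; because $G^{+}$ is $3$-regular this $K_{4}$ is a connected component, and hence $G^{+}$ is a disjoint union of $K_{4}$'s, forcing $4\mid n$. Finally, equation \eqref{6-0-abcn} reduces to $c(n-7)=-12$, whose integer solutions with $n>7$ give $n\in\{8,9,10,11,13,19\}$; only $n=8$ is divisible by $4$, but then $c=-12$ contradicts $|c|\le r=6$. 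The main delicate point will be the second step, where the interplay of Lemma \ref{le2} with the cap $w_{id_{1}}+q_{id_{1}}\le 2$ has to be exploited carefully so that only the trivial solution survives; once that is pinned down, the $K_{4}$-structure and the final arithmetic contradiction follow in a few lines.
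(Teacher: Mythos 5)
Your proposal is correct and follows essentially the same route as the paper: both arguments show that negatively adjacent vertices have no common neighbours, deduce that $G^{+}$ is a disjoint union of $K_4$'s so that $4\mid n$, and then derive a contradiction from $c(n-7)=-12$. Your sign-type counting with Lemma \ref{le2} replaces the paper's ad hoc case analysis for the negative-edge step, and you additionally dispose of $n=8$ via $|c|\le r=6$, a case the paper's list of candidate orders silently omits.
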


\begin{proof}
If $\dot{G}$ has $(a,b)=(2,0)$, then there are only two positive walks of length 2 between any two positively adjacent vertices and there are no walks of length 2 or two positive and two negative walks of length 2 between any two negatively adjacent vertices in $\dot{G}$ by Lemma \ref{le2} and $r=6$.

Suppose $v_iv_j$ is a negative edge of $\dot{G}$. If there are two positive and two negative walks of length 2 between  $v_i$ and $v_j$, we suppose $N(v_i)\bigcap N(v_j)=\{v_k,v_l,v_m,v_s\}$ where $v_k\overset{+} {\sim}v_i$, $v_k\overset{-} {\sim}v_j$, $v_l\overset{-} {\sim}v_i$, $v_l\overset{+} {\sim}v_j$. For vertices $v_m$ and $v_s$, at least one of them is positively adjacent to both $v_i$ and $v_j$. Suppose $v_m\overset{+} {\sim}v_i,v_j$ without loss of generality. Then there is one negative walk of length 2 between $v_i$ and $v_m$, which contradicts $a=2$. Therefore, there are no walks of length 2 between any two negatively adjacent vertices in $\dot{G}$.

Since there are only two positive walks of length 2 between any two positively adjacent vertices, these two walks are $+P_3$. Therefore, $G^+$ is the union of $K_4$, i.e., $4|n$. We have $c(n-7)=-12$ by equation  \eqref{6-0-abcn} and then $n=19,13,11,10,9$. Obviously, they cannot satisfy $4|n$.
\end{proof}

\begin{lemma}\label{6-0-ab-1}
  Let $\dot{G}\in \mathcal{C}_1\bigcup\mathcal{C}_4\bigcup\mathcal{C}_5$ be a connected non-complete 6-regular and 0 net-regular SRSG, then $(a,b)\neq (2,-4)$, $(1,-4)$, $(0,-4)$.
\end{lemma}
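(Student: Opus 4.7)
The plan is to exploit the rigidity of the hypothesis $b=-4$. Since $|N(v_i)\cap N(v_j)|\leq 5$ for any negative edge $v_iv_j$, the equation $A^2_{ij}=-4$ forces exactly four common neighbors, all of whose length-two walks are negative, and by Lemma~\ref{le2} they split as two of type ``$+v_i,-v_j$'' and two of type ``$-v_i,+v_j$''. First I would fix a negative edge $v_iv_j$, label the four common neighbors as $v_k,v_l$ (both $-v_i,+v_j$) and $v_m,v_s$ (both $+v_i,-v_j$), and call $v_{t_1},v_{t_2}$ the remaining neighbors of $v_i$ and $v_j$. Zero net-degree gives $v_{t_1}\overset{+}{\sim}v_i$ and $v_{t_2}\overset{+}{\sim}v_j$, and since neither can be a fifth common neighbor, $v_{t_1}\nsim v_j$ and $v_{t_2}\nsim v_i$.

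Next I would apply the same rigid analysis to the negative edges $v_iv_k$ and $v_iv_l$. The vertex $v_j$ supplies one ``$-v_i,+v_k$'' common neighbor, so the second such neighbor must be the only remaining negative neighbor $v_l$ of $v_i$, forcing $v_kv_l$ to be positive. The two ``$+v_i,-v_k$'' common neighbors then lie in $\{v_m,v_s,v_{t_1}\}$, so $v_k$ has exactly two negative neighbors in this set, and the third element cannot be adjacent to $v_k$ at all (any such adjacency would yield either a positive walk on $v_iv_k$ violating $P=0$, or a fifth common neighbor). The analogous statement holds for $v_l$. Writing $w_k,w_l$ for the sixth neighbor of $v_k,v_l$ respectively, we conclude that $w_k,w_l$ are positively attached and lie outside $\{v_m,v_s,v_{t_1}\}$.

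The key step is to count length-two walks on the positive edge $v_kv_l$. The two $2$-subsets of $\{v_m,v_s,v_{t_1}\}$ consisting of the negative neighbors of $v_k$ and of $v_l$ must meet by pigeonhole, so some $v_x$ in this set is negatively adjacent to both. The common neighbors of $v_k$ and $v_l$ therefore consist of $v_i,v_j,v_x$, any further shared element of the two $2$-subsets, and $w_k$ in the case $w_k=w_l$; every such vertex contributes a positive walk of sign $(-)(-)$ or $(+)(+)$, so $P\geq 3$. A negative walk would require a common neighbor with opposite signs on $v_k$ and $v_l$. The negative neighbors of $v_k$ are $v_i$ and two elements of $\{v_m,v_s,v_{t_1}\}$, while the positive neighbors of $v_l$ are $v_j,v_k,w_l$, and the two descriptions are disjoint because $v_i\overset{-}{\sim}v_l$ and $w_l\notin\{v_m,v_s,v_{t_1}\}$. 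Symmetrically no ``positive to $v_k$, negative to $v_l$'' common neighbor exists. Hence $N=0$, and $a=P-N\geq 3$, contradicting $a\in\{0,1,2\}$.

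The main obstacle I expect is the bookkeeping in the last step: one must verify that no vertex in $\{w_k,w_l,v_{t_1},v_{t_2}\}$ can secretly supply a common neighbor of $v_k$ and $v_l$ with opposite signs, and this uses both the exact count $(P,N)=(0,4)$ on each of the three negative edges $v_iv_j,v_iv_k,v_iv_l$ incident with $v_i$ and the degree constraint $d^-(v_k)=d^-(v_l)=3$. Once these restrictions are in place, the pigeonhole step and the walk count apply uniformly to $a=0,1,2$, ruling out all three forbidden pairs simultaneously.
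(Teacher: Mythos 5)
Your proof is correct, and while the setup coincides with the paper's (fix a negative edge $v_iv_j$, use $b=-4$ together with Lemma \ref{le2} to pin down exactly four common neighbours split two--and--two, and force the edge between the two like-signed common neighbours to be positive), the concluding contradiction is obtained by a genuinely different and more uniform route. The paper splits into cases: for $(a,b)=(2,-4)$ it shows the two positive neighbours of $v_i$ among the common neighbours are isolated from $\{v_m,v_s,v_{t_1}\}$ and then counts that $v_k$ and $v_j$ have at most three common neighbours, contradicting $b=-4$; for $(a,b)=(1,-4)$ and $(0,-4)$ it forces $v_k,v_l\overset{-}{\sim}v_{t_1}$ and invokes the parity/pairing statement of Lemma \ref{le2} on the two negative walks between $v_i$ and $v_{t_1}$ to conclude $a\leq-3$. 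You instead analyse the negative edges $v_iv_k$ and $v_iv_l$ once, locate via pigeonhole a vertex of $\{v_m,v_s,v_{t_1}\}$ negatively adjacent to both $v_k$ and $v_l$, and then count walks on the positive edge $v_kv_l$ to get $P\geq3$ and $N=0$, hence $a\geq3$, killing all three parameter pairs simultaneously (and indeed every $a\leq2$ with $b=-4$). The bookkeeping you flag as the main risk does check out: the exact count $(P,N)=(0,4)$ on each negative edge at $v_i$ together with $d^-(v_k)=d^-(v_l)=3$ really does make the sets $\{v_i\}\cup S_k$ and $\{v_j,v_k,w_l\}$ disjoint, so no negative walk on $v_kv_l$ can arise. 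Your version buys a single argument in place of two, at the cost of the slightly more delicate verification that $w_k,w_l$ lie outside $\{v_m,v_s,v_{t_1}\}$; the paper's version is more elementary case-by-case but repeats the Lemma \ref{le2} pairing trick.
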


\begin{proof}
 Let $v_iv_j$ be a negative edge of  $\dot{G}$. Then there are four negative walks of length 2 between $v_i$ and $v_j$ if $b=-4$. Suppose that $N(v_i)=\{v_j,v_k,v_l,v_m,v_s,v_{t_1}\}$,  $N(v_j)=\{v_i,v_k,v_l,v_m,v_s,v_{t_2}\}$ and $v_k, v_l\overset{+} {\sim}v_i$,\;$ v_k, v_l\overset{-} {\sim}v_j$, $v_m, v_s\overset{-} {\sim}v_i$,\; $v_m, v_s\overset{+} {\sim}v_j$. Then $v_i\overset{+} {\sim}v_{t_1}$ and $v_j\overset{+} {\sim}v_{t_2}$. We consider the negative edge  $v_iv_m$. It must have $v_m\overset{+} {\sim} v_s$. Similarly, we have $v_k\overset{+} {\sim}v_l$ by the negative edge $v_jv_k$.

 If $(a,b)=(2,-4)$, there are only two  positive walks of length 2 between any two positively adjacent vertices in $\dot{G}$. Now, the positive edges $v_iv_k$ and $v_iv_l$ satisfy $a=2$. Then $v_k,v_l\nsim v_m,v_s,v_{t_1}$. But there are at most three common neighbours between  $v_k$ and $v_j$ for the negative edge $v_jv_k$, which is contradictory to $b=-4$.

%
%

   If $(a,b)=(1,-4)$, then there are one positive walk of length 2 or three positive  and two negative walks of length 2 between any two positively adjacent vertices. We have $v_k,v_l\overset{-} {\sim}v_{t_1}$ for the  positive edges $v_iv_k$ and $v_iv_l$. Now, there are two negative walks of length 2 between $v_{t_1}$ and $v_i$. But we get that $a\leq-3$ according to the situation of these two walks and Lemma \ref{le2}. A contradiction.

   If $(a,b)=(0,-4)$,  then there are no walks of length 2 or two positive  and two negative walks of length 2 between any two positively adjacent vertices. By the  same way as the case of $a=1$, we also have   $v_k,v_l\overset{-} {\sim}v_{t_1}$ and then $a\leq-3$. A contradiction.

\end{proof}

\begin{lemma}
  If $\dot{G}\in \mathcal{C}_1\bigcup\mathcal{C}_4\bigcup\mathcal{C}_5$ is a connected and non-complete 6-regular and 0 net-regular SRSG, then $(a,b)\neq(1,0)$, $(1,-1)$.
\end{lemma}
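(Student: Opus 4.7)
The plan is to rule out each parameter pair by a careful case analysis of the distributions of positive and negative walks of length $2$ between adjacent vertices. By Lemma \ref{le2}, for a signed graph in $\mathcal{C}_1\cup\mathcal{C}_4\cup\mathcal{C}_5$ the number of negative walks of length $2$ between any two vertices is an even number $2k$, with $k$ common neighbours joined to $v_i$ by a positive edge and to $v_j$ by a negative edge, and another $k$ with the reverse pattern. Combined with $r=6$ and net-degree $0$, this severely restricts the local structure at every edge.

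Writing $(p,n)$ for the numbers of positive and negative walks of length $2$ between adjacent vertices, we have $p-n=a$ (resp.\ $b$), $n$ even, and $p+n\le 5$. Thus $a=1$ forces $(p,n)\in\{(1,0),(3,2)\}$ on every positive edge; $b=0$ forces $(p,n)\in\{(0,0),(2,2)\}$ on every negative edge; and $b=-1$ forces $(p,n)=(1,2)$ on every negative edge. These lists are the starting point for both cases.

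For $(a,b)=(1,0)$ I would fix a negative edge $v_iv_j$ and split on the two subcases above. In the subcase $(0,0)$ the six other neighbours of $v_i$ are disjoint from those of $v_j$, and I would pick a positive neighbour $v_k$ of $v_i$; each pair $v_i,v_k$ shares either one or five common neighbours, and the requirement that every negative edge from $v_k$ obeys $b=0$ forces disjoint negative neighbourhoods that are incompatible with $G^-$ being $3$-regular on a small vertex set. In the subcase $(2,2)$ I would label $N(v_i)\cap N(v_j)=\{v_k,v_l,v_m,v_s\}$ with the sign pattern dictated by Lemma \ref{le2} ($v_k\overset{+}{\sim}v_i,\overset{-}{\sim}v_j$; $v_l\overset{-}{\sim}v_i,\overset{+}{\sim}v_j$; $v_m,v_s$ of matching sign to both), then inspect a positive edge such as $v_iv_m$ or $v_iv_k$ to produce either a walk count incompatible with $a=1$ or a vertex whose net-degree is not $0$.

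For $(a,b)=(1,-1)$ the three common neighbours $\{v_k,v_l,v_m\}$ of a negative edge $v_iv_j$ must, by Lemma \ref{le2}, satisfy $v_k\overset{+}{\sim}v_i$, $v_k\overset{-}{\sim}v_j$, $v_l\overset{-}{\sim}v_i$, $v_l\overset{+}{\sim}v_j$, with $v_m$ joined to both with the same sign. I would split on this sign of $v_m$ and analyse the edge $v_iv_m$ under $a=1$ or $b=-1$: either of the edges $v_iv_l$ or $v_iv_k$ then already carries two negative walks of length $2$, so applying Lemma \ref{le2} a second time forces two further negative walks along it, which pushes $a$ (or $b$) down to $-3$ or below and contradicts $a=1$ (or $b=-1$). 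The main obstacle is the combinatorial branching from the two admissible $(p,n)$ patterns at positive edges together with the two sign choices for $v_m$; the plan is to pin down the signed common-neighbour pattern of one carefully chosen negative edge first and then propagate sign constraints to nearby edges until an excess walk or a wrong vertex net-degree appears.
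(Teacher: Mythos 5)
Your reduction of the walk-count patterns ($a=1$ allows $(p,n)\in\{(1,0),(3,2)\}$ on positive edges, and so on) is correct, but your proof never eliminates the $(3,2)$ pattern, and that omission undermines almost every contradiction you claim. The paper's proof runs entirely on the fact, imported from the proof of Lemma \ref{6-0-21.11}, that $a=1$ forces exactly one positive walk and no negative walks of length $2$ on \emph{every} positive edge. With that in hand, a positive edge acquiring even one negative walk of length $2$ is an instant contradiction; without it, it is not. Concretely: in your $(2,2)$ subcase for $b=0$, a common neighbour positively adjacent to both ends of the negative edge only shows that some positive edge carries a negative walk of length $2$, which is perfectly consistent with the $(3,2)$ pattern you left on the table; and in the $(1,-1)$ case you cannot even pin down the sign of the midpoint of the positive walk between $v_i$ and $v_j$ --- the paper forces it to be a $-P_3$ midpoint precisely because a $+P_3$ midpoint would put a negative walk on a positive edge. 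So the first, and missing, step is to prove that $(3,2)$ is impossible when $a=1$.

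Two further points. In the $(1,-1)$ case, your claim that Lemma \ref{le2} ``forces two further negative walks'' is only valid when the two existing negative walks on the edge have the \emph{same} orientation (both midpoints joined positively to one endpoint and negatively to the other); if they come one of each orientation, Lemma \ref{le2} is already satisfied and nothing more is forced, so you must exhibit the orientations explicitly, as the paper does edge by edge. And your $(0,0)$ subcase for $b=0$ is not yet an argument: equation \eqref{6-0-abcn} with $(a,b)=(1,0)$ gives $c(n-7)=-9$, so $n=16$ is admissible, and once negative edges have no common neighbours one only gets $n\geq 12$; hence ``incompatible with $G^-$ being $3$-regular on a small vertex set'' has no force. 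The paper instead closes this case with a purely local contradiction on the positive side: every positive edge has exactly one common neighbour, necessarily a $+P_3$ midpoint, so the three positive neighbours $v_j,v_k,v_l$ of a vertex $v_i$ satisfy $v_j\sim v_k$ and $v_l\nsim v_j,v_k$, leaving the positive edge $v_iv_l$ with no positive walk of length $2$ at all.
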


\begin{proof}
  By the proof of Lemma \ref{6-0-21.11}, there must be one positive walk of length 2 between two positively adjacent vertices if $a=1$. Let $\sigma$ be the sign function of $\dot{G}$ and $v_iv_j$ be an edge of  $\dot{G}$.

  For $b=0$,  there are two cases for $v_i$ and $v_j$ if $\sigma(v_iv_j)=-1$: $v_i,v_j$ don't have common neighbours; or  there are two positive and two negative walks of length 2 between $v_i$ and $v_j$. For the latter case, there must be two positively adjacent vertices with one negative walk of length 2  between them. This contradicts  $a=1$. Therefore, two negatively adjacent vertices  don't have common neighbours. Suppose that $\sigma(v_iv_j)=1$, then they have one common neighbour, denoted by $v_k$, and $v_k\overset{+} {\sim}v_i,v_j $ by $b=0$. Let $v_l$ be the remaining positive neighbour of  $v_i$, then there is no  positive walk of length 2 between  $v_l$ and  $v_i$ since $v_l\nsim v_j,v_k$. A contradiction.

  If $b=-1$, then there are two negative walks and one positive walk of length 2 between two negatively adjacent vertices.  Suppose that $\sigma(v_iv_j)=-1$ and $N(v_i)\bigcap N(v_j)=\{v_k,v_l,v_m\}$. Then we can suppose $v_k\overset{-} {\sim}v_i,v_j$,  $v_l\overset{+} {\sim}v_i$, $v_l\overset{-} {\sim}v_j$, $v_m\overset{-} {\sim}v_i$, $v_m\overset{+} {\sim}v_j$ and $v_s, v_t\in N^+(v_i)$. We consider the negative edge $v_iv_m$, then we have  $v_m\overset{-} {\sim}v_l$ or  $v_m$ is negatively adjacent to one of $v_s$ and $v_t$. If  $v_m\overset{-} {\sim}v_l$, there are two  positive walks of length 2 between $v_i$ and $v_l$. This is  contradictory to  $a=1$. If $v_m$ is negatively adjacent to one of $v_s$ and $v_t$, suppose $v_s$, then we have $v_m\overset{-} {\sim}v_k$ or $v_m\overset{+} {\sim}v_t$ or $v_m\overset{+} {\sim}v_l$. If $v_m\overset{-} {\sim}v_k$,  there are two  positive walks of length 2 between $v_i$ and $v_k$. If $v_m\overset{+} {\sim}v_t$ or $v_m\overset{+} {\sim}v_l$, there is  one negative walk of length 2 between $v_i$ and $v_t$ or $v_l$. They are contradictory to $(a,b)=(1,-1)$.
\end{proof}

 If $(a,b)=(0,-1)$, we have $c(n-7)=-3$ by equation \eqref{6-0-abcn}. So the possible parameters sets are $(10,6,0,-1,-1)$ and $(8,6,0,-1,-3)$. Since $(a,b)=(0,-1)$, two adjacent vertices in $G$ have zero or three or four common neighbours. For $c=-1$, any two non-adjacent vertices in $G$ have three common neighbours. All 6-regular graphs with order 10 (Figure \ref{10-6}) cannot satisfy these conditions. And $c=-3$ implies that any two non-adjacent vertices in $G$ have five common neighbours. There is only one  6-regular graphs with order 8, i.e. $G_8$. It is obvious that two non-adjacent vertices in $G_8$ have six common neighbours. So there is no such SRSG.

\begin{lemma}\label{6-0-ab-1}
  If $\dot{G}\in \mathcal{C}_1\bigcup\mathcal{C}_4\bigcup\mathcal{C}_5$ is a connected non-complete 6-regular and 0 net-regular SRSG, then $(a,b)\neq (-1,-1)$.
\end{lemma}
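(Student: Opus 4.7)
I plan a short parity argument inside the positive neighbourhood of an arbitrary vertex. Suppose for contradiction that $(a,b)=(-1,-1)$. The first step is to pin down the local structure of a positive edge: writing $a=p_+-p_-$ for the counts of positive and negative length-$2$ walks between its two endpoints, Lemma~\ref{le2} forces $p_-$ to be even, while $p_++p_-\le r-1=5$. Hence the only feasible pair is $(p_+,p_-)=(1,2)$, so every positive edge has exactly three common neighbours, arranged as one positive walk and two negative walks of length $2$.

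Next I would fix a vertex $v_1$ and use net-regularity to write $N^+(v_1)=\{p_1,p_2,p_3\}$ and $N^-(v_1)=\{n_1,n_2,n_3\}$, which is forced by $r=6$ together with net-degree $0$. For each positive edge $v_1p_i$ ($i=1,2,3$), Lemma~\ref{le2} further splits the two negative length-$2$ walks between $v_1$ and $p_i$ into one through some $w$ with $\sigma(v_1w)=+1,\;\sigma(p_iw)=-1$ and one through some $w'$ with $\sigma(v_1w')=-1,\;\sigma(p_iw')=+1$. The first walk forces $w\in N^+(v_1)\setminus\{p_i\}$ with $p_i\overset{-}{\sim}w$, so each $p_i$ is incident to exactly one negative edge inside $\{p_1,p_2,p_3\}$.

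Finally, these negative edges inside $N^+(v_1)$ form a $1$-regular subgraph on the three vertices $p_1,p_2,p_3$, which is impossible because every $1$-regular graph has even order. This yields the desired contradiction. The only nontrivial step is the opening observation that $a=-1$ pins down $(p_+,p_-)$ uniquely; once that is in hand, the parity obstruction is immediate, so I do not anticipate any further obstacle. (A fully symmetric argument using $b=-1$ and the negative neighbourhood $N^-(v_1)$ would do the job equally well.)
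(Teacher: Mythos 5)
Your argument is correct, and it takes a genuinely different (and shorter) route than the paper. The paper's proof fixes a positive edge $v_iv_j$, invokes item (7) of Lemma \ref{6-0} to force the unique positive walk of length $2$ between $v_i$ and $v_j$ to be a $-P_3$, and then eliminates, one by one, the possible sign patterns on the edges among $v_l,v_m,v_k,v_s,v_t$, each branch ending in a forbidden walk count or a violation of Lemma \ref{le2}. You instead use only two facts: first, that $a=-1$, the evenness of the negative walk count from Lemma \ref{le2}, and the bound $p_++p_-\le r-1=5$ pin down the walk profile of every positive edge to $(p_+,p_-)=(1,2)$; second, that the ``moreover'' clause of Lemma \ref{le2} with $k=1$ assigns to each $p_i\in N^+(v_1)$ exactly one $w\in N^+(v_1)\setminus\{p_i\}$ with $p_i\overset{-}{\sim}w$ --- exactly, not merely at least, because every $p_j$ adjacent to $p_i$ is automatically a common neighbour of $v_1$ and $p_i$ lying in $N^+(v_1)$, so the count of negatively joined ones is precisely the $k$ of the lemma. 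The negative edges inside the three-element set $N^+(v_1)$ would then form a $1$-regular graph on an odd number of vertices, which the handshake lemma forbids. This buys a case-free proof that does not need Lemma \ref{6-0}(7) at all, and the parity device is one the paper itself uses elsewhere (e.g.\ the pairing obstruction in the $a<0$ case of Lemma \ref{triangle-1}), so it fits the paper's toolkit; the paper's longer case analysis yields no extra structural information here. Your symmetric remark about running the same argument on $N^-(v_1)$ with $b=-1$ is also valid.
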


\begin{proof}
  We suppose that $(a,b)=(-1,-1)$. Then there are two negative walks and one positive walk of length 2 between any two adjacent vertices. Let $v_iv_j$ be a positive edge and $N(v_i)\bigcap N(v_j)=\{v_k,v_l,v_m\}$ and $v_s,v_t$ be the remaining two neighbours of $v_i$. By the item (7) of Lemma \ref{6-0}, the positive walk of length 2 between  $v_i$ and $v_j$ must be $-P_3$. So we suppose $v_k\overset{+} {\sim}v_i, v_k\overset{-} {\sim}v_j$, $v_l\overset{-} {\sim}v_i, v_j$, $v_m\overset{-} {\sim}v_i, v_m\overset{+} {\sim}v_j$.

  For vertices $v_s,v_t$, we suppose $v_s\overset{+} {\sim}v_i,v_t\overset{-} {\sim}v_i$. Then we get that  $v_l\overset{+} {\sim}v_m$ or $v_l\overset{+} {\sim}v_t$ by the negative edge $v_iv_l$.

  If $v_l\overset{+} {\sim}v_m$, we have $v_m\overset{-} {\sim}v_k$ or  $v_m\overset{-} {\sim}v_s$ for the negative edge $v_iv_m$. In either case, $v_m\nsim v_t$. So it must have $v_t\overset{+} {\sim}v_l$ to satisfy $b=-1$ for the negative edge $v_iv_t$. But there will be three negative walks of length 2 between $v_i$ and $v_l$. A contradiction.

  If $v_l\overset{+} {\sim}v_t$, then we have $v_l\overset{-} {\sim}v_m$ or  $v_l\overset{+} {\sim}v_k$ or  $v_l\overset{+} {\sim}v_s$ for the negative edge $v_iv_l$. In fact, $v_l\overset{-} {\sim}v_m$ will lead to two  positive walks of length 2 between $v_i$ and $v_m$. A contradiction.  For the last two cases, we have  $v_t\overset{+} {\sim}v_m$ to satisfy  $b=-1$ for the negative edge $v_iv_m$ since  $v_l\nsim v_m$. But the situation of  two negative walks of length 2 between $v_i$ and $v_t$ is contradictory to Lemma \ref{le2}.

  Therefore, $(a,b)\neq (-1,-1)$.
\end{proof}

For the case of $(a,b)=(0,0)$, we have $c(n-7)=-6$ by equation \eqref{6-0-abcn}. Then the possible parameters are: $(13,6,0,0,-1)$,  $(10,6,0,0,-2)$, $(9,6,0,0,-3)$, $(8,6,0,0,-6)$.

We consider that any two adjacent vertices don't have common neighbours firstly. Let $v_iv_j$ be an edge of $\dot{G}$. Since $a=b=0$ and $r=6$, we have $n\geq12$. Suppose $v_iv_j$ is positive and $v_k\overset{+}{\sim}v_i$. If $c=-1$, then $v_k$ has three common neighbours with $v_j$. Then $v_k$ is just adjacent to two neighbours of $v_j$. Since $a=b=0$, $v_k$ is not adjacent to other neighbours of $v_i$. Then  $n>13$. All the parameters sets are not true.

Next, we concern the cases of  $\dot{G}$ contains unbalanced triangles and   $\dot{G}$ contains only balanced triangles, respectively. There are two kinds of unbalanced triangles and we give a result about the  first type of unbalanced triangles as follows.

 \begin{lemma}\label{6-0-a0b0}
  Let $\dot{G}\in \mathcal{C}_1\bigcup\mathcal{C}_4\bigcup\mathcal{C}_5$ be a connected non-complete 6-regular and 0 net-regular SRSG. If $a=b=0$ and it contains an unbalanced triangle of the first type, then  $G$ is isomorphic to $G_8$ in Figure \ref{8-6.864-4-6}.
\end{lemma}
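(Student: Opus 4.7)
The plan is to fix an unbalanced triangle of the first type inside $\dot{G}$ and reconstruct the underlying graph from its common-neighbour structure, while systematically eliminating the other possible values of $n$. The parameter equation $c(n-7)=-6$, together with the parity constraint that $n$ is even (forced by the $3$-regularity of both $G^+$ and $G^-$), already reduces the admissible list $\{8,9,10,13\}$ to $\{8,10\}$.

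I would then take $v_1\overset{+}{\sim}v_2$, $v_2\overset{+}{\sim}v_3$, $v_1\overset{-}{\sim}v_3$. Each edge of the triangle receives an automatic walk of length $2$ through the third vertex whose sign is determined by the two edge signs; since the number of negative walks is even by Lemma \ref{le2}, every edge of the triangle must have a second common neighbour of the complementary sign pattern. The assumption $a=b=0$ then equalises positive and negative walks, so every edge of the triangle has exactly four common neighbours. Introducing $X=N(v_1)\cap N(v_2)\cap N(v_3)$ and letting $Y_{ij}$ denote the private pair-intersection (common to $v_i,v_j$ but not $v_k$), a simple count gives $|X|+|Y_{ij}|=3$ for each pair together with an outer set $Z_i\subset N(v_i)$ of size $|X|-2$, forcing $|X|\in\{2,3\}$.

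If $|X|=2$ the triangle together with $X$, $Y_{12}$, $Y_{13}$, $Y_{23}$ accounts for exactly eight distinct vertices, and a direct check of the degree $6$ constraint shows that each vertex has a unique non-neighbour, identifying $G$ with the cocktail-party graph $K_{2,2,2,2}=G_8$. If $|X|=3$ the triangle structure already contains nine distinct vertices; to reach $n=10$ one must add a tenth vertex $w$, and the degree constraint then forces $w$ to be adjacent to every vertex of $X\cup Z_1\cup Z_2\cup Z_3$, which gives $|N(w)\cap N(v_1)|=4$. A non-adjacent pair with $c=-2$ and total common neighbours equal to $4$ would require $(n_+,n_-)=(1,3)$, contradicting the parity of negative walks from Lemma \ref{le2}. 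The same obstruction rules out $n=10$ in the $|X|=2$ case via the two extra vertices, leaving only $n=8$ and $G\cong G_8$. I expect the main difficulty to be exactly this bookkeeping for the ``extra'' vertices: confirming that in every $n=10$ scenario the forced degree-$6$ adjacencies produce the offending four-common-neighbour count against some triangle vertex, which Lemma \ref{le2} rules out under $c=-2$.
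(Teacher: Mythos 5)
Your argument is correct, but it reaches the conclusion by a genuinely different route from the paper. The paper's proof never touches the parameter equation: it fixes one edge of the unbalanced triangle, pins down the sign pattern of its four common neighbours via Lemma \ref{le2}, shows the two private neighbours $v_{t_1},v_{t_2}$ must be adjacent to all four of them, and then closes up the remaining adjacencies by a case analysis until all eight named vertices have degree $6$, whence $n=8$ and $G\cong G_8$ by connectedness. You instead first cut the candidate orders from $\{8,9,10,13\}$ down to $\{8,10\}$ using \eqref{6-0-abcn} and the handshake parity of the $3$-regular spanning subgraph $G^{+}$, then run the triple-intersection count ($|X|+|Y_{ij}|=3$, $|Z_i|=|X|-2$, hence $|X|\in\{2,3\}$) and kill $n=10$ in both subcases by exhibiting a non-adjacent pair with exactly four common neighbours, which with $c=-2$ would force the split $(1,3)$ and contradict the evenness of negative $2$-walks in Lemma \ref{le2}. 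I verified the two counting steps you deferred: for $|X|=3$ the nine vertices $\{v_1,v_2,v_3\}\cup X\cup Z_1\cup Z_2\cup Z_3$ are pairwise distinct and the tenth vertex is forced onto all of $X\cup Z_1\cup Z_2\cup Z_3$, giving $|N(w)\cap N(v_1)|=|X\cup Z_1|=4$; for $|X|=2$ each of the two extra vertices is forced onto all of $X\cup Y_{12}\cup Y_{13}\cup Y_{23}$ and the other extra vertex, meeting $N(v_1)$ in exactly the four vertices of $X\cup Y_{12}\cup Y_{13}$. Both obstructions go through, and once $n=8$ is forced the identification with $G_8$ is automatic because the complement of a $6$-regular graph on $8$ vertices is a perfect matching. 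Your version is shorter and avoids the paper's sign-by-sign reconstruction, at the cost of invoking the global parameters $n$ and $c$, which the paper's purely local argument does not need.
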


\begin{proof}
  Let $v_iv_jv_k$ be an unbalanced triangle of the first type in $\dot{G}$ with the negative edge $v_jv_k$. If $a=b=0$, then $v_i,v_j$ have four common neighbours. Suppose that $N(v_i)=\{v_j,v_k,v_l,v_m,v_s, v_{t_1}\}$ and $N(v_j)=\{v_i,v_k,v_l,v_m,v_s, v_{t_2}\}$. And we suppose $v_l\overset{-} {\sim}v_i$ and $v_l\overset{+} {\sim}v_j$ by Lemma \ref{le2}.  There are two cases for $v_m,v_s$: One vertex is positively adjacent to $v_i,v_j$, the other one is negatively adjacent to $v_i,v_j$; Both $v_m$ and $v_s$ are negatively adjacent to $v_i,v_j$. In fact, the first case is not true. Suppose $v_m\overset{+} {\sim}v_i,v_j$ and  $v_s\overset{-} {\sim}v_i,v_j$. Then we consider the positive edge $v_iv_m$. Since $a=0$, it must have $v_m\overset{-} {\sim}v_k$. But we deduce that $a\leq -3$ from the positive edge $v_iv_k$ by Lemma \ref{le2}. A contradiction. So we have $v_m, v_s\overset{+} {\sim}v_i,v_j$.

  Now we claim that $v_{t_1}$ must have common neighbours with $v_i$. Otherwise, if $N(v_i)\bigcap N(v_{t_1})=\emptyset$, then  $v_{t_1}\nsim v_k,v_l,v_m,v_s$. We consider the negative edge $v_iv_l$, then it must have $v_l\overset{+} {\sim}v_k$ and $v_l$ is positively adjacent to one of $v_m,v_s$ and negatively adjacent to the remaining one. Suppose $v_l\overset{+} {\sim}v_m$ and $v_l\overset{-} {\sim}v_s$ without loss of generality. Then we consider the negative edge $v_iv_m$. It must have $v_m\overset{+} {\sim}v_k$ and $v_m\overset{-} {\sim}v_s$ since $v_m\nsim v_{t_1}$. But the negative degree of $v_s$ is 4 now. A contradiction. The case of $v_{t_2}$ is similar. Then it must have $v_{t_1},  v_{t_2}\sim v_k,v_l,v_m,v_s$ by $a=b=0$.

  Now we consider the  edge $v_iv_l$, there are three cases: Case 1. $v_l\sim v_k,v_m$;   Case 2. $v_l\sim v_k,v_s$;  Case 3. $v_l\sim v_m,v_s$.

 For Case 1, we consider the  edge $v_iv_k$. Then we have  $v_k\sim v_m$ or $v_k\sim v_s$. If $v_k\sim v_m$,  we have $d(v_k)=d(v_l)=d(v_m)=6$. But there are only two common neighbours for $v_i$, $v_s$.   A contradiction.  If $v_k\sim v_s$, we have $v_m\sim v_s$ by the edge $v_iv_m$. Now there are only three common neighbours for $v_k$ and $v_{t_1}$ and we have $d(v_k)=d(v_l)=d(v_m)=d(v_s)=d(v_i)=d(v_j)=6$. So it must have $v_{t_1}\sim  v_{t_2} $. In this case, we get that $G$ is isomorphic to $G_8$.

 The Case 2 and Case 3 are similar as Case 1, they also deduce that $G$ is isomorphic to $G_8$.

%

\end{proof}

\begin{figure}
  \centering
  \subfigure[$G_8$]{
  \includegraphics[width=5cm]{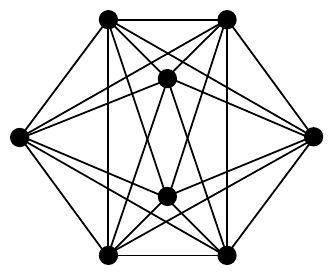}}
  \subfigure[$\dot{S}^4_8$]{
    \includegraphics[width=5cm]{864-4-6.pdf}}
  \caption{$G_8$ and $\dot{S}^4_8$}\label{8-6.864-4-6}
\end{figure}

 If $(a,b)=(0,0)$, we only need to consider the set of  $(8,6,0,0,-6)$ by the above Lemma.   Since $a=b=0$ and we consider the case of that there are two adjacent vertices which have common neighbours, then there are two positive and two negative walks of length 2 between these two adjacent vertices.  Let $v_1v_2$ be a positive edge and $N(v_1)\bigcap N(v_2)=\{v_3,v_4,v_5,v_6\}$ and $v_7,v_8$ be the remaining two neighbours of $v_1$ and $v_2$, respectively. Since $G$ is isomorphic to $G_8$, it must have $v_7,v_8\sim v_3,v_4,v_5,v_6$. And there are two cases for the walks between  $v_1$ and $v_2$:

 Case 1. $v_3\overset{+} {\sim}v_1,v_2$,  $v_4\overset{+} {\sim}v_1, v_4\overset{-} {\sim}v_2$, $v_5\overset{-} {\sim}v_1, v_5\overset{+} {\sim}v_2$, $v_6\overset{-} {\sim}v_1,v_2$;

  Case 2. $v_3, v_6\overset{-} {\sim}v_1,v_2$, $v_4\overset{+} {\sim}v_1, v_4\overset{-} {\sim}v_2$, $v_5\overset{-} {\sim}v_1, v_5\overset{+} {\sim}v_2$.

  For Case 1, we have $v_7\overset{-} {\sim}v_1,v_8\overset{-} {\sim}v_2$. We consider the negative edge  $v_1v_6$, then we have $v_5\overset{+} {\sim}v_6$ or $v_7\overset{+} {\sim}v_6$. If
 $v_5\overset{+} {\sim}v_6$, we consider the negative edge  $v_1v_5$. Then it must have $v_5\overset{-} {\sim}v_7$ since $v_7\sim v_5$. We continue consider the negative edge  $v_1v_7$, then we get that $v_7\overset{+} {\sim}v_6$. But this is contradictory to $b=0$ for the negative edge  $v_1v_6$. By the similar way, we get that $v_7\overset{-} {\sim}v_5$ and so $v_5\overset{+} {\sim}v_6$  if  $v_7\overset{+} {\sim}v_6$. Also a contradiction. For case 2, we have $v_7\overset{+} {\sim}v_1,v_8\overset{+} {\sim}v_2$. And we have $v_4\overset{-} {\sim}v_7$ for the positive edge  $v_1v_7$. But the negative walks of length 2 between $v_1$ and $v_4$ are contradictory to Lemma \ref{le2}.

 Therefore, $(a,b)\neq(0,0)$ in this case.

Now, we consider that $\dot{G}$ contains an unbalanced triangle of the second type. And we show that $b\neq0$ by the following Lemma.
\begin{lemma}
If  $\dot{G}\in \mathcal{C}_1\bigcup\mathcal{C}_4\bigcup\mathcal{C}_5$ is a connected non-complete 6-regular and 0 net-regular SRSG and it has an unbalanced triangle of the second type, then $b> 0$.
\end{lemma}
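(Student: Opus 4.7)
The plan is to assume for contradiction that $b\leq 0$ and rule out each of the three remaining values $b\in\{-4,-1,0\}$ (Lemma \ref{6-0} already excludes $b=-2,-3$ and forces $b\geq-4$). Fix the type-two unbalanced triangle $v_iv_jv_k$, so all three of its edges are negative, and note that the walk $v_i-v_k-v_j$ has sign $(-)(-)=+$. Letting $p,q$ denote the numbers of positive and negative walks of length~$2$ between $v_i$ and $v_j$, we have $p-q=b$, $p\geq 1$, $p+q\leq r-1=5$ (since $v_i$ and $v_j$ share at most $5$ common neighbours), and $q$ is even by Lemma \ref{le2}.

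For $b=-4$ these constraints give $2p\leq 1$, which is impossible. For $b\in\{-1,0\}$ the only feasible splits are $(p,q)=(1,2)$ and $(p,q)=(2,2)$, respectively. Moreover, when $b=0$ the subcase $p_-=2$ would force a fourth negative neighbour of $v_i$, violating $d^-(v_i)=3$, so $p_-=p_+=1$. Hence the common neighbours of the edge $v_iv_j$ are pinned down up to labelling: for $b=-1$ they are $v_k\,(-/-)$, $v_l\,(+/-)$, $v_n\,(-/+)$; for $b=0$ they are $v_k\,(-/-)$, $v_l\,(+/+)$, $v_m\,(+/-)$, $v_n\,(-/+)$. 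In particular $v_n$ is the unique negative neighbour of $v_i$ outside the triangle, and the analogous vertex of $v_j$ is $v_l$ when $b=-1$ and $v_m$ when $b=0$.

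I then apply the same structural analysis to the other two negative edges $v_iv_k$ and $v_jv_k$ of the triangle. Since each vertex has exactly one negative neighbour outside $\{v_i,v_j,v_k\}$, the $-/+$ common neighbour of each of these edges is uniquely determined, forcing $v_n\overset{+}{\sim}v_k$ in both cases and additionally $v_l\overset{+}{\sim}v_k$ when $b=-1$ (resp.\ $v_m\overset{+}{\sim}v_k$ when $b=0$).

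For $b=-1$ this closes the argument immediately: $v_n$ is then a common neighbour of $v_j$ and $v_k$ with $v_n\overset{+}{\sim}v_j$ and $v_n\overset{+}{\sim}v_k$, i.e.\ a $+/+$ common neighbour, contradicting $p_+=0$ for $(v_j,v_k)$. For $b=0$ the closure uses the auxiliary negative edge $v_jv_m$: both $v_i$ and $v_k$ are common neighbours and each gives a walk $(-)(+)$, so there are already two $-/+$ negative walks. Lemma \ref{le2} then demands two $+/-$ common neighbours as well, so $q\geq 4$, and combined with $b=0$ this yields $p=q\geq 4$ and hence at least $8$ common neighbours, exceeding the bound of $5$. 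The main obstacle is this $b=0$ case, where the triangle-edge analysis alone does not suffice and one must spot the auxiliary edge $v_jv_m$ to unlock a second application of Lemma \ref{le2}.
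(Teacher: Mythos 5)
Your proof is correct and follows essentially the same route as the paper: both arguments apply Lemma \ref{le2} to the negative edges around the all-negative triangle to pin down the common-neighbour pattern, then locate an auxiliary negative edge (your $v_jv_m$, the paper's $v_iv_m$) carrying two negative walks of the same orientation, which forces at least four negative walks and an impossible value of $b$. The only differences are cosmetic: you dispose of $b=-4$ by an explicit parity count and close the $b=-1$ case via a surplus positive walk on $v_jv_k$, whereas the paper handles $b=-1$ and $b=0$ uniformly.
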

\begin{proof}
  Let $v_iv_jv_k$ be an unbalanced triangle of the second type in $\dot{G}$. Then $v_i\overset{-} {\sim}v_j,v_k$ and $v_j\overset{-} {\sim}v_k$. There is one positive walk of length 2 between $v_i$ and $v_j$. Then we can get  $b\geq -1$ directly by Lemma \ref{le2}. If  $b= -1$ or 0, then there must be two negative walks of length 2 between $v_i$ and $v_j$. So we can suppose that $v_l$ and $v_m$ are the common neighbours of $v_i,v_j$ and $v_l\overset{+} {\sim}v_i$, $v_l\overset{-} {\sim}v_j$, $v_m\overset{-} {\sim}v_i$ and $v_m\overset{+} {\sim}v_j$. For the negative edge $v_iv_k$, there must be two negative walks of length 2 between $v_i$ and $v_k$. So it must have $v_m\overset{+} {\sim}v_k$. But there will be four negative walks of length 2 between $v_i$ and $v_m$ by Lemma \ref{le2}, which leads to  $b\leq -3$.  A contradiction. Therefore, $b>0$.

\end{proof}

Finally, we consider  that $\dot{G}$ only contains balanced triangles. Then  there are only positive walks of length 2 between any two positively adjacent vertices and only negative walks of length 2 between any two negatively adjacent vertices. Then $a=b=0$ is impossible.

%
%
%
%
%
%

The following conclusions can be drawn by summarizing the contents of this subsection.
\begin{theorem}
  There are four connected 6-regular and 0 net-regular SRSGs: $\pm \dot{S}^4_{8}$ and $\pm \dot{S}_{16}$.
\end{theorem}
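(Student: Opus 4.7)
The theorem is a summary statement collecting the results of Subsection 3.3, so my plan is primarily bookkeeping: verify that every candidate parameter pair $(a,b)$ has been handled and that no connected non-complete 6-regular, 0 net-regular SRSG other than $\dot{S}^4_8$ and $\dot{S}_{16}$ survives. The plan is to organize the argument around the restriction to $a\ge b$ (justified by Lemma \ref{le1}, since the negation interchanges $a$ and $b$ and preserves 0 net-regularity), then narrow the admissible $(a,b)$ using the seven constraints collected in Lemma \ref{6-0}, and finally appeal to the case-by-case lemmas to either construct a signed graph or derive a contradiction.

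The first step is to eliminate the homogeneous and other trivial classes. The opening discussion of the subsection already handles $\dot{G}\in\mathcal{C}_2$ (the eigenvalue 0 is incompatible with $A^2+bA-6I=0$), the complete class $\mathcal{C}_3$ (no 3-regular graph on 7 vertices), and the complete case in $\mathcal{C}_1$ (same reason). So only non-complete $\dot{G}\in\mathcal{C}_1\cup\mathcal{C}_4\cup\mathcal{C}_5$ remain, and Lemma \ref{6-0} reduces the admissible $(a,b)$ with $a\ge b$ to the explicit list $(4,-4),(3,1),(3,0),(3,-4),(2,2),(2,1),(2,0),(2,-4),(1,1),(1,0),(1,-1),(1,-4),(0,0),(0,-1),(0,-4),(-1,-1)$.

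Next I would run through this list using the lemmas of the subsection: $(4,-4)$ produces $\dot{S}^4_8$ with parameters $(8,6,4,-4,-6)$; $(2,2)$ produces $\dot{S}_{16}$ with parameters $(16,6,2,2,-2)$ (Lemma \ref{6-0-22}); $(3,1)$ and $(3,0)$ are killed by Lemma \ref{6-0-31}; $(2,1)$ and $(1,1)$ by Lemma \ref{6-0-21.11}; $(2,-4),(1,-4),(0,-4)$ by Lemma \ref{6-0-ab-1}; $(-1,-1)$ by the later lemma with the same label; $(3,-4),(2,0),(1,0),(1,-1),(0,-1)$ by the direct arguments above involving \eqref{6-0-abcn}, the list of small 6-regular graphs, and Lemma \ref{le2}; and $(0,0)$ by the three-part case analysis (no triangles; unbalanced triangles of the first type reduced via Lemma \ref{6-0-a0b0} to $G=G_8$; unbalanced triangles of the second type forcing $b>0$; only balanced triangles forcing $a+b\ne 0$). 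Putting these together yields exactly $\dot{S}^4_8$ and $\dot{S}_{16}$ up to the $a\leftrightarrow b$ symmetry, and applying Lemma \ref{le1} produces their negations $-\dot{S}^4_8$ and $-\dot{S}_{16}$ as the remaining SRSGs.

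The main obstacle in this summary proof is the $(0,0)$ case, since it does not collapse to one lemma but branches into four sub-situations (no common neighbours among adjacent vertices; first-type unbalanced triangles; second-type unbalanced triangles; all balanced triangles), each of which must be closed separately, and the second-type/no-triangle branches demand a careful check against the explicit list of 6-regular graphs of orders 8, 9, 10. Once the bookkeeping of the $(0,0)$ case is handled, the rest is a routine concatenation of the previously proved lemmas, together with the symmetry $\dot{G}\leftrightarrow -\dot{G}$ that doubles the two surviving graphs into the four listed in the theorem.
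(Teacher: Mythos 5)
Your proposal takes exactly the same route as the paper: the paper offers no separate argument for this theorem beyond ``summarizing the contents of this subsection,'' and your bookkeeping --- restrict to $a\ge b$ via Lemma \ref{le1}, prune $(a,b)$ with Lemma \ref{6-0}, dispatch each surviving pair with the corresponding lemma or in-text argument, and keep only $(4,-4)\mapsto\dot{S}^4_8$ and $(2,2)\mapsto\dot{S}_{16}$ --- is precisely that summary, correctly assembled (including the four-way branching of the $(0,0)$ case).

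One point deserves flagging, though it is a gap you share with the paper rather than one you introduced: the final step ``applying Lemma \ref{le1} produces their negations as the remaining SRSGs'' silently assumes that $-\dot{S}^4_8$ and $-\dot{S}_{16}$ are not isomorphic to the originals. For $\dot{S}^4_8$ this is immediate, since negation swaps the parameters $a=4$ and $b=-4$ and isomorphic SRSGs share parameters. But $\dot{S}_{16}$ has $a=b=2$, so the parameters do not separate it from its negation --- and in fact they cannot be separated: the construction in Lemma \ref{6-0-22} realizes $\dot{S}_{16}$ as the $4\times4$ rook's graph with the four positive $K_4$'s as rows and the four negative $K_4$'s as columns, and the transpose map of the array is an automorphism of the underlying graph carrying the positive edge set onto the negative edge set, i.e.\ an isomorphism $\dot{S}_{16}\to-\dot{S}_{16}$. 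So the list $\pm\dot{S}^4_8,\pm\dot{S}_{16}$ contains only three pairwise non-isomorphic signed graphs, and any proof of the theorem as stated would need either to exhibit a genuine distinction between $\dot{S}_{16}$ and $-\dot{S}_{16}$ (which does not exist) or to restate the count. If you reproduce the paper's conclusion verbatim, you should at least add the parameter argument for $\dot{S}^4_8$ and acknowledge that the $\dot{S}_{16}$ pair collapses.
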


\vspace*{2mm}

\textbf{ Declaration of competing interest}

The authors declare that they have no conflict of interests.

\vskip 0.6 true cm
{\textbf{Acknowledgments}}

 This research is supported by the National Natural Science Foundation of China (Nos. 12331012,11971164).
\baselineskip=0.25in

\linespread{1.10}

\begin{appendices}
\section{}

\begin{figure}[h]
  \centering
  \includegraphics[width=12cm]{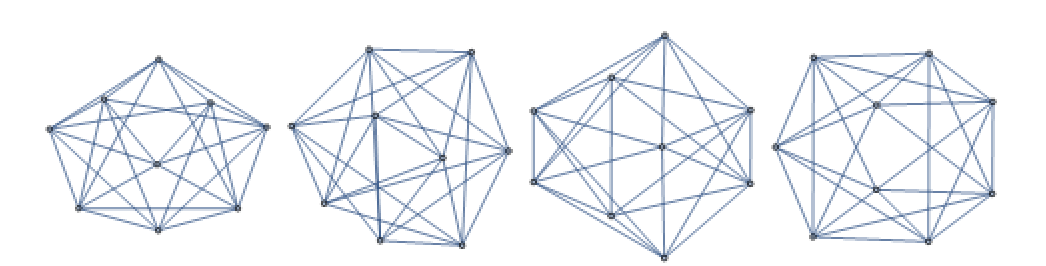}
  \caption{All 6-regular graphs with order 9}\label{9-6}
\end{figure}

\begin{figure}[h]
  \centering
  \includegraphics[width=12cm]{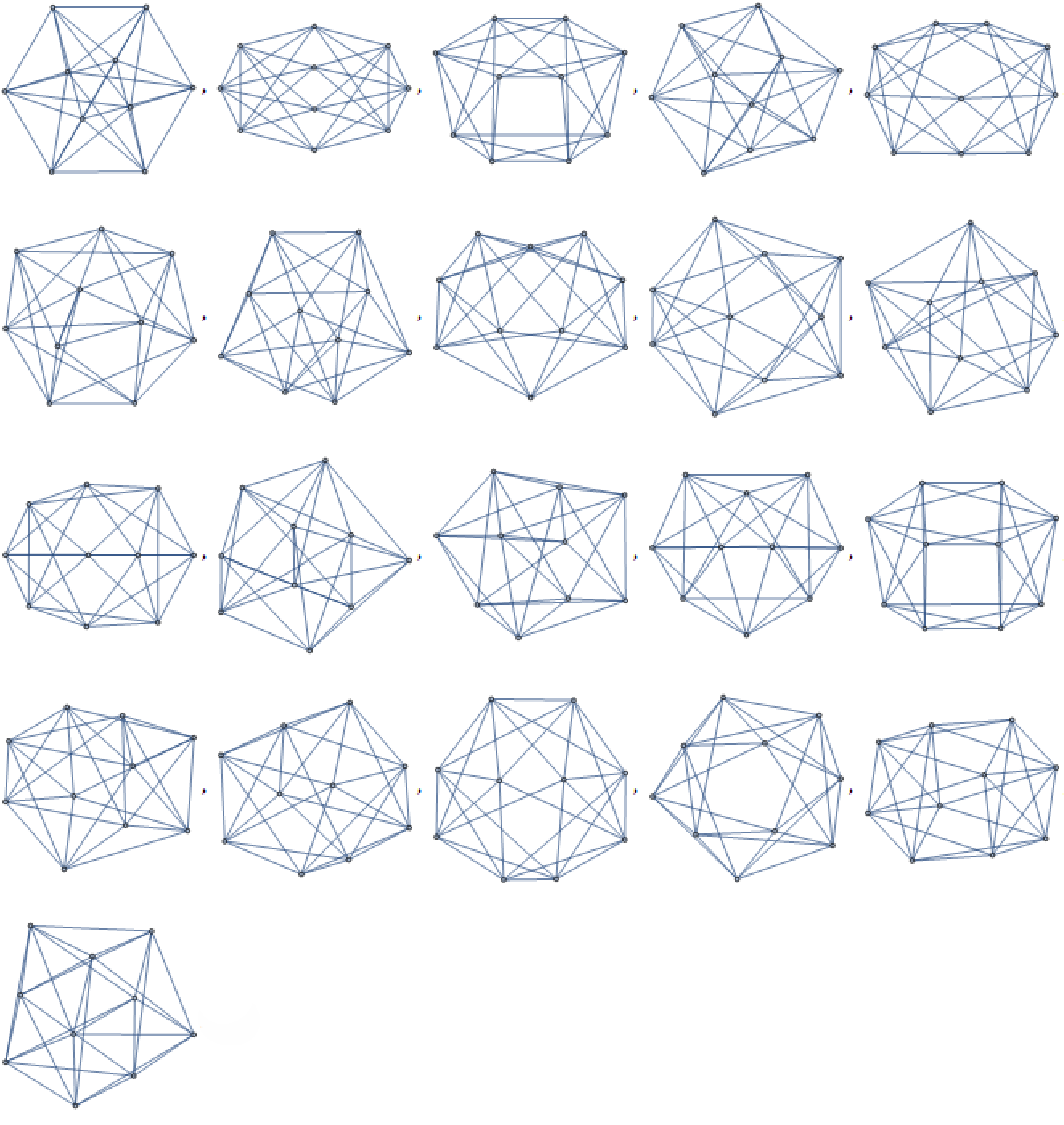}
  \caption{All 6-regular graphs with order 10}\label{10-6}
\end{figure}

\end{appendices}


\begin{thebibliography}{99}
\bibitem{AKS} M. An\dj eli\'{c}, T. Koledin, Z. Stani\'{c}, Relationships between net regularity, strong regularity and  walk regularity of signed graphs, Linear and Multilinear Algebra, 2024, 73(6), 1062–1077.

\bibitem{AKS1} M. An\dj eli\'{c}, T. Koledin, Z. Stani\'c, Cubic and quartic net-regular strongly regular signed graphs, Discrete Appl. Math., 366 (2025) 216--225.

\bibitem{CRS}  D. Cvetkovi\'c, P. Rowlinson, S. Simi\'c, An Introduction to the Theory of Graph Spectra, Cambridge University Press, 2010.

\bibitem{KS} T. Koledin, Z. Stani\'c, On a class of strongly regular signed graphs, Publ. Math. Debrecen, 97 (2020) 353--365.

\bibitem{M} M. Meringer, Fast generation of regular graphs and construction of cages, Journal of Graph Theory, 30 (1999) 137--146.

\bibitem{R} F. Ramezani, Constructing signed strongly regular graphs via star complement technique, Math. Sci., 12(2018) 157--161.


\bibitem{S1} Z. Stani\'c, On strongly regular signed graphs, Discrete Appl. Math., 271 (2019) 184--190.

\bibitem{YH} Q. Yu, Y. Hou, The net-regular strongly regular signed graphs with degree 5, arXiv:2507.23420v1.

\bibitem{Z} T. Zaslavsky, Matrices in the theory of signed simple graphs,  Proc. ICDM (2008), Ramanujan Mathematical Society Lecture Notes Series 13(2010).
\end{thebibliography}
\end{document}